\pgfqpoint{\LineSpace}{\LineSpace}}{\pgfqpoint{\LineSpace}{\LineSpace}}%
\newdimen\LineSpace
\tikzset{
  line space/.code={\LineSpace=#1},
  line space=3pt
}
\DeclareFontFamily{OMX}{lmex}{}
\DeclareFontShape{OMX}{lmex}{m}{n}{<->lmex10}{}
\newcommand{\enstq}[2]{\left\lbrace#1\mathrel{}\middle|\mathrel{}#2\right\rbrace} 
\newcommand{\ensemblenombre}[1]{\mathbb{#1}} 
\newcommand{\N}{\ensemblenombre{N}}
\newcommand{\R}{\ensemblenombre{R}}
\newcommand{\C}{\ensemblenombre{C}}
\newcommand{\T}{\ensemblenombre{T}}
\newcommand{\D}{\ensemblenombre{D}}
\newcommand{\Hol}{\operatorname{Hol}}
\newcommand{\Ran}{\operatorname{Ran}}
\newcommand{\dist}{\operatorname{dist}}
\newcommand{\abs}[1]{\left\lvert#1\right\rvert} 
\newcommand{\prodscal}[2]{\left\langle#1,#2\right\rangle} 
\newcommand{\bigint}{\@ifnextchar_\@bigintsub\@bigintnosub}
\def\@bigintsub_#1{\def\@int@subscript{#1}\@ifnextchar^\@bigintsubsup\@bigintsubnosup}
\def\@bigintsubsup^#1{\mathop{\text{\large$\int_{\text{\normalsize$\scriptstyle\@int@subscript$}}^{\text{\normalsize$\scriptstyle#1$}}$}}\nolimits}
\def\@bigintsubnosup{\mathop{\text{\large$\int_{\text{\normalsize$\scriptstyle\@int@subscript$}}$}}\nolimits}
\def\@bigintnosub{\@ifnextchar^\@bigintnosubsup\@bigintnosubnosup}
\def\@bigintnosubsup^#1{\mathop{\text{\large$\int^{\text{\normalsize$\scriptstyle#1$}}$}}\nolimits}
\def\@bigintnosubnosup{\mathop{\text{\large$\int$}}\nolimits}
\newtheorem{thm}{Theorem}[section]
\newtheorem{prop}[thm]{Proposition}
\newtheorem{cor}[thm]{Corollary }
\newtheorem{lem}[thm]{Lemma}
\theoremstyle{definition} 
\newtheorem{rmk}[thm]{Remark}
\newtheorem{ex}[thm]{Exemples}
\newcommand{\subjclass}[2][2010]{%
  \let\@oldtitle\@title%
  \gdef\@title{\@oldtitle\footnotetext{#1 \emph{Mathematics subject classification.} #2}}%
}
\newcommand{\keywords}[1]{%
  \let\@@oldtitle\@title%
  \gdef\@title{\@@oldtitle\footnotetext{\emph{Key words and phrases.} #1.}}%
}
\newcounter{qu}
\newcounter{qq}
\newcounter{ex}
\title{Separation of singularities for the Bergman space and application to control theory}
\author{Andreas \bsc{Hartmann}\thanks{This research was partially supported by the project ANR-18-CE40-005 and by the Joint French-Russian Research Project PRC CNRS/RFBR 2017--2019.} \footnote{Université de Bordeaux, CNRS, Bordeaux INP, IMB, UMR 5251, 351 Cours de la Libération, F-33400, Talence, France, andreas.hartmann@math.u-bordeaux.fr}
\and Marcu-Antone \bsc{Orsoni}\footnotemark[1] \footnote{Université de Bordeaux, CNRS, Bordeaux INP, IMB, UMR 5251, 351 Cours de la Libération, F-33400, Talence, France, marcu-antone.orsoni@math.u-bordeaux.fr}}
\date{\today}
\keywords{Bergman space, Separation of singularities, Cousin problem, Reachable space, Heat equation, Boundary control, $\overline{\partial}$-equation}
\subjclass{30H20, 35K05, 93B03}
\makeatletter \@removefromreset{figure}{subsection}\makeatother
\makeatletter \@removefromreset{figure}{section}\makeatother
\begin{document}

\maketitle

%
\abstract{In this paper, we solve a separation of singularities problem in the Bergman space. More precisely, we show that if $\mathcal{P} \subset \C$ is a convex polygon which is the intersection of  $n$ half planes,  
then the Bergman space on $\mathcal{P}$ decomposes into the sum of the Bergman spaces on these half planes. The result applies to the characterization of the reachable space of the one-dimensional heat equation on a finite interval with boundary controls. We prove that this space is a Bergman space of the square which has the given interval as a diagonal. This gives an affirmative answer to a conjecture raised in \cite{HKT}}.


\section{Introduction}
For $\Omega \subset \C$, an open set in the complex plane, we denote by $\mathrm{Hol}(\Omega)$ the space of holomorphic functions on $\Omega$. 
Given $\Omega_1$ and $\Omega_2$ two open subsets of $\C$ with non empty intersection, a natural question is to know whether every function $f \in \mathrm{Hol}(\Omega_1 \cap \Omega_2)$ can be written as a sum of two functions $f_1 \in  \mathrm{Hol}(\Omega_1)$ and $f_2 \in \mathrm{Hol}(\Omega_2)$, i.e.\ does the equality $\mathrm{Hol}(\Omega_1 \cap \Omega_2)=\mathrm{Hol}(\Omega_1) + \mathrm{Hol}(\Omega_2)$ hold ? This problem is known as the separation of singularities problem for holomorphic functions and has a quite long
history.
A simple example is given by $\Omega_2=\enstq{z \in \C}{\abs{z} < r_2}$ and $\Omega_1=\enstq{z \in \C}{\abs{z} > r_1}$ with $0 <r_1 <r_2$. Then $\Omega_1 \cap \Omega_2$ is a ring and the problem can be solved affirmatively using Laurent series. Poincaré \cite[V, Ch. 3, § 21]{Po} discussed the solution in the particular case when $\Omega_1=\C \setminus [-1,\, 1]$ and $\Omega_2 = \C \setminus \left((-\infty, \, -1] \cup [1, \, + \infty) \right)$, and Aronszajn \cite{Ar} gave a positive answer for any pair $(\Omega_1, \Omega_2)$ of open sets in $\C$. 

The separation of singularities problem is a special case of the First Cousin Problem 
which reduces the problem to solving a $\overline{\partial}$-equation and can be reformulated in sheaf cohomology terms (see \cite[Thm 1.4.5 and Thm 5.5.1]{Ho2}). The First Cousin problem has been solved a few years after Aronszajn, first by Oka \cite{Ok} on domains of holomorphy and then in the Cartan seminar \cite{Ca} on Stein manifolds. Today, it is well-known that the First Cousin Problem on $\C$ is equivalent to the Mittag-Leffler theorem (see \cite[pp. 11-14]{Ho2} and \cite[section 9.4]{AM}). 

We would also like to mention two other simple proofs of the separation of singularities problem. The first one, given by Havin \cite{Ha1} (see also \cite{Ai} or \cite{MK}), is based on a very beautiful duality argument. 
The second one, given by Müller and Wengenroth \cite[Theorem 1]{MW}, uses the open mapping theorem and Roth's fusion lemma, and establishes a link between this problem and approximation theory. 

A related question 
is to know whether there exists a bounded linear operator $T:\Hol(\Omega_1\cap\Omega_2)\to\Hol(\Omega_1)\times \Hol(\Omega_2)$, $f \mapsto (f_1, \, f_2)$, such that $f=f_1+f_2$. Mityagin and Khenkin \cite{MK} proved that such an operator does not always exist. 
\\

The problem has attracted a lot of interest in particular in Banach spaces of analytic functions. A challenging situation is the separation of singularities problem in the space $H^\infty$ of bounded analytic functions, which arises naturally in connection with interpolation problems \cite{Pol,PK}.
Havin, Nersessian \cite{HN}, Havin \cite{Ha2}, and Havin, Nersessian, Ortega-Cerdá \cite{HNO} solved it in several general configurations. Unlike the classical problem, they proved also that the problem has not a positive solution 
for arbitrary pairs of open sets, giving a lot of instructive counterexamples. The authors used an explicit Cauchy integral approach in the first two papers cited above and a reduction to the $\overline{\partial}$-equation (as in the modern solution of the classical problem) in the last paper. They constructed bounded linear separation operators explicitly in both cases.

Another interesting situation previously studied concerns Smirnov spaces. Assume that $\Omega$ is a simply connected domain in the complex plane with at least two boundary points. We say that $f\in \Hol(\Omega)$ belongs to the Smirnov space $E^p\left(\Omega \right)$ $(0 < p < + \infty)$ if there exists a sequence $(\gamma_n)_{n \in \N}$ of rectifiable Jordan curves eventually surrounding each compact subset of $\Omega$ such that 
$$
 \|f\|_{E^p}^p=\sup_{n \in \N} \int_{\gamma_n} |f(z)|^p |dz| < \infty.
$$ 
For $1\leq p < + \infty$, $E^p(\Omega)$ is a Banach space.
We mention that when the conformal mapping from the unit disk $\D$ to $\Omega$ has
bounded and invertibly bounded derivative, then the Smirnov space is isomorphic to the
corresponding Hardy space (see \cite[Theorem 10.2]{Du}).

Aizenberg \cite[Theorem 2]{Ai} solved the problem for the Smirnov space $E^p$ ($1<p<+\infty$) in the case of the intersection of $k$ bounded domains with regular boundaries (Ahlfors-regularity). The proof relies heavily on a strong result by David \cite{Da}, who studies the boundedness of the Hilbert transform on such regular curves. It can easily be generalized to finitely multiply connected domains using the same argument (see \cite[p. 182]{Du} for the definition of Smirnov space on a finitely connected domain). 
He also gave the same kind of result (with more regularity hypotheses) for the Hardy space in several complex variables (see \cite[Theorem 9]{Ai} for the definition of this space and the theorem). 

M\"{u}ller and Wengenroth \cite[Theorem 3]{MW} proved that solving the problem on the space $\mathcal{A}$ of holomorphic functions which are continuous up to the boundary is equivalent to prove a Roth fusion type lemma. In the same vein, Kaufman \cite[section 16.18]{HNi} asked for a solution to the problem on the space $\mathcal{A}^{(n)}$ of functions which have their first $n$ derivatives in $\mathcal{A}$. He mentionned that a positive answer for any $n \geq 1$ would provide information on the triviality of $\mathcal{A}^{(n)}(\Omega_1 \cap \Omega_2)$, where  triviality means $\mathcal{A}^{(n)}(\Omega_1 \cap \Omega_2) = \mathrm{Hol}(\C)_{|\Omega_1 \cap \Omega_2}$.
\\

The aim of this paper is two-fold. We  first discuss the problem of separation of singularities  for another prominent space of holomorphic functions, namely the Bergman space, and in particular on convex polygons (but not only). This is a very natural problem since, besides Hardy, Dirichlet and Fock spaces, the Bergman space is a central space in complex analysis bearing still a lof of challenging problems. The second aspect, and which was a central motiviation of this work, is that the Bergman space --- in particular on a square --- appears to be a keystone for identifying the reachable states of the 1-D heat equation on a finite interval with boundary controls. Characterizing these reachable states is a very prominent problem in control theory and has captivated a lot of research efforts since the groundbraking work of Fattorini and Russell in the early 70's \cite{FR}  and who showed that the reachable states in this setting extend to holomorphic functions on a square the diagonal of which is the interval on which the heat equation is controlled. More recently, an accelerating activity has taken place in order to better understand the exact nature of these holomorphic functions (see for instance\cite{Sc,MRR,DE}), culminating in the paper \cite{HKT} where it was shown that the reachable states are sandwiched between two well known Hilbert spaces of holomorphic functions: the Smirnov space (which is a companion space to the Hardy space) and the Bergman space on the square (more detailled information on this problem will be given below). It was also conjectured in that paper that the set of reachable states ``is not far of coinciding with'' this Bergman space \cite[Remark 1.3]{HKT}. Another key result in this connection was established very recently in \cite{Or} who showed that the reachable states are exactly given by the sum of two Bergman spaces on suitable sectors (see also \cite{KNT} for another proof given subsequently of this result), and which establishes the connection with the purely complex analytic problem of separation of singularities. 
In view of the result \cite{Or} mentioned above, our result applied in a very simple situation provides an affirmative answer to the conjecture raised in \cite[Remark 1.3]{HKT} and thus the definite characterization of the reachable states of the 1-D heat equation on a finite interval with boundary controls.

\subsection{Results on separation of singularities in Bergman spaces}

We will now turn to our first set of results concerning separation of singularities in Bergman spaces. Note that this problem is mentioned explicitely in \cite[p.17]{BKN} without an exact reference.

Let us begin with the definition and some well-known facts. Let $\omega$ be a non-negative mesurable function on $\Omega$. For $1 \leq p < \infty$, the weighted Bergman space $A^p\left(\Omega, \omega \right)$ consists of all functions $f \in \Hol(\Omega)$ such that $$
\|f\|_{A^p(\Omega,\omega)}^p:=\int_{\Omega} |f(x+iy)|^p \omega(x+iy)dxdy < +\infty. 
$$
When $\omega =1$, $A^p(\Omega,\omega)$ is the classical Bergman space which we simply denote by $A^p\left(\Omega \right)$.
When $p=2$, $A^2(\Omega)$ is a Reproducing Kernel Hilbert Space (RKHS) and we denote by $k_\lambda^\Omega$ its reproducing kernel, i.e. for any $f \in A^2(\Omega)$ and any $\lambda \in \Omega$, $f(\lambda)={\prodscal{f}{k_\lambda^\Omega}}_{L^2(\Omega)}$. Clearly, $A^2(\Omega)$ is a closed subspace of $L^2(\Omega)$, and the corresponding orthogonal projection, called \emph{Bergman projection}, is given by 
\begin{equation}
\label{proj}
\left(\mathrm{P}_{\Omega}f\right)(\lambda)= {\prodscal{f}{ k_\lambda^\Omega}}_{L^2(\Omega)}, \quad \lambda \in \Omega.
\end{equation}
The most prominent case is when $\Omega$ is the unit disk  in the complex plane $\D=\{z\in\C: |z|<1\}$ and $\omega=1$. Then for $\lambda\in\D$ (see e.g.\ \cite[Section 1.2]{DS}),
\begin{equation}\label{repkerD}
 k^{\D}_{\lambda}(z)=\frac{1}{\pi(1-\overline{\lambda}z)^2},\quad z\in\D.
\end{equation}

Also, if $\Omega_1$ and $\Omega_2$ are two open subsets of $\C$, and $\varphi : \Omega_1 \to \Omega_2$ is a conformal mapping, it is well-known (see \cite[Chapter VIII, Theorem 4.9, p.280]{QQ} or \cite[Chapter 1, § 1.3, Theorem 3]{DS}) that we have the following conformal invariance property 
\begin{equation}
\label{confmap}
k^{\Omega_1}_\lambda(z)= k^{\Omega_2}_{\varphi(\lambda)}(\varphi(z)) \varphi'(z) \overline{\varphi'(\lambda)} 
\end{equation}


The following weight will play a central r\^ole in our study: for  $N \in \N$, we write 
\begin{equation}
\label{weight}
\omega_N(z)= (1+\abs{z}^{2p})^{-N}.
\end{equation}

We start with a quasi separation of singularities theorem, in the sense that we have to add a weight with decay at infinity. It deals with general open sets $\Omega_1$ and $\Omega_2$ of $\C$ such that $\Omega_1 \setminus \Omega_2$ and $\Omega_2\setminus \Omega_1$ are far. Note that this condition already appears in \cite[Cor. 3.3]{HN} as an easy case for solving the separation of singularities problem in $H^{\infty}$.

\begin{thm}
\label{thmCousin}
Let $1 < p < \infty$.
Let $\Omega_1$ and $\Omega_2$ be open sets of $\C$ such that $\Omega_1 \cap \Omega_2 \neq \emptyset$. If $\dist(\Omega_1 \setminus \Omega_2,\  \Omega_2 \setminus \Omega_1) >0$, then we have $A^p(\Omega_1 \cap \Omega_2) \subset A^p(\Omega_1, \omega_1) + A^p(\Omega_2,\omega_1)$. 
\end{thm}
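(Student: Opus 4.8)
The plan is to run the classical reduction of a separation‑of‑singularities problem to a $\overline{\partial}$‑equation, using the positive‑distance hypothesis only to produce a smooth cutoff with bounded derivative, and then to solve that $\overline{\partial}$‑equation on all of $\C$ with a weighted $L^{p}$‑estimate; the weight $\omega_{1}$ and the restriction $p>1$ are exactly what make this estimate true.

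Write $\delta:=\dist(\Omega_{1}\setminus\Omega_{2},\Omega_{2}\setminus\Omega_{1})>0$ and $\Omega:=\Omega_{1}\cup\Omega_{2}$. First I would fix $\chi\in C^{\infty}(\C)$ with $0\le\chi\le1$, $\chi\equiv1$ on the $\delta/3$‑neighbourhood of $\Omega_{1}\setminus\Omega_{2}$, $\chi\equiv0$ on the $\delta/3$‑neighbourhood of $\Omega_{2}\setminus\Omega_{1}$ (disjoint because $\delta>0$), and $\|\overline{\partial}\chi\|_{\infty}\le C/\delta$; such a $\chi$ is obtained by mollifying $\phi\circ\dist(\cdot,\Omega_{1}\setminus\Omega_{2})$ for a suitable one‑variable cutoff $\phi$. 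Given $f\in A^{p}(\Omega_{1}\cap\Omega_{2})$, set $g_{1}:=(1-\chi)f$ on $\Omega_{1}\cap\Omega_{2}$ and $g_{1}:=0$ on $\Omega_{1}\setminus\Omega_{2}$, and symmetrically $g_{2}:=\chi f$, $g_{2}:=0$. Since $\chi\equiv1$ near $\Omega_{1}\setminus\Omega_{2}$, these glue to $g_{1}\in C^{\infty}(\Omega_{1})$ and $g_{2}\in C^{\infty}(\Omega_{2})$ with $g_{1}+g_{2}=f$ on $\Omega_{1}\cap\Omega_{2}$ and $|g_{j}|\le|f|$. Because $\overline{\partial}f=0$, the functions $\overline{\partial}g_{1}$ on $\Omega_{1}$ and $-\overline{\partial}g_{2}$ on $\Omega_{2}$ both equal $-(\overline{\partial}\chi)f$ on the overlap and vanish off it, so they define one $v\in C^{\infty}(\Omega)$ with $\supp v\subset\Omega_{1}\cap\Omega_{2}$ and $|v|\le\tfrac{C}{\delta}|f|$; extending $v$ by $0$ gives $v\in L^{p}(\C)$ with $\|v\|_{L^{p}(\C)}\le\tfrac{C}{\delta}\|f\|_{A^{p}(\Omega_{1}\cap\Omega_{2})}$.

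The heart of the matter is then to solve $\overline{\partial}u=v$ on $\C$ with $\|u\|_{L^{p}(\C,\omega_{1})}\le C_{p}\|v\|_{L^{p}(\C)}$. Granting this, $f_{1}:=g_{1}-u|_{\Omega_{1}}$ and $f_{2}:=g_{2}+u|_{\Omega_{2}}$ have vanishing $\overline{\partial}$, hence are holomorphic by Weyl's lemma; they lie in $A^{p}(\Omega_{1},\omega_{1})$ and $A^{p}(\Omega_{2},\omega_{1})$ respectively (since $g_{j}\in L^{p}\subset L^{p}(\omega_{1})$ as $\omega_{1}\le1$, and $u\in L^{p}(\C,\omega_{1})$), and $f_{1}+f_{2}=g_{1}+g_{2}=f$ on $\Omega_{1}\cap\Omega_{2}$, which is the assertion. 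To build $u$ I would avoid the bare Cauchy (Pompeiu) transform of $v$, because for $p\ge2$ with $v$ not compactly supported — which genuinely happens, e.g. when $\Omega_{1}$ and $\Omega_{2}$ are two half-planes with parallel boundary lines whose intersection is a strip, so that $\Omega=\C$ and $\supp v$ is an infinite strip — the integral $\int v(\zeta)/(\zeta-z)\,dA(\zeta)$ need not converge at infinity. Instead, split $v=v\mathbf{1}_{\overline{\D}}+v\mathbf{1}_{\{|\zeta|>1\}}$ and set $u=u_{\mathrm{in}}+u_{\mathrm{out}}$, where $u_{\mathrm{in}}$ is the ordinary Cauchy transform of $v\mathbf{1}_{\overline{\D}}$ (which is in $L^{p}_{\mathrm{loc}}$ and is $O(1/|z|)$ at infinity, hence in $L^{p}(\C,\omega_{1})$ with the desired bound by Young's inequality), and $u_{\mathrm{out}}(z):=\tfrac{z}{\pi}\int_{|\zeta|>1}\tfrac{v(\zeta)}{\zeta(\zeta-z)}\,dA(\zeta)$. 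The extra factor $1/\zeta$ makes the integrand $O(|v(\zeta)|/|\zeta|^{2})$ near $\zeta=\infty$, so the integral converges absolutely for every $p<\infty$; and since $1/\zeta$ is $\overline{\partial}_{z}$‑closed, truncating to $|\zeta|<R$ and letting $R\to\infty$ gives $\overline{\partial}u_{\mathrm{out}}=v\mathbf{1}_{\{|\zeta|>1\}}$. The weighted bound for $u_{\mathrm{out}}$ is then a Schur‑type estimate: split the defining integral according to $|\zeta-z|\lessgtr|z|/2$ and $|\zeta|\lessgtr2|z|$, apply Hölder in $\zeta$ to pull out a factor $|v(\zeta)|^{p}$, and integrate in $z$ first by Fubini; each resulting inner $z$‑integral is a bounded function of $\zeta$ — this is precisely where $\int_{\C}\omega_{1}\,dA<\infty$ and $\int^{\infty}r^{-p}\,dr<\infty$, i.e.\ $p>1$, are used — which yields $\|u_{\mathrm{out}}\|_{L^{p}(\C,\omega_{1})}\lesssim\|v\|_{L^{p}(\C)}$.

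The one genuinely substantial step is this last one: solving $\overline{\partial}u=v$ on $\C$ against the summable polynomial weight $\omega_{1}$ for data that is merely in $L^{p}(\C)$ with no compactness. It is precisely the non‑compactness of $\supp v$ together with the breakdown of the naive Cauchy transform for $p\ge2$ that force the regularized kernel and the somewhat delicate weighted bookkeeping; everything else — the cutoff, the smooth splitting $g_{1}+g_{2}=f$, and the return from $u$ to the holomorphic pieces — is routine, and the appearance of the weight and the exclusion of $p=1$ are structural rather than cosmetic.
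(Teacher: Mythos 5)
Your argument is correct, and its overall architecture --- the cutoff $\chi$ with bounded gradient furnished by the positive-distance hypothesis, the splitting $f=g_1+g_2$ with a common $\overline{\partial}$-datum $v$ supported in $\Omega_1\cap\Omega_2$ and dominated by $C\delta^{-1}|f|$, and the correction $f_1=g_1-u$, $f_2=g_2+u$ by a solution of $\overline{\partial}u=v$ lying in the weighted space --- is exactly the paper's. The genuine difference is in how the $\overline{\partial}$-equation is solved. The paper's Lemma \ref{Hormander} works on $\Omega:=\Omega_1\cup\Omega_2$, picks $z_0$ with $\dist(z_0,\Omega)>0$, divides the datum by $z-z_0$ to gain decay at infinity, and applies the plain Cauchy transform (Young's inequality near the diagonal, H\"older far from it); this is what forces the hypothesis $\overline{\Omega}\neq\C$ in that lemma. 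You instead solve on all of $\C$, using the once-regularized kernel $\frac{1}{\zeta-z}-\frac{1}{\zeta}=\frac{z}{\zeta(\zeta-z)}$ for the non-compactly supported part of $v$ and the plain Cauchy transform for the part carried by $\overline{\D}$. Both routes produce $u\in L^p(\cdot,\omega_1)$ precisely for $p>1$, for essentially the same reason (finiteness of the conjugate-exponent integrals, equivalently $\int_\C\omega_1\,dA<\infty$). Your variant buys real generality: it requires no point at positive distance from $\Omega_1\cup\Omega_2$, so it covers for instance the strip arising as the intersection of two half-planes with parallel boundaries, where $\Omega_1\cup\Omega_2=\C$ and the paper's lemma, as stated, does not literally apply for $p\neq 2$ --- a configuration the paper itself uses later (in the proof of Lemma \ref{lemmaunbounded}) and which the hypotheses of Theorem \ref{thmCousin} do not exclude. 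The only place where you assert rather than prove is the weighted Schur bound for $u_{\mathrm{out}}$; the claimed estimates are correct (one gets $|u_{\mathrm{out}}(z)|\lesssim |z|^{2/q-1}\|v\|_{L^p}$ for $p>2$, a bounded far-field contribution for $p<2$, and an $L^1$-convolution term near the diagonal, all of which integrate against $\omega_1$ exactly when $p>1$), but that computation is the technical heart of your version and should be written out in full.
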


The previous theorem is based on a reduction to the $\bar \partial$-equation, as in the modern solution of the classical problem (see \cite[Thm 1.4.5]{Ho2} or \cite[Thm 9.4.1] {AM}), and on Hörmander type $L^p$-estimates for the $\bar \partial$-equation. This method was already used in \cite[Thm 1.2]{Or} to prove another kind of weighted separation theorem (see Corollary \ref{reach} below for an improvement of this theorem). Using the fact that polynomials not vanishing on $\overline{\Omega}$ are invertible multipliers of the Bergman space on a bounded domain, 
will allow us to show our first ``real'' (i.e. unweighted) separation result for bounded intersections. 
\begin{cor}
\label{corCousin}
Under the same hypotheses as in Theorem \ref{thmCousin}, if in addition $\Omega_1 \cap \Omega_2$ is bounded and $\overline{\Omega_1 \cup \Omega_2} \neq \C$, then $A^p(\Omega_1 \cap \Omega_2)= A^p(\Omega_1) + A^p(\Omega_2)$. 
\end{cor}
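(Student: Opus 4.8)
The plan is to deduce Corollary \ref{corCousin} from Theorem \ref{thmCousin} by removing the weight $\omega_1(z) = (1+|z|^{2p})^{-1}$. The key observation is that on a \emph{bounded} domain the weighted Bergman space $A^p(\Omega, \omega_1)$ coincides (as a set of holomorphic functions) with the unweighted space $A^p(\Omega)$, since $\omega_1$ is bounded above and below by positive constants on any bounded set. The difficulty is that $\Omega_1$ and $\Omega_2$ themselves need not be bounded — only their intersection is. So the weight genuinely matters near infinity on each $\Omega_j$, and we must ``trade'' the decay of $\omega_1$ for a polynomial factor that we can then cancel.

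First I would fix $f \in A^p(\Omega_1 \cap \Omega_2)$ and apply Theorem \ref{thmCousin} to write $f = f_1 + f_2$ with $f_j \in A^p(\Omega_j, \omega_1)$. Next, using the hypothesis $\overline{\Omega_1 \cup \Omega_2} \neq \C$, pick a point $a \in \C \setminus \overline{\Omega_1 \cup \Omega_2}$ and consider the polynomial $q(z) = (z-a)^2$, or more precisely $q(z)^N$ for a suitable integer $N$ chosen so that $|q(z)|^{pN} \gtrsim 1 + |z|^{2p}$ for $|z|$ large (with $N = 1$, $|q(z)|^p = |z-a|^{2p} \asymp 1+|z|^{2p}$ at infinity, and it is bounded below away from $a$, hence on all of $\Omega_1 \cup \Omega_2$). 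Since $a \notin \overline{\Omega_j}$, the function $1/q$ is a bounded holomorphic multiplier on each $A^p(\Omega_j)$ (this is the remark about polynomials nonvanishing on $\overline{\Omega}$ being invertible multipliers, here used in the form that $1/q$ itself is bounded on $\Omega_j$). Then $g_j := f_j / q \in \Hol(\Omega_j)$ and
\[
\int_{\Omega_j} |g_j|^p \, dx\,dy = \int_{\Omega_j} \frac{|f_j|^p}{|q|^p} \, dx\,dy \lesssim \int_{\Omega_j} |f_j|^p \, \omega_1 \, dx\,dy < \infty,
\]
so $g_j \in A^p(\Omega_j)$, and $f = f_1 + f_2 = q\,(g_1 + g_2)$ on $\Omega_1 \cap \Omega_2$.

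It remains to conclude that $f$ itself, not merely $f/q$, lies in $A^p(\Omega_1) + A^p(\Omega_2)$. Here I would use boundedness of $\Omega_1 \cap \Omega_2$: on that set $|q|^p$ is bounded above by some constant $C$, so from $f = q\,g_1 + q\,g_2$ with $g_j \in A^p(\Omega_j)$ we want $q\, g_j$ to extend holomorphically to $\Omega_j$ and lie in $A^p(\Omega_j)$ — but that is immediate since $q$ is a polynomial (entire) and $g_j \in A^p(\Omega_j)$, and multiplication by $q$ maps $A^p(\Omega_j)$ into... not quite, because $q$ is unbounded on unbounded $\Omega_j$. So instead the cleaner route: write $h_j := q\, g_j = f_j$; we already know $f_j \in \Hol(\Omega_j)$ with $\int_{\Omega_j}|f_j|^p\omega_1 < \infty$. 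We have NOT gained anything yet — the point of introducing $q$ must be different. The correct use is: $f = f_1 + f_2$, and we modify the decomposition. Since $f \in A^p(\Omega_1 \cap \Omega_2)$ and $\Omega_1\cap\Omega_2$ is bounded, $f$ is automatically in $A^p(\Omega_1 \cap \Omega_2, \omega_1^{-1})$ too. I would instead apply the theorem not to $f$ but observe that the decomposition $f = q g_1 + q g_2$ suffices once we check $q g_j \in A^p(\Omega_j)$: split $\Omega_j = (\Omega_j \cap \{|z| \le R\}) \cup (\Omega_j \cap \{|z|>R\})$ where $R$ is large enough that $\Omega_1 \cap \Omega_2 \subset \{|z| \le R\}$; on the far piece $\Omega_j \setminus \{|z|\le R\}$ is contained in $\Omega_j \setminus \Omega_{3-j}$, where... \textbf{the main obstacle} is precisely this bookkeeping: one must show that after dividing by $q$ and re-multiplying, the pieces assemble correctly, and the cleanest implementation is to note that $q \cdot A^p(\Omega_j, \omega_1) \subset A^p(\Omega_j)$ because $|q|^p \omega_1 \lesssim 1$ uniformly on $\C$ (by the choice of $N$ making $|q|^{pN}(1+|z|^{2p})^{-1}$ bounded) — wait, that requires $|q|^{p} \le C(1+|z|^{2p})$, i.e. $|q|^p\omega_1 \le C$, which holds with $q(z)=(z-a)^2$. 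Thus directly $f_j = $ (bounded function)$\times$(something), no division needed: $\int_{\Omega_j}|f_j|^p\,dx\,dy$ need not be finite, but $\int_{\Omega_j} |f_j/q|^p \,dx\,dy \lesssim \int_{\Omega_j}|f_j|^p \omega_1 < \infty$, giving $g_j = f_j/q \in A^p(\Omega_j)$; and then $q g_j \in A^p(\Omega_j)$ fails on the unbounded part unless we note $qg_j = f_j$ and re-decompose $f = (f_1 - P) + (f_2 + P)$ for a polynomial $P$ — and here Corollary's extra hypothesis is used to guarantee $f_j + (\text{polynomial})$ lands in $A^p(\Omega_j)$, exploiting that the only obstruction to $f_j \in A^p(\Omega_j)$ is polynomial growth at infinity, which, on the unbounded complementary regions where $f_j$ alone carries $f$'s behaviour, is absent because $f$ is defined on $\Omega_1\cap\Omega_2$ only. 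I expect the paper resolves this by the slick multiplier argument: $g := g_1 + g_2 \in A^p(\Omega_1 \cap \Omega_2)$ equals $f/q$ there, and since $1/q \in H^\infty(\Omega_1 \cap \Omega_2)$ with $q$ entire, $f = q g$ on the bounded set $\Omega_1\cap\Omega_2$, whence $f \in q\,(A^p(\Omega_1) + A^p(\Omega_2)) \subset A^p(\Omega_1) + A^p(\Omega_2)$ using once more that $q$ maps each $A^p(\Omega_j)$ to itself \emph{on bounded pieces} and is handled on unbounded pieces by absorbing into the definition — the rigorous statement being that multiplication by the polynomial $q$ is a bounded operator $A^p(\Omega_j) \to A^p(\Omega_j)$ is false in general, so the genuine content, and the step to get right, is the interplay of the weight exponent $1$ in $\omega_1$ with the degree $2$ of $q$, matching exactly $|q(z)|^p \asymp (1+|z|^2)^p = 1+|z|^{2p} + \dots$, so that $q \cdot A^p(\Omega_j,\omega_1) = A^p(\Omega_j)$ as an \emph{equality of spaces with equivalent norms}, which is the lemma to isolate and prove first.
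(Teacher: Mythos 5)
You have all the right ingredients --- a point $z_0$ with $\dist(z_0,\overline{\Omega_1\cup\Omega_2})>0$, the degree-$2$ polynomial whose $p$-th power matches the weight $\omega_1(z)=(1+|z|^{2p})^{-1}$, and the observation that dividing by it converts $A^p(\Omega_j,\omega_1)$ into $A^p(\Omega_j)$ --- but you never assemble them into a working argument, and your text openly records the failure (``We have NOT gained anything yet'', ``fails on the unbounded part unless\ldots''). The circularity you run into comes from applying Theorem \ref{thmCousin} to $f$ itself: that produces $f_j\in A^p(\Omega_j,\omega_1)$, and then $f_j/q$ lies in the unweighted space but $q\cdot(f_j/q)=f_j$ does not, so you cannot reconstitute $f$. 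The missing move, which is the paper's entire proof, is to reverse the order: set $g:=Pf$ with $P(z)=(z-z_0)^2$ \emph{first}. Since $\Omega_1\cap\Omega_2$ is bounded, $P$ is bounded there, so $g\in A^p(\Omega_1\cap\Omega_2)$ --- this is exactly where the boundedness hypothesis enters. Apply Theorem \ref{thmCousin} to $g$ to get $g=g_1+g_2$ with $g_j\in A^p(\Omega_j,\omega_1)$, and only then divide: $f=g/P=g_1/P+g_2/P$ with $g_j/P\in A^p(\Omega_j)$ because $|z-z_0|^{2p}\gtrsim 1+|z|^{2p}$ on $\Omega_j$. Nothing ever needs to be multiplied by $P$ on an unbounded set, which is precisely the obstruction you could not get around.

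A second, smaller point: the ``lemma to isolate'' at the end of your proposal is stated with the inclusion backwards. Since $\omega_1$ decays, $A^p(\Omega_j)\subset A^p(\Omega_j,\omega_1)$, and the correct identity is $q\cdot A^p(\Omega_j)=A^p(\Omega_j,\omega_1)$, equivalently $\frac{1}{q}\,A^p(\Omega_j,\omega_1)=A^p(\Omega_j)$; your displayed computation of $\int_{\Omega_j}|f_j/q|^p$ is consistent with this, but the final statement $q\cdot A^p(\Omega_j,\omega_1)=A^p(\Omega_j)$ is false (multiplying by $q$ worsens growth at infinity rather than curing it). With the order of operations corrected and the lemma restated in the right direction, your argument becomes the paper's proof.
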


The case when $\dist(\Omega_1 \setminus \Omega_2,\  \Omega_2 \setminus \Omega_1) =0$
is more intricate. 
Let us begin with the simplest configuration of interest for us: $\Omega_1$ and $\Omega_2$ are half planes which intersect perpendicularly. By rotation and translation we can of course reduce the situation to the upper and right half planes: $\C^+=\enstq{z\in \C}{\mathrm{Im}(z) >0}$ and $\C_+=\enstq{z\in \C}{\mathrm{Re}(z) >0}$. We write $\C^{++}=\C_+ \cap \C^+$ for the resulting quarter plane. 

\begin{thm}
\label{thm1}
Let $1<p<\infty$. 
Then $A^p(\C^+)  + A^p(\C_+) = A^p(\C^{++})$. 
\end{thm}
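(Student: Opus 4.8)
\emph{Proof strategy.} The inclusion ``$\subseteq$'' is immediate: if $f_j$ lies in $A^p$ of a half plane containing $\C^{++}$, then $f_1+f_2\in\Hol(\C^{++})$ and $\norme{f_1+f_2}_{L^p(\C^{++})}\le\norme{f_1}_{A^p(\C^+)}+\norme{f_2}_{A^p(\C_+)}<\infty$. The content lies in the reverse inclusion, and the difficulty is that Theorem~\ref{thmCousin} is not directly applicable, since $\dist(\C^+\setminus\C_+,\,\C_+\setminus\C^+)=0$: the two ``bad sets'' touch at the corner $0$. The plan is to straighten this corner by a conformal change of variables, after which the $\overline{\partial}$-scheme underlying Theorem~\ref{thmCousin} can be run on a strip of \emph{finite width}, where no decaying weight is needed.

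A fixed branch of $\log$ maps $\C^{++}$, $\C^+$, $\C_+$ conformally onto the strips $S_{0,\pi/2}$, $S_{0,\pi}$, $S_{-\pi/2,\pi/2}$ respectively, where $S_{a,b}:=\{w:a<\mathrm{Im}\,w<b\}$; note $S_{0,\pi}\cap S_{-\pi/2,\pi/2}=S_{0,\pi/2}$ and, crucially, $\dist(S_{0,\pi}\setminus S_{-\pi/2,\pi/2},\,S_{-\pi/2,\pi/2}\setminus S_{0,\pi})=\pi/2>0$. For any conformal $\varphi$ the map $g\mapsto(g\circ\varphi)\cdot(\varphi')^{2/p}$ is an isometric isomorphism between the corresponding unweighted Bergman spaces (the $A^p$-analogue of \eqref{confmap}, by a change of variables), and it is compatible with inclusions of domains. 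Hence Theorem~\ref{thm1} is equivalent to $A^p(S_{0,\pi})+A^p(S_{-\pi/2,\pi/2})=A^p(S_{0,\pi/2})$, of which only ``$\supseteq$'' remains.

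Given $F\in A^p(S_{0,\pi/2})$, I would fix $\delta>0$ small and $\chi\in C^\infty(\C)$ depending only on $\mathrm{Im}\,w$, with $\chi\equiv1$ on $\{\mathrm{Im}\,w\ge\pi/2-\delta\}$, $\chi\equiv0$ on $\{\mathrm{Im}\,w\le\delta\}$ and $\norme{\overline{\partial}\chi}_\infty<\infty$ — possible precisely because the transition is performed across the band $\{\delta\le\mathrm{Im}\,w\le\pi/2-\delta\}$, which stays at positive distance from $\partial S_{0,\pi/2}$ (this is where the finite width, i.e.\ the scale invariance of the original picture, is used). Put $F_1^{(0)}:=(1-\chi)F$, extended by $0$ across $\{\mathrm{Im}\,w=\pi/2\}$ into $S_{0,\pi}$, and $F_2^{(0)}:=\chi F$, extended by $0$ across $\{\mathrm{Im}\,w=0\}$ into $S_{-\pi/2,\pi/2}$; both are $C^\infty$, $F_1^{(0)}+F_2^{(0)}=F$ on $S_{0,\pi/2}$, and $g:=\overline{\partial}F_2^{(0)}=(\overline{\partial}\chi)F=-\,\overline{\partial}F_1^{(0)}$ is a $(0,1)$-form supported in the band, with $\norme{g}_{L^p(S_{-\pi/2,\pi})}\le\norme{\overline{\partial}\chi}_\infty\norme{F}_{A^p(S_{0,\pi/2})}$. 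Next I would solve $\overline{\partial}u=g$ on $S_{-\pi/2,\pi}=S_{0,\pi}\cup S_{-\pi/2,\pi/2}$ with $\norme{u}_{L^p(S_{-\pi/2,\pi})}\lesssim_p\norme{g}_{L^p(S_{-\pi/2,\pi})}$; then $F_1:=F_1^{(0)}+u$ and $F_2:=F_2^{(0)}-u$ are holomorphic on $S_{0,\pi}$, resp.\ $S_{-\pi/2,\pi/2}$ (since $\overline{\partial}F_1=-g+g=0$ and $\overline{\partial}F_2=g-g=0$), lie in the respective Bergman spaces by the bounds above, and $F_1+F_2=F$ on $S_{0,\pi/2}$. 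Transporting back through $\log$ gives $f_1\in A^p(\C^+)$, $f_2\in A^p(\C_+)$ with $f_1+f_2=f$.

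For the $\overline{\partial}$-step I would use the Cauchy transform $u(w)=\tfrac1\pi\int g_0(\zeta)(w-\zeta)^{-1}\,dA(\zeta)$ (writing $g=g_0\,d\overline\zeta$) and estimate it on $S_{-\pi/2,\pi}$ by integrating first in $\mathrm{Re}\,\zeta$ — where convolution on $\R$ with $x\mapsto(x+ic)^{-1}$ is bounded on $L^p(\R)$ uniformly in $c\ne0$, its real and imaginary parts being the conjugate Poisson and the Poisson kernels — and then in $\mathrm{Im}\,\zeta$ and $\mathrm{Im}\,w$ by Minkowski's inequality; the \emph{finite} width of $S_{-\pi/2,\pi}$ is exactly what makes these last integrations converge, which is why no weight is needed here. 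This $L^p$-estimate for $\overline{\partial}$ on the strip, with an honest weightless constant depending only on $p\in(1,\infty)$, is what I expect to be the technical heart of the argument; the remaining points — the $C^\infty$-regularity of $u$ and the smoothness of the zero-extensions of $F_1^{(0)},F_2^{(0)}$, automatic since $\chi$ vanishes identically on the relevant half planes — are routine.
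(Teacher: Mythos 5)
Your proof is correct, but it takes a genuinely different route from the paper. The paper's argument for $p=2$ rests on the ``magic'' reproducing-kernel identity $k_\lambda^{\C^+}+k_\lambda^{\C_+}=k_\lambda^{\C^{++}}$, which yields the explicit separation operator $f\mapsto\bigl(\mathrm{P}_{\C^+}(S_{\C^+}f),\,\mathrm{P}_{\C_+}(S_{\C_+}f)\bigr)$; the case $p\neq2$ is then obtained from the $L^p$-boundedness of the Bergman projection on a half plane together with the density of $A^2\cap A^p$ in $A^p(\C^{++})$ (Remark \ref{rmkdensity}). You instead straighten the corner by $\log$, reducing to strips whose ``bad sets'' are at distance $\pi/2$, and run the $\overline{\partial}$-scheme of Theorem \ref{thmCousin} there; the point, which you correctly isolate as the technical heart, is that on a strip of finite width with data supported in an interior band one gets an \emph{unweighted} $L^p$-estimate for the Cauchy transform (your Poisson/conjugate-Poisson plus Minkowski argument is sound, with $1<p<\infty$ entering through the Hilbert transform exactly where the paper needs it through the Bergman projection), so no analogue of Lemma \ref{Hormander}'s weight $\omega_1$ appears and no multiplier trick is needed to remove it. Both proofs produce a bounded linear separation operator. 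The paper's identity is strikingly short for $p=2$ but is rigidly tied to the right-angle configuration (the paper notes it breaks down for other sectors, forcing the angle-doubling induction of Lemma \ref{DecompLemma}); your argument applies verbatim to an arbitrary sector $\Delta_0^b=\C^+\cap\Delta_{b-\pi}^{b}$, since $\log$ sends it to $S_{0,b}=S_{0,\pi}\cap S_{b-\pi,b}$ with the bad sets at distance $b>0$ and the union of finite width $2\pi-b$, so it would in fact deliver Theorem \ref{thm2} directly. The only points to spell out in a final write-up are the consistent choice of branches for $\log$ and $(\varphi')^{2/p}$ on $\C^+\cup\C_+$ (the principal branch works, as this union avoids $(-\infty,0]$) and the standard fact that the Cauchy transform solves $\overline{\partial}u=g$ distributionally; neither is an obstacle.
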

The proof of this theorem is strikingly simple when $p=2$ where it uses only the explicit form of the reproducing kernels of the two half planes. Though the same idea does not apply to arbitrary sectors we can reduce that general situation to right angle sectors which leads to our next result.

\begin{thm}
\label{thm2}
Let $1 < p < \infty$.
Let $H_1, H_2$ be two half planes such that $\Sigma:= H_1\cap H_2 \neq \emptyset$ is a sector. 
Then $A^p(\Sigma)= A^p(H_1) + A^p(H_2)$. 
\end{thm}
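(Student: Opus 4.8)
The plan is to reduce the general sector $\Sigma = H_1 \cap H_2$ to the right-angle case already settled in Theorem~\ref{thm1}, using a conformal change of variable together with the conformal invariance of Bergman spaces. Write the opening angle of $\Sigma$ as $\alpha\pi$ with $0 < \alpha < 1$ (if $\alpha = 1$ the half planes coincide up to the boundary line and there is nothing to prove; if $\alpha > 1$ one of the $H_i$ contains $\Sigma$ trivially, but since $\Sigma$ is required to be a genuine sector we have $0<\alpha<1$). After a rotation and translation, place the vertex of $\Sigma$ at the origin so that $\Sigma = \{ r e^{i\theta} : r>0,\ 0 < \theta < \alpha\pi \}$, and the two edges of $\Sigma$ lie on $\partial H_1$ and $\partial H_2$. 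The map $\varphi(z) = z^{\beta}$ with $\beta = 1/(2\alpha)$ sends $\Sigma$ conformally onto the quarter plane $\C^{++}$ (opening angle $\pi/2$), and simultaneously sends $H_1$ and $H_2$ onto the two half planes $\C^+$, $\C_+$ whose intersection is $\C^{++}$ --- more precisely, $\varphi$ maps each edge ray of $\Sigma$ to an edge ray of $\C^{++}$, and since $H_i$ is the half plane bounded by the line through one edge, $\varphi$ maps $H_i$ onto the corresponding coordinate half plane. (One should be slightly careful: $\varphi$ is only defined and conformal after choosing a branch of the power on a slit plane; since $H_1, H_2, \Sigma$ all avoid the relevant slit, this is harmless.)

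Next I would transport the decomposition through $\varphi$ using the change-of-variables formula for $A^p$ norms under a conformal map. If $\psi : U \to V$ is conformal, then $f \mapsto (f\circ\psi)\,(\psi')^{2/p}$ is an isometric isomorphism $A^p(V) \to A^p(U)$, because $|(f\circ\psi)(z)\,\psi'(z)^{2/p}|^p = |f(\psi(z))|^p |\psi'(z)|^2$ and $|\psi'(z)|^2$ is the Jacobian of $\psi$. Applying this with $\psi = \varphi^{-1} : \C^{++} \to \Sigma$ (or equivalently pulling back along $\varphi$), the statement $A^p(\C^{++}) = A^p(\C^+) + A^p(\C_+)$ from Theorem~\ref{thm1} becomes, after multiplying by the common factor $(\varphi')^{2/p}$, the statement $A^p(\Sigma) = A^p(H_1) + A^p(H_2)$, since multiplication by the nowhere-vanishing holomorphic function $(\varphi')^{2/p}$ (fixing a branch on the simply connected region $\Sigma$) is a bijection of each space involved onto itself.

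The main obstacle is making the last sentence rigorous: one must check that multiplication by $(\varphi')^{2/p} = (\beta z^{\beta-1})^{2/p}$, i.e. by a power of $z$, really maps $A^p(\C^+)$ boundedly and invertibly into $A^p(H_1)$ and likewise for the index $2$, rather than merely permuting $A^p(\Sigma)$ with itself. Concretely, one needs: for $g \in A^p(H_1)$, the function $(g\circ\varphi)(\varphi')^{2/p}$ lies in $A^p(\C^+)$, and conversely every $h \in A^p(\C^+)$ arises this way from some $g \in A^p(H_1)$. The forward direction is exactly the conformal isometry above; the converse direction needs that $h\cdot(\varphi^{-1})'{}^{2/p}\circ\varphi^{-1}$... — equivalently, that the inverse conformal map $\varphi^{-1}$ also induces an isometry, which it does since it too is conformal on the relevant domain. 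So in fact no genuine obstacle remains beyond bookkeeping: the whole argument is "apply the conformal-invariance isometry to the three spaces in Theorem~\ref{thm1} simultaneously." The one point demanding care is branch-of-power and domain-of-definition issues for $z^{\beta}$ and $(\varphi')^{2/p}$ on $H_1$, $H_2$, and $\Sigma$, which should be dispatched by noting that all three regions are simply connected and avoid the origin's slit, so every fractional power appearing is single-valued and holomorphic there.
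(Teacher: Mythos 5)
There is a genuine gap, and it sits exactly at the step you dismiss as ``bookkeeping.'' The power map $\varphi(z)=z^{\beta}$ with $\beta=1/(2\alpha)$ multiplies \emph{every} angular opening at the vertex by $\beta$, so while it does send $\Sigma=\Delta_0^{\alpha\pi}$ onto the quarter plane $\C^{++}$, it does \emph{not} send the half planes $H_1,H_2$ onto $\C^{+}$ and $\C_{+}$: the image of a half plane (opening $\pi$) is a sector of opening $\beta\pi$, which equals $\pi$ only when $\beta=1$, i.e.\ only when $\Sigma$ is already a right-angle sector. Equivalently, $\varphi^{-1}(\C^{+})=\Delta_0^{2\alpha\pi}$ is a sector of \emph{twice} the opening of $\Sigma$, not a half plane (unless $\alpha\ge 1/2$). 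Your assertion that ``$\varphi$ maps each edge ray of $\Sigma$ to an edge ray of $\C^{++}$, hence maps $H_i$ onto the corresponding coordinate half plane'' fails because $\partial H_i$ is a full line through the vertex, and the power map does not send the ray opposite to an edge of $\Sigma$ to the continuation of the image ray. Consequently, pulling back the decomposition of Theorem~\ref{thm1} along $\varphi$ only shows that $f\in A^p(\Sigma)$ splits into pieces holomorphic (and $p$-integrable) on sectors of opening $2\alpha\pi$ --- this is precisely Lemma~\ref{DecompLemma} of the paper, which the authors phrase as ``doubling the opening of the sector.'' The paper even warns explicitly, just before the proof, that the temptation to transfer the right-angle result to an arbitrary sector by a single power map breaks down.

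What is missing from your argument is therefore the entire iterative mechanism: one must apply the doubling step repeatedly, keeping track of the $2^N$ sectors $\Delta_{-k\theta}^{(2^N-k)\theta}$ produced after $N$ steps, and choose $N$ with $2^N\theta>2\pi$ so that each resulting sector contains one of the two target half planes (half of them contain $\C^+$, the other half contain $\Delta_{\theta-\pi}^{\theta}$). Your conformal-invariance isometry $g\mapsto (g\circ\varphi)(\varphi')^{2/p}$ and the branch-cut discussion are fine as far as they go, and they are exactly the tools the paper uses inside Lemma~\ref{DecompLemma}; but without the iteration the proof establishes only $A^p(\Delta_0^{\theta})=A^p(\Delta_0^{2\theta})+A^p(\Delta_{-\theta}^{\theta})$ (with the obvious truncations at $\pm\pi$), which for an acute sector is strictly weaker than the theorem.
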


The main result of this part of the paper is the separation of singularities problem for $n$ half planes, the intersection of which is a convex polygon. 
\begin{thm}
\label{thm3}
Let $1 < p < \infty$.
Let $H_1, H_2, \dots, H_n$ be half planes such that $\mathcal{P}:= \bigcap_{k=1}^n H_k \neq \emptyset$ is a convex polygon. 
Then $A^p(\mathcal{P})= \sum_{k=1}^n A^p(H_k)$. 
\end{thm}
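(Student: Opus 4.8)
The plan is to prove Theorem \ref{thm3} by induction on $n$, reducing the general convex polygon to the two-half-plane case handled by Theorem \ref{thm2}. The base cases $n=1$ (trivial) and $n=2$ (Theorem \ref{thm2}, since the intersection of two half planes is a sector) are already in hand. For the inductive step, suppose the result holds for any convex polygon realized as an intersection of at most $n-1$ half planes, and let $\mathcal{P}=\bigcap_{k=1}^n H_k$ be a convex polygon. The idea is to peel off one half plane: set $\mathcal{P}'=\bigcap_{k=1}^{n-1}H_k$, which is a convex (possibly unbounded) polygonal region that is an intersection of $n-1$ half planes, and write $\mathcal{P}=\mathcal{P}'\cap H_n$. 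If I can show $A^p(\mathcal{P})=A^p(\mathcal{P}')+A^p(H_n)$, then by the induction hypothesis $A^p(\mathcal{P}')=\sum_{k=1}^{n-1}A^p(H_k)$ and we are done. The inclusion $\supseteq$ is immediate by restriction (note $\mathcal{P}\subset\mathcal{P}'$ and $\mathcal{P}\subset H_n$, and restriction of a Bergman function to a smaller domain has smaller norm). So the content is the inclusion $A^p(\mathcal{P})\subset A^p(\mathcal{P}')+A^p(H_n)$.

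To obtain that inclusion, the natural tool is Theorem \ref{thmCousin} together with the localization trick from Corollary \ref{corCousin}. The obstruction is that $\dist(\mathcal{P}'\setminus H_n,\,H_n\setminus\mathcal{P}')$ need not be positive: the boundary lines of $\mathcal{P}'$ and of $H_n$ meet at the vertices of $\mathcal{P}$. The remedy is to localize near each such vertex. Near a vertex $v$ where the edge lying on $\partial H_n$ meets an edge of $\mathcal{P}'$, the region $\mathcal{P}$ looks, in a neighborhood of $v$, like a sector which is exactly the intersection of two half planes — one of them $H_n$ and the other the half plane $H_j$ supporting the adjacent edge of $\mathcal{P}'$. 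So locally near each vertex the problem is a two–half–plane problem governed by Theorem \ref{thm2}, while away from a fixed neighborhood of all vertices the sets $\mathcal{P}'\setminus H_n$ and $H_n\setminus\mathcal{P}'$ are separated, and Theorem \ref{thmCousin} applies. The standard way to glue these local solutions is a smooth partition of unity argument combined with solving a $\overline\partial$-equation with $L^p$ estimates, exactly as in the reduction used for Theorem \ref{thmCousin}; alternatively one iterates: first split off the singularity of $f\in A^p(\mathcal{P})$ along the edge on $\partial H_n$ (using Theorem \ref{thm2} on the sector neighborhoods of the two endpoints of that edge, and Theorem \ref{thmCousin} in between), producing $f=g+h$ with $g\in A^p(\mathcal{P}')$ and $h\in A^p(H_n)$ — at least after incorporating a decaying weight $\omega_1$, which is then removed exactly as in Corollary \ref{corCousin} using that $\mathcal{P}$ is bounded and polynomials nonvanishing on $\overline{\mathcal{P}}$ are invertible multipliers of $A^p(\mathcal{P})$.

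Concretely, for the edge $e_n$ of $\mathcal{P}$ lying on $\partial H_n$, let $v$ and $w$ be its endpoints, the adjacent edges lying on $\partial H_i$ and $\partial H_j$ respectively. Choose disjoint disks $D_v$, $D_w$ around $v$, $w$ small enough that $\mathcal{P}\cap D_v=H_n\cap H_i\cap D_v$ and $\mathcal{P}\cap D_w=H_n\cap H_j\cap D_w$. Take a smooth cutoff $\chi$ equal to $1$ on a neighborhood of $e_n\setminus(D_v\cup D_w)$ and supported away from the other edges; the function $\overline\partial(\chi f)=f\,\overline\partial\chi$ is supported in a region at positive distance from $\partial\mathcal{P}$, so by Hörmander's $L^p$ estimates on $\mathcal{P}$ (with a weight $\omega_1$ to control growth) one can solve $\overline\partial u = f\,\overline\partial\chi$ with $u\in L^p(\mathcal{P},\omega_1)$; then $\chi f - u$ is holomorphic and, being supported (before correction) away from the edges other than $e_n$, extends holomorphically across all of $\partial\mathcal{P}$ except possibly $e_n$ and the two vertex disks. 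On $D_v$ and $D_w$ one invokes Theorem \ref{thm2} to split the remaining singularity between $H_n$ and $H_i$ (resp. $H_j$); the $H_i$- and $H_j$-pieces are absorbed into the $\mathcal{P}'$-term since $H_i,H_j\supset\mathcal{P}'$. Collecting terms gives $f=g+h$ with $g\in A^p(\mathcal{P}',\omega_1)$ and $h\in A^p(H_n,\omega_1)$; removing the weight via the multiplier argument of Corollary \ref{corCousin} (valid because $\mathcal{P}$ is bounded) yields $g\in A^p(\mathcal{P}')$, $h\in A^p(H_n)$, completing the induction.

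I expect the main obstacle to be the rigorous bookkeeping of the partition-of-unity/$\overline\partial$ gluing, in particular checking that the weighted $L^p$ $\overline\partial$-estimates can be applied on the polygon $\mathcal{P}$ and on the sector neighborhoods uniformly, and that the error terms $u$ produced there genuinely land in the correct weighted Bergman spaces after being transported to the half planes $H_k$. Once the geometry is set up so that every vertex reduces cleanly to a right-angle-or-not sector covered by Theorem \ref{thm2}, and every edge's complementary region is handled by Theorem \ref{thmCousin}, the induction itself is straightforward.
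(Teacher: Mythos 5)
Your toolbox is the right one (positive-distance Cousin splitting, sectors at the vertices via Theorem \ref{thm2}, polynomial multipliers to strip off weights), but two steps of the proposed induction have genuine gaps. The first is that the induction hypothesis cannot be applied to $\mathcal{P}'=\bigcap_{k=1}^{n-1}H_k$: this is an \emph{unbounded} convex region, whereas the statement being proved concerns bounded convex polygons. For unbounded intersections of half planes the unweighted identity $A^p(\mathcal{P}')=\sum_{k=1}^{n-1}A^p(H_k)$ is not available --- the paper only proves a weighted version (Lemma \ref{lemmaunbounded}), in which the weight deteriorates to $\omega_{n-1}$ as half planes accumulate, and the weight can only be removed back on the bounded polygon by a polynomial of degree matched to the accumulated weight, anchored at a point $z_0$ outside the closure of the union of all domains involved. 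This is precisely why the paper's inductive step splits $\mathcal{P}$ along two non-consecutive \emph{non-parallel} sides, so that one piece is a bounded polygon of lower degree (where the unweighted inductive hypothesis applies) and the other is an unbounded region contained in $H_i\cup H_j\neq\C$; the parallelogram, where no such pair of sides exists, must be treated separately. Your peeling-off-one-half-plane scheme never produces a bounded sub-polygon and does not track the weights.

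The second gap is in the gluing along $e_n$. A cutoff $\chi$ equal to $1$ only near $e_n\setminus(D_v\cup D_w)$ leaves $(1-\chi)f$ with a genuine singularity along $e_n\cap D_v$ and $e_n\cap D_w$, so it does not extend by zero across $e_n$ into $\mathcal{P}'$; a cutoff equal to $1$ along all of $e_n$ up to the vertices and vanishing near the adjacent edges necessarily has unbounded gradient, so $f\overline{\partial}\chi$ need not be in $L^p$. (Moreover $f\overline{\partial}\chi$ is \emph{not} supported at positive distance from $\partial\mathcal{P}$, and solving $\overline{\partial}u=f\overline{\partial}\chi$ on $\mathcal{P}$ alone gives no holomorphic continuation of $\chi f-u$ past $\partial\mathcal{P}$; as in the proof of Theorem \ref{thmCousin} one must solve on the union of the two target domains.) Finally, ``invoking Theorem \ref{thm2} on $D_v$'' presupposes a function in $A^p$ of the \emph{full infinite} sector $H_n\cap H_i$, not merely of the truncated sector $\mathcal{P}\cap D_v$; manufacturing such a function is itself a separation step, which the paper carries out globally by writing $\mathcal{P}=\Sigma\cap\Omega$ with $\Sigma=H_n\cap H_i$ and $\dist(\Sigma\setminus\Omega,\Omega\setminus\Sigma)>0$, and applying Corollary \ref{corCousin}. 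That global positive-distance split is the mechanism that replaces your local partition of unity, and it is where the actual work of the proof lies.
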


It is worth mentioning that when $\Omega$ is a polygon it is known that Schwarz-Christoffel mappings allow to send the upper half plane conformally onto $\Omega$, so that with \eqref{confmap} it is possible to define the Bergman kernel for $A^2(\Omega)$. However, already for a square, the understanding of the corresponding reproducing kernel is a very non-trivial matter.
\\

Let us consider a special case illustrating the above results:
$$
 \Omega=\{z=x+iy\in \C:0<x<1, 0<y<1\}
$$
i.e. $\Omega$
is the unit square with sides parallel to the coordinate axes and lower left corner $0$. Let $\Omega_1=\C^{++}$ and $\Omega_2=(1+i)-\C^{++}$. Then
$\Omega=\Omega_1\cap\Omega_2$, and Theorems \ref{thm2} and \ref{thm3} yield the 
following immediate consequence which will resolve the conjecture on the reachable states of the 1-D heat equation on a finite rod with boundary controls as discussed in the next section.

\begin{cor}\label{coro1}
We have $A^p(\Omega)=A^p(\C^{++})+A^p((1+i)-\C^{++})$.
\end{cor}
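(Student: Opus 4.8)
The plan is to realise the square $\Omega$ as an intersection of \emph{four} half-planes, apply Theorem~\ref{thm3} to split $A^p(\Omega)$ into the sum of the corresponding four Bergman spaces, and then recombine the four summands into two using Theorem~\ref{thm2} (here even Theorem~\ref{thm1} suffices, since the relevant sectors have a right angle).

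Explicitly, set $H_1=\C_+=\enstq{z\in\C}{\mathrm{Re}(z)>0}$, $H_2=\C^+=\enstq{z\in\C}{\mathrm{Im}(z)>0}$, $H_3=\enstq{z\in\C}{\mathrm{Re}(z)<1}$ and $H_4=\enstq{z\in\C}{\mathrm{Im}(z)<1}$. Then $\bigcap_{k=1}^{4}H_k=\Omega$ is the open unit square, a bounded convex polygon, so Theorem~\ref{thm3} yields $A^p(\Omega)=\sum_{k=1}^{4}A^p(H_k)$. Next, $\C^{++}=H_1\cap H_2$ is a right-angle sector, so Theorem~\ref{thm2} gives $A^p(\C^{++})=A^p(H_1)+A^p(H_2)$; and $(1+i)-\C^{++}=\enstq{z\in\C}{\mathrm{Re}(z)<1}\cap\enstq{z\in\C}{\mathrm{Im}(z)<1}=H_3\cap H_4$ is again a right-angle sector, being the image of $\C^{++}$ under the affine involution $\iota\colon z\mapsto(1+i)-z$. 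Since $\iota'\equiv -1$, composition with $\iota$ is a bijective isometry of $L^p$ carrying $\C^{++}$ onto $(1+i)-\C^{++}$ and $H_1,H_2$ onto $H_3,H_4$ respectively, so it transports the identity $A^p(\C^{++})=A^p(H_1)+A^p(H_2)$ to $A^p((1+i)-\C^{++})=A^p(H_3)+A^p(H_4)$. Combining the three identities,
\[
 A^p(\C^{++})+A^p((1+i)-\C^{++})=\sum_{k=1}^{4}A^p(H_k)=A^p(\Omega),
\]
as claimed.

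I do not anticipate any genuine difficulty: the analytic substance is entirely contained in Theorems~\ref{thm2} and~\ref{thm3}, which are taken as established. The only verifications needed --- all immediate --- are that the four half-planes intersect precisely in the open unit square and that $(1+i)-\C^{++}$ is a sector of the form handled by Theorem~\ref{thm2}. In fact, one can avoid Theorem~\ref{thm2} altogether: the inclusion $A^p(\C^{++})+A^p((1+i)-\C^{++})\subseteq A^p(\Omega)$ is trivial because $\Omega$ lies inside each of the two sectors and restriction of a holomorphic function cannot increase its $A^p$-norm, while the reverse inclusion follows by restricting, in the decomposition $A^p(\Omega)=\sum_{k}A^p(H_k)$, each summand $A^p(H_k)$ to the one of the two sectors $\C^{++}$, $(1+i)-\C^{++}$ that it contains.
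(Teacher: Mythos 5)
Your proof is correct and follows essentially the same route the paper intends for this corollary: realize the square as the intersection of four half planes, apply Theorem~\ref{thm3}, and regroup the four summands pairwise into the two right-angle sectors using Theorem~\ref{thm2} (or Theorem~\ref{thm1} with Remark~\ref{rmk1}) --- and, as you observe, for the regrouping only the trivial inclusions $A^p(H_k)\subset A^p(\C^{++})$, resp.\ $A^p(H_k)\subset A^p((1+i)-\C^{++})$, are actually needed. The paper additionally supplies a more self-contained proof in Section~\ref{sec3} via half-strips and Corollary~\ref{corCousin}, but your argument is complete as it stands.
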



It turns out that we can apply Theorem \ref{thm3} to more general domains. More precisely,
we will consider non-empty, bounded intersections of convex domains $\Omega_1$ and
$\Omega_2$. Then the boundaries $\partial\Omega_1$ and $\partial\Omega_2$ can meet in single points or along curves. We will assume that there are only finitely many single points and arcs, i.e.\ $\tilde\partial(\partial\Omega_1\cap \partial\Omega_2)$ is finite (by $\partial\Omega$ we mean the boundary of a two dimensional manifold $\Omega$, and by $\tilde\partial E$ the boundary of a one-dimensional manifold $E$).


\begin{thm}
\label{thmcvx}
Let $1 < p < \infty$.
Let $\Omega_1$ and $\Omega_2$ be two open convex sets in $\C$ such that 
\begin{enumerate}[label=(\roman*)]
\item $\Omega_1 \cap \Omega_2$ is non-empty and bounded,
\item The set $\tilde{\partial} \left(\partial \Omega_1 \cap \partial \Omega_2 \right)$ is finite.  
\end{enumerate}
Then $A^p(\Omega_1 \cap \Omega_2)= A^p(\Omega_1) + A^p(\Omega_2)$. 
\end{thm}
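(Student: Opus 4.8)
The plan is to localise the whole difficulty at the finitely many points where $\partial\Omega_1$ and $\partial\Omega_2$ genuinely cross, to dispose of each such point by reduction to the sector case already settled in Theorem~\ref{thm2}, and then to reassemble the local pieces by a partition of unity, absorbing the gluing errors by the $\overline\partial$-method of Theorem~\ref{thmCousin}. Throughout write $D=\Omega_1\cap\Omega_2$ and $E=\partial\Omega_1\cap\partial\Omega_2$; one may assume $\Omega_1\neq\C\neq\Omega_2$, since otherwise $A^p(\C)=\{0\}$ makes the statement trivial.

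The first step is to isolate the genuinely singular points. By hypothesis (ii) the set $E$ is a finite union of points and arcs, and along the relative interior of such an arc the two \emph{convex} sets $\Omega_1$ and $\Omega_2$ coincide locally, so there nothing has to be done; hence $K:=\overline{\Omega_1\setminus\Omega_2}\cap\overline{\Omega_2\setminus\Omega_1}$ is finite, say $K=\{p_1,\dots,p_m\}$. Using that $D$ is bounded, for $r>0$ small the balls $B_j:=B(p_j,r)$ are pairwise disjoint and $\dist\bigl((\Omega_1\setminus\Omega_2)\setminus\bigcup_j B_j,\ (\Omega_2\setminus\Omega_1)\setminus\bigcup_j B_j\bigr)>0$. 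A recession-cone argument — two proper convex open subsets of $\C$ with bounded intersection cannot have dense union — also gives $\overline{\Omega_1\cup\Omega_2}\neq\C$, which is what will later allow the polynomial-multiplier device of Corollary~\ref{corCousin}.

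Next, for each $j$ I would produce a \emph{local} splitting, i.e.\ bounded linear operators realising $A^p(D\cap B_j)\subseteq A^p(\Omega_1\cap B_j)+A^p(\Omega_2\cap B_j)$. By convexity choose supporting half planes $H_1^j\supseteq\Omega_1$, $H_2^j\supseteq\Omega_2$ through $p_j$; when $p_j\in K$ their edges are distinct, so $\Sigma_j:=H_1^j\cap H_2^j$ is a genuine sector with vertex $p_j$, one has $D\subseteq\Sigma_j$, and, after shrinking $r$, each $\Omega_i\cap B_j$ is squeezed between a slightly narrower sector with vertex $p_j$ and $H_i^j\cap B_j$. The local splitting should then come from the model identity $A^p(\Sigma_j)=A^p(H_1^j)+A^p(H_2^j)$ of Theorem~\ref{thm2}, together with the bounded restrictions $A^p(H_i^j)\hookrightarrow A^p(\Omega_i\cap B_j)$ coming from $\Omega_i\subseteq H_i^j$. \emph{Carrying the decomposition across from the straight model $\Sigma_j$ to the curved convex pieces $\Omega_i\cap B_j$ — via this sandwiching — is the step I expect to be the main obstacle}, and it is precisely where the convexity of $\Omega_1$ and $\Omega_2$ is genuinely used.

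Finally I would glue. Cover a neighbourhood of $\overline D$ by $B_1,\dots,B_m$ together with one further open set $U_0$ meeting none of $B(p_j,r/2)$, pick a subordinate partition of unity, and patch the local splittings near the $p_j$ with the splitting on $U_0$ furnished by the $\overline\partial$-argument of Theorem~\ref{thmCousin} (applicable there since the two difference sets are at positive distance on $U_0$). Exactly as in the classical reduction of the separation problem to a Cousin problem, the mismatch between local pieces is a $\overline\partial$-closed datum of finite $L^p$ norm supported in the overlap regions — the annuli $B_j\setminus B(p_j,r/2)$ and $U_0$ — all of which keep the difference sets at positive distance; solving the resulting $\overline\partial$-equation on $\C$ with Hörmander-type $L^p$-estimates corrects the mismatch. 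Because the global $\overline\partial$-solution has only logarithmic growth one lands a priori in $A^p(\Omega_i,\omega_1)$; and since $D$ is bounded and $\overline{\Omega_1\cup\Omega_2}\neq\C$, multiplying and dividing by a high enough power of a polynomial without zeros on $\overline{\Omega_1\cup\Omega_2}$ — the device of Corollary~\ref{corCousin} — removes the weight and gives $f\in A^p(\Omega_1)+A^p(\Omega_2)$. The reverse inclusion is clear by restriction, which completes the argument.
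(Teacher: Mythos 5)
Your outline correctly isolates where the difficulty lives, but the step you yourself flag as ``the main obstacle'' --- producing the local splitting $A^p\left((\Omega_1\cap\Omega_2)\cap B_j\right)\subseteq A^p(\Omega_1\cap B_j)+A^p(\Omega_2\cap B_j)$ near a crossing point $p_j$ --- is a genuine gap, not a technicality. The sandwich between an inner sector and the outer sector $\Sigma_j=H_1^j\cap H_2^j$ points the wrong way for the tool you want to use: Theorem \ref{thm2} decomposes functions that are $p$-integrable on the \emph{whole} sector $\Sigma_j$, whereas your $f$ only lives on $(\Omega_1\cap\Omega_2)\cap B_j$, which is strictly smaller than $\Sigma_j\cap B_j$ as soon as one of the boundary arcs is curved; and decomposing $f$ on a narrower inner sector says nothing about $f$ on the crescents left over between that inner sector and the curved boundaries. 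So the model identity is never actually applicable to $f$, and no mechanism is offered for crossing from the straight model to the curved pieces. (The final gluing step, granted the local splittings, is standard Cousin patching and is fine in spirit; your auxiliary claims that the genuinely singular set is finite and that $\overline{\Omega_1\cup\Omega_2}\neq\C$ are also correct.)

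The paper closes exactly this gap by a different, global device. Writing $\tilde{\partial}\left(\partial\Omega_1\cap\partial\Omega_2\right)=\{z_1,\dots,z_n\}$, it inscribes the convex polygon $\mathcal{P}_{z_1,\dots,z_n}=\bigcap_k H_{z_k,z_{k+1}}$, which by convexity sits inside $\Omega_1\cap\Omega_2$; Theorem \ref{thm3} then gives $f=\sum_k f_k$ with $f_k\in A^p(H_{z_k,z_{k+1}})$. The point is then to show that each $f_k$ already lies in $A^p(\Omega_{i_k})$, where $i_k$ is chosen so that the boundary arc $\Gamma_{z_k,z_{k+1}}$ lies in $\partial\Omega_{i_k}$: on the lens-shaped region $O_{z_k,z_{k+1}}$ between the chord $[z_k,z_{k+1}]$ and that arc (intersected with the other half planes), one has the identity $f_k=f-\sum_{j\neq k}f_j$, and every term on the right is holomorphic and $p$-integrable there because $O_{z_k,z_{k+1}}\subseteq\Omega_1\cap\Omega_2$ and $O_{z_k,z_{k+1}}\subseteq H_{z_j,z_{j+1}}$ for $j\neq k$. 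This extends $f_k$ across the chord into all of $\Omega_{i_k}$ for free, with no partition of unity, no $\overline{\partial}$-correction and no weights. To rescue your local approach you would need to supply precisely such an extension mechanism; the algebraic identity on the lens is the idea your proposal is missing.
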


All these theorems have weighted versions with weights $\omega_l$ ($l \in \N$). 
Let us mention another direct consequence. Recall that the Dirichlet space $\mathcal{D}(\Omega)$ consists of all functions $f$ holomorphic on $\Omega$ satisfying $f' \in A^2(\Omega)$ (since the formerly stated results for Bergman spaces work for $1<p<\infty$, we can also consider the corresponding Dirichet type spaces which are rather called Besov spaces). 
Applying the above decompositions to $f'$ and taking anti-derivatives yields the corresponding decompositions in Dirichlet spaces. Note the following general results. 
\begin{prop}
\label{Dirichlet}
Let $\Omega_1$ and $\Omega_2$ be two simply connected domains in $\C$ such that $\Omega_1 \cap \Omega_2 \neq \emptyset$. 
If $A^2(\Omega_1 \cap \Omega_2) = A^2(\Omega_1) +A^2(\Omega_2)$ then $\mathcal{D}(\Omega_1 \cap \Omega_2) = \mathcal{D}(\Omega_1) +\mathcal{D}(\Omega_2)$. 
\end{prop}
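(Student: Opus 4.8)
The plan is to establish the two inclusions separately. The inclusion $\mathcal{D}(\Omega_1)+\mathcal{D}(\Omega_2)\subseteq\mathcal{D}(\Omega_1\cap\Omega_2)$ is immediate: if $f_i\in\mathcal{D}(\Omega_i)$ then, since $\Omega_1\cap\Omega_2\subseteq\Omega_i$, one has $\int_{\Omega_1\cap\Omega_2}|f_i'|^2\,dxdy\leq\int_{\Omega_i}|f_i'|^2\,dxdy<\infty$, hence $f_i'\in A^2(\Omega_1\cap\Omega_2)$; as differentiation commutes with restriction, $(f_1+f_2)'=f_1'+f_2'\in A^2(\Omega_1\cap\Omega_2)$, so $f_1+f_2\in\mathcal{D}(\Omega_1\cap\Omega_2)$. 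The real content is the reverse inclusion, which I would obtain by differentiating, applying the hypothesis, and integrating back.

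More precisely, let $f\in\mathcal{D}(\Omega_1\cap\Omega_2)$, so that $f'\in A^2(\Omega_1\cap\Omega_2)=A^2(\Omega_1)+A^2(\Omega_2)$ by assumption; write $f'=g_1+g_2$ with $g_i\in A^2(\Omega_i)$. Since $\Omega_i$ is simply connected, $g_i$ admits a single-valued holomorphic primitive $G_i$ on $\Omega_i$, and $G_i'=g_i\in A^2(\Omega_i)$ means precisely $G_i\in\mathcal{D}(\Omega_i)$. On $\Omega_1\cap\Omega_2$ we then have $(G_1+G_2)'=g_1+g_2=f'$. Assuming $\Omega_1\cap\Omega_2$ is connected, $f-(G_1+G_2)$ is a constant $c$ there; since constant functions belong to $\mathcal{D}(\Omega_1)$ (their derivative is $0\in A^2(\Omega_1)$), the functions $f_1:=G_1+c\in\mathcal{D}(\Omega_1)$ and $f_2:=G_2\in\mathcal{D}(\Omega_2)$ satisfy $f=f_1+f_2$ on $\Omega_1\cap\Omega_2$, as required. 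The $A^p$/Besov analogue follows by the same scheme from the $A^p$ versions of the decomposition theorems.

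The only delicate point --- and the step I expect to be the main obstacle --- is the passage from the identity of derivatives $(f_1+f_2)'=f'$ to the genuine identity $f=f_1+f_2$: if $\Omega_1\cap\Omega_2$ is disconnected, $f-(G_1+G_2)$ is only \emph{locally} constant, equal to possibly distinct constants $c_j$ on the components $U_j$, and a single additive constant on $G_1$ no longer suffices. In every application in this paper $\Omega_1\cap\Omega_2$ is convex, hence connected, so the argument above applies directly; note moreover that, by a Mayer--Vietoris argument, if $\Omega_1$, $\Omega_2$ and $\Omega_1\cup\Omega_2$ are all simply connected then $\Omega_1\cap\Omega_2$ is automatically connected, so disconnectedness can occur only when $\Omega_1\cup\Omega_2$ has holes. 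For the fully general statement one would have to absorb the residual locally constant mismatch by exploiting the freedom $(g_1,g_2)\mapsto(g_1+h,\,g_2-h)$, where $h$ is holomorphic on $\Omega_1\cup\Omega_2$ with finite Bergman norm on each of $\Omega_1$ and $\Omega_2$, and choosing $h$ so that the periods of $h\,dz$ around the holes of $\Omega_1\cup\Omega_2$ prescribe the differences $c_j-c_1$; carrying out this bookkeeping cleanly is the part that requires the most care.
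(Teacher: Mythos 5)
Your proof is correct and follows essentially the same route as the paper's: differentiate, apply the $A^2$-decomposition, take single-valued primitives on each simply connected $\Omega_i$, and absorb the additive constant. Your explicit remark about possible disconnectedness of $\Omega_1\cap\Omega_2$ is a caveat the paper's argument also implicitly relies on (it integrates $f'$ along a path from a base point $z_0$ inside the intersection), and in all applications the intersection is convex, so nothing further is needed.
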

An application of this observation solves the control problem of the heat equation with Neumann boundary control.
\\

Finally, we emphasize that our proofs do not work for $p=1$. This situation already occurs in Aizenberg's result for the Smirnov space \cite{Ai}. While in his work it is the failure of boundedness of the Riesz projection which makes obstruction, here it is the Bergman projection which is not bounded on $L^1$. This leads to the following open question.
\\

\noindent \textbf{Question:} Is there a positive solution to the separation of singularities problem in $E^1$ and $A^1$?

\subsection{The reachable states of the 1-D heat equation}

Let us now turn to an important application of the separation of singularities in Bergman spaces: the control of the 1-D heat equation with boundary controls. 
More precisely, we consider the heat equation on the segment $[0, \, \pi]$ with Dirichlet boundary control at both ends. 
\begin{equation}
\label{HE}
\tag{HE}
\left\lbrace
	\begin{aligned}
		&\frac{\partial y}{\partial t}(t,x)-		\frac{\partial^2 y}{\partial x^2} = 0  \qquad &t >0, \ x\in (0,\pi),& \\
		&y(t,0)=u_0(t),\  \ y(t,\pi)=u_\pi(t) \qquad & t >0, &\\
		&y(0,x)= f(x) \qquad &x \in (0, \pi),
	\end{aligned}
\right.
\end{equation}
Let $(\T_t)_{t\geq 0}$ be the semigroup generated by the Dirichlet Laplacian on $(0, \pi)$.  
For any $u\!:=\! (u_0, u_\pi) \in L^2_{\mathrm{loc}}((0, +\infty), \C^2)$ --- the so-called input (or control) function --- and $f \in  X:=W^{-1,2}(0, \, \pi)$ (the dual of the Sobolev space $W_0^{1,2}(0, \, \pi)$), this equation admits a unique solution $y \in C\left((0, +\infty), X\right)$ (see \cite[Prop. 10.7.3]{TW}) defined by 
\begin{equation}
\label{sol}
\forall t >0, \ y(t, \cdot)= \T_tf + \Phi_t u
\end{equation}
where $\Phi_t \, \in \mathcal{L}(L^2([0,\tau], \C^2),X)$ is the controllability operator (see \cite[Prop. 4.2.5]{TW}). For $f \in X$ and $\tau >0$, we will say that $g \in X$ is reachable from $f$ in time $\tau$ if there exists a boundary control $u \in L^2((0,\tau), \C^2)$ such that the solution of $\eqref{HE}$ satisfies $y(\tau, \cdot)=g$. We denote by $\mathcal{R}^f_\tau$ the set of all reachable functions from $f$ in time $\tau$. 
\\

{\bf Main question: } Can we describe explicitely the set $\mathcal{R}^f_\tau$? \\

There is a large literature on this problem that we shall recall briefly here. 
We first mention that $\mathcal{R}^f_\tau$ does not depend on the initial condition $f \in X$, which means by \eqref{sol} that $\mathcal{R}^f_\tau$ is the linear space $\Ran \Phi_\tau$. So, it will be denoted by $\Ran \Phi_\tau$ from now on and called \emph{reachable space}. Secondly, it does not depend on time (see \cite{Fa}, \cite{Se}, or \cite[Remark 1.1]{HKT}). Finally, the functions in $\Ran \Phi_\tau$ can be extended analytically to the square $D=\enstq{z=x+iy \in \C}{\abs{x-\frac{\pi}{2}} + \abs{y} < \frac{\pi}{2}}$. 
The description of the reachable space started in the pioneering work \cite{FR} and has been improved successively in \cite{Sc}, \cite[Thm 1.1]{MRR}, \cite[Theorem 1.1]{DE}, \cite[Proposition 1.1, Thm 1.2]{HKT}, \cite{Or}; see also \cite{KNT} for a recent survey on this problem and some other related problems. 
The key result obtained in \cite{Or} is the following (see also \cite[Section 7]{KNT} for another proof given subsequently of this result).
\begin{thm}
\label{Or-result}
Let $\Delta=\enstq{z \in \C}{\abs{\arg{(z)}} < \pi/4}$.\\
We have $\mathrm{Ran}\Phi_\tau = A^2(\Delta) + A^2(\pi-\Delta)$.
\end{thm}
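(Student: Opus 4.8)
The plan is to deduce the result directly from the separation-of-singularities theorems established in the first part of the paper, together with the known sandwich and qualitative facts about $\Ran\Phi_\tau$ recalled in the introduction. First I would observe that the square $D=\{z=x+iy:\abs{x-\pi/2}+\abs{y}<\pi/2\}$ can be written as the intersection of the two sectors $\Delta=\{\abs{\arg z}<\pi/4\}$ and $\pi-\Delta=\{z:\pi-z\in\Delta\}$, whose opening angles are $\pi/2$ and whose vertices sit at the two endpoints $0$ and $\pi$ of the controlled interval; these are exactly the two right-angle sectors whose common region is the square having $[0,\pi]$ as a diagonal. Applying Corollary \ref{coro1} (equivalently Theorems \ref{thm2} and \ref{thm3}, suitably rotated and rescaled so that the quarter-plane $\C^{++}$ becomes $\Delta$ and $(1+i)-\C^{++}$ becomes $\pi-\Delta$) with $p=2$ gives the purely complex-analytic identity $A^2(D)=A^2(\Delta)+A^2(\pi-\Delta)$. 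So the content of the theorem reduces to identifying $\Ran\Phi_\tau$ with $A^2(D)$, or at least with the sum on the right-hand side.

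Next I would recall, citing \cite{HKT,Or}, the two inclusions that frame $\Ran\Phi_\tau$: on the one hand every reachable state extends holomorphically to $D$ and in fact lies in $A^2(D)$ (the Bergman space is the ``outer'' space of the sandwich in \cite[Thm 1.2]{HKT}), so $\Ran\Phi_\tau\subset A^2(D)=A^2(\Delta)+A^2(\pi-\Delta)$; on the other hand, by \cite[Thm 1.2]{Or} one already has the weighted/partial inclusion $A^2(\Delta)+A^2(\pi-\Delta)\subset \Ran\Phi_\tau$, which comes from analysing the controllability operator on each half-line separately (a control acting only at the left endpoint produces states in a Bergman space on a sector with vertex at that endpoint, and symmetrically for the right endpoint) and using the $\bar\partial$-estimates. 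Combining the two inclusions yields the asserted equality. The only genuinely new ingredient relative to \cite{Or} is that the earlier ``easy'' inclusion was established there only up to a vanishing-at-infinity weight $\omega_l$, and the present separation theorem removes that weight on the bounded square; here one uses that $D$ is bounded and that a polynomial non-vanishing on $\overline D$ is an invertible multiplier of $A^2(D)$, exactly as in the passage from Theorem \ref{thmCousin} to Corollary \ref{corCousin}.

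I expect the main obstacle to be the bookkeeping in the reduction from the abstract control-theoretic object $\Ran\Phi_\tau$ to the function-theoretic sum: one must be careful that the conformal/affine change of variables sending $\C^{++}$ to $\Delta$ is compatible with the normalisation of the heat semigroup on $[0,\pi]$ (the factor $\pi/2$ in the definition of $D$, and the reflection $z\mapsto\pi-z$ exchanging the two sectors), and that the Bergman-space identifications are isometric up to the conformal-invariance formula \eqref{confmap}. A secondary point requiring care is that the inclusion $\Ran\Phi_\tau\subset A^2(D)$ from \cite{HKT} must be invoked in its sharp ($A^2$, not merely $E^2$ or Hol$(D)$) form; granting that, and granting \cite[Thm 1.2]{Or} for the reverse inclusion, the proof is a short concatenation. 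I would therefore present it as: (1) rewrite $D$ as $\Delta\cap(\pi-\Delta)$; (2) invoke Corollary \ref{coro1} to get $A^2(D)=A^2(\Delta)+A^2(\pi-\Delta)$; (3) invoke \cite[Thm 1.2]{HKT} for $\Ran\Phi_\tau\subset A^2(D)$ and \cite[Thm 1.2]{Or} for the reverse inclusion; (4) conclude.
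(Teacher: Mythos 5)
The first thing to note is that the paper does not prove this statement at all: Theorem \ref{Or-result} is quoted verbatim from \cite{Or} (it is \cite[Theorem 1.1]{Or}), introduced by the sentence ``The key result obtained in \cite{Or} is the following''. So there is no internal proof to compare yours against; in the paper's architecture this theorem is an external input which, combined with the new Corollary \ref{coro1}, yields Corollary \ref{reach}.

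Your proposal runs that logic in the opposite direction: you take the upper bound $\Ran\Phi_\tau\subset A^2(D)$ from \cite{HKT}, the identity $A^2(D)=A^2(\Delta)+A^2(\pi-\Delta)$ from Corollary \ref{coro1}, and the lower bound $A^2(\Delta)+A^2(\pi-\Delta)\subset\Ran\Phi_\tau$ from \cite{Or}, and sandwich. As a piece of logic this is sound, and it is a fair observation that the unweighted separation theorem allows one to shortcut part of \cite{Or}'s original argument. But as a proof it has a genuine gap: the inclusion $A^2(\Delta)+A^2(\pi-\Delta)\subset\Ran\Phi_\tau$ is precisely the hard control-theoretic half of the statement --- one must actually produce $L^2$ boundary controls reaching every Bergman function on each sector --- and nothing in the present paper, whose new content is purely function-theoretic, supplies it. You also miscite it: \cite[Thm 1.2]{Or} is, according to the present paper, ``another kind of weighted separation theorem'', i.e.\ a statement about Bergman spaces, not about reachability; the reachability inclusion is established inside the proof of \cite[Thm 1.1]{Or}. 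In effect your argument reads ``Theorem \ref{Or-result} follows from half of Theorem \ref{Or-result}, plus \cite{HKT}, plus Corollary \ref{coro1}'', which is a useful consistency check but cannot replace the citation the paper relies on. A smaller point: your worry about conformal normalisations is unnecessary, since passing from the unit square of Corollary \ref{coro1} to $D=\Delta\cap(\pi-\Delta)$ is an affine map (rotation by $-\pi/4$, dilation and translation), under which Bergman spaces transform trivially, exactly as the paper notes when deriving \eqref{eq-square-decomp}.
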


By rotation and rescaling, Corollary \ref{coro1} immediately yields
\begin{equation}
\label{eq-square-decomp}
 A^2(D)=A^2(\Delta) + A^2(\pi-\Delta),
\end{equation}
which gives the following characterization of the reachable space. 
\begin{cor}
\label{reach}
We have $\mathrm{Ran} \Phi_\tau = A^2(D)$.
\end{cor}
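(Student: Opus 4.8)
The final statement is Corollary~\ref{reach}: $\operatorname{Ran}\Phi_\tau = A^2(D)$.

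\medskip

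The plan is to simply chain together the two preceding results in the excerpt. By Theorem~\ref{Or-result} (the key result from \cite{Or}), the reachable space is $\operatorname{Ran}\Phi_\tau = A^2(\Delta) + A^2(\pi - \Delta)$, where $\Delta = \{z \in \C : |\arg z| < \pi/4\}$ is the right-angle sector bisected by the positive real axis, and $\pi - \Delta$ is its reflection placed so that $\Delta \cap (\pi-\Delta)$ is exactly the square $D$ having $[0,\pi]$ as a diagonal. So it suffices to establish the identity \eqref{eq-square-decomp}, namely $A^2(D) = A^2(\Delta) + A^2(\pi-\Delta)$, and combine.

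\medskip

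For \eqref{eq-square-decomp}, the point is that $D$ is the intersection of two right-angle sectors, which is precisely the situation handled by Corollary~\ref{coro1} (itself a consequence of Theorems~\ref{thm2} and~\ref{thm3}): there, with $\Omega = \{x+iy : 0<x<1,\ 0<y<1\}$, $\Omega_1 = \C^{++}$ and $\Omega_2 = (1+i) - \C^{++}$, one has $A^p(\Omega) = A^p(\C^{++}) + A^p((1+i)-\C^{++})$. I would take $p=2$ and then transport this identity to the present configuration by an affine change of variable: there is a similarity $\psi(z) = az + b$ of $\C$ (a rotation by $\pi/4$, a rescaling by $\sqrt{2}\,\pi$ or the appropriate factor, and a translation) carrying the unit square $\Omega$ with axis-parallel sides onto the square $D$ with $[0,\pi]$ as diagonal, and simultaneously carrying $\C^{++}$ onto $\Delta$ (after translating $0$ to an endpoint of the diagonal) and $(1+i)-\C^{++}$ onto $\pi - \Delta$. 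Composition with an affine map $\psi$ is an isometric isomorphism $A^2(\psi(U)) \to A^2(U)$ up to the constant factor $|a|^2$ coming from the Jacobian (or one works with the weighted versions, which the paper notes hold for the weights $\omega_l$; but here the plain unweighted statement of Corollary~\ref{coro1} already suffices since all three domains are images of the ones in that corollary under the \emph{same} similarity). Hence the decomposition $A^2(\Omega) = A^2(\C^{++}) + A^2((1+i)-\C^{++})$ pulls back/pushes forward to $A^2(D) = A^2(\Delta) + A^2(\pi-\Delta)$, which is \eqref{eq-square-decomp}. Chaining with Theorem~\ref{Or-result} gives $\operatorname{Ran}\Phi_\tau = A^2(\Delta) + A^2(\pi-\Delta) = A^2(D)$, as claimed.

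\medskip

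There is essentially no obstacle here: the corollary is a formal consequence of deep results proved earlier (Theorem~\ref{Or-result} from \cite{Or} and Theorem~\ref{thm3}/Corollary~\ref{coro1} of the present paper). The only thing to be careful about is bookkeeping of the affine normalization --- checking that the rotation by $\pi/4$ and the rescaling indeed send the unit axis-parallel square onto the square $D$ with diagonal $[0,\pi]$ and the two quarter-planes onto $\Delta$ and $\pi-\Delta$ respectively --- and noting that conformal (here affine) invariance of Bergman spaces makes the set-theoretic sum decomposition invariant under such a map. This is the content of the phrase ``by rotation and rescaling'' already used in the excerpt just before \eqref{eq-square-decomp}.
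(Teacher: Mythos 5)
Your proposal is correct and follows exactly the paper's own route: the authors also obtain Corollary~\ref{reach} by combining Theorem~\ref{Or-result} with the identity $A^2(D)=A^2(\Delta)+A^2(\pi-\Delta)$, which they deduce from Corollary~\ref{coro1} ``by rotation and rescaling'' --- precisely the affine transport you spell out (the similarity $z\mapsto \pi e^{-i\pi/4}z/\sqrt{2}$ sends the unit square, $\C^{++}$ and $(1+i)-\C^{++}$ onto $D$, $\Delta$ and $\pi-\Delta$). Your bookkeeping of the affine change of variables is sound, so nothing further is needed.
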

This proves the conjecture stated in \cite[Remark 1.3]{HKT}.
This implies also obviously the following inclusion.
\begin{cor}
\label{embedding}
We have $A^2(D) \subset X:=W^{-1,2}(0, \pi)$. 
\end{cor}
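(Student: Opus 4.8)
The plan is to read off the inclusion from Corollary~\ref{reach} together with the very construction of the controllability operator. Recall from \eqref{sol} that for every $f\in X$ and every control $u\in L^2((0,\tau),\C^2)$ the solution of \eqref{HE} is $y(t,\cdot)=\T_tf+\Phi_tu$, and that $\Phi_\tau$ is by definition a bounded operator $L^2((0,\tau),\C^2)\to X$ (see \cite[Prop.~4.2.5]{TW}); equivalently, by \cite[Prop.~10.7.3]{TW} one has $y\in C((0,+\infty),X)$, so $y(\tau,\cdot)\in X$ and, since $\T_\tau f\in X$ as well, $\Phi_\tau u=y(\tau,\cdot)-\T_\tau f\in X$. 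In either form this gives $\Ran\Phi_\tau\subseteq X$. By Corollary~\ref{reach}, $\Ran\Phi_\tau=A^2(D)$, whence $A^2(D)\subseteq X$.

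The only point deserving a word is the identification under which the inclusion is asserted: a function in $A^2(D)$ is viewed as an element of $X=W^{-1,2}(0,\pi)$ through its restriction to the diagonal $(0,\pi)$ of the square $D$ (note that $(0,\pi)$ lies in the interior of $D$, so this is literally the restriction of a function holomorphic on $D$). A priori such a restriction need not even lie in $L^2(0,\pi)$, since the endpoints $0$ and $\pi$ are corners of $D$ on the diagonal at which a Bergman function may blow up; that it always lands in $X$ is precisely the combination of $\Ran\Phi_\tau=A^2(D)$ with $\Ran\Phi_\tau\subseteq X$, so nothing further is needed. I do not expect any obstacle here.

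For completeness I would also sketch a direct complex-analytic proof, which exhibits the mechanism. By \eqref{eq-square-decomp} and the symmetry $z\mapsto\pi-z$ it suffices to show that the restriction to $(0,\pi)$ of an arbitrary $g\in A^2(\Delta)$ lies in $W^{-1,2}(0,\pi)$. On a compact subinterval $[\varepsilon,\pi-\varepsilon]$, subharmonicity of $|g|^2$ and the mean value inequality over a fixed-radius disk contained in $\Delta$ give a uniform bound $|g|\lesssim_\varepsilon\|g\|_{A^2(\Delta)}$, so this middle part lies in $L^2\subseteq W^{-1,2}$. Near the corner $0$, averaging $|g|^2$ over the disk centred at $x\in(0,\varepsilon)$ of radius comparable to $\dist(x,\partial\Delta)\asymp x$ yields the pointwise estimate $|g(x)|\lesssim x^{-1}\|g\|_{A^2(\Delta)}$; since $x^{-1}=(\log x)'$ with $\log x\in L^2(0,\varepsilon)$, one gets $g|_{(0,\varepsilon)}\in W^{-1,2}(0,\varepsilon)$ (and this pairs correctly against functions in $W_0^{1,2}$, which vanish at $0$). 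The main—and only mildly technical—step in this route is gluing the local $W^{-1,2}$ estimates near $0$ and near $\pi$ with the $L^2$ estimate in between into a single statement about the dual of $W_0^{1,2}(0,\pi)$, which is routine via a partition of unity in this one-dimensional setting.
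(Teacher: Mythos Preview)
Your main argument is correct and matches the paper's approach exactly: the paper treats Corollary~\ref{embedding} as an immediate consequence of Corollary~\ref{reach} together with the fact that $\Phi_\tau$ takes values in $X$, and gives no further details. Your first paragraph spells this out precisely.

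The additional direct complex-analytic sketch you offer is not in the paper and is a genuine alternative route. One small slip there: once you have reduced to $g\in A^2(\Delta)$, the only problematic endpoint on $(0,\pi)$ is $0$ (the vertex of $\Delta$); the point $\pi$ lies well inside $\Delta$, so $g$ is bounded near it and no separate $W^{-1,2}$ estimate ``near $\pi$'' is needed for this piece. The issue near $\pi$ arises only for the companion term in $A^2(\pi-\Delta)$, which you have already dispatched by symmetry. With that adjustment the sketch is sound.
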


The paper is organized as follows. In Section \ref{sec2} we prove the separation of singularities results, and in Section \ref{sec3} we give a more transparent proof to Corollary \ref{coro1} and apply the results to several related problems on reachable spaces of the heat equation. 

\section{Proof of theorems \label{sec2}}
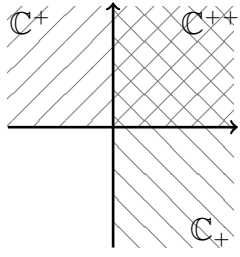
\begin{figure}[h]
\begin{center}
\begin{tikzpicture}[scale=0.2]


\fill[line space=10pt, pattern=my north west lines, pattern color=gray] (0,8) -- (8,8) -- (8,-8) -- (0,-8) -- cycle ;
\fill[line space=10pt, pattern=my north east lines, pattern color=gray] (-7,0.) -- (8,0) -- (8,8) -- (-7,8) -- cycle ;

\draw  [line width=1.pt,->](-7,0)-- (8.3,0);
\draw  [line width=1.pt,->](0,-8)-- (0,8.3);

\draw[color=black] (-5.5,7) node[xscale=1, yscale=1] {$\C^{+}$};
\draw[color=black] (6.5,7) node[xscale=1, yscale=1] {$\C^{++}$};
\draw[color=black] (6.5,-7) node[xscale=1, yscale=1] {$\C_{+}$};
\end{tikzpicture}
\caption{\label{figure1} The half planes $\C^{+}$ and $\C_{+}$, and their intersection, the quarter plane $\C^{++}$.}
\end{center}
\end{figure}
\begin{proof}[Proof of Theorem \ref{thm1}]

Obviously we only have to show the reverse inclusion. So, let us start with $f \in A^2(\C^{++})$. 
Using the conformal invariance property \eqref{confmap} applied to the kernel on $\D$ introduced in \eqref{repkerD} and using the conformal map $\varphi:\C^+\to \D$,
$$
 \varphi(z)=\frac{z-i}{z+i},
$$ 
we obtain first the  reproducing kernel on $\C^+$,
\begin{equation*}
k_\lambda^{\C^+}(z) = \frac{-1}{\pi (z-\bar{\lambda})^2}, \quad\lambda, z \in \C^+.
\end{equation*}
The kernel for $\C_+$ is deduced from this just by a suitable rotation
\begin{equation*}
k_\lambda^{\C_+}(z) = \frac{1}{\pi (z+\bar{\lambda})^2}, \quad \lambda, z \in \C_+ .
\end{equation*}
Finally, for the kernel on the quarter plane $\C^{++}$, use $\varphi:\C^{++}\to C_+$, 
$\varphi(z)=z^2$ to get
\begin{equation*}
k_\lambda^{\C^{++}}(z) = \frac{-4z \bar{\lambda}}{\pi (z^2-\bar{\lambda}^2)^2}, \quad\lambda, z \in \C^{++} 
\end{equation*} 
An easy computation leads to the following key observation:
\begin{equation}
\label{reproducing}
k_\lambda^{\C^+} + k_\lambda^{\C_+} = k_\lambda^{\C^{++}}, \qquad \forall \lambda \in \C^{++}.
\end{equation}

For a function $f$ defined on $\C^{++}$, we write $S_{\C^+} f$ (resp. $S_{\C_+} f$) for the trivial extension of $f$ by $0$ on $\C^+$ (resp. $\C_+$) outside $\C^{++}$, i.e. 
$$S_{\C^+} f(z)=
\begin{cases}
f(z) \qquad \text{if } z \in \C^{++} \\
0  \qquad \text{if } z \in \C^+ \setminus \C^{++} 
\end{cases}$$
and correspondingly for $S_{\C_+}f$. 
Hence, since $f$ was assumed in $A^2(\C^{++})$, for every $\lambda \in \C^{++}$, we have 
\begin{align*}
f(\lambda)={\prodscal{f}{ k_\lambda^{\C^{++}}}}_{L^2(\C^{++})} &=\int_{\C^{++}}f(z)\overline{ k_\lambda^{\C^{++}}(z)} dA(z) \\
&= \int_{\C^{++}}f(z) \overline{ \left(k_\lambda^{\C^+}(z) + k_\lambda^{\C_+}(z)\right)} dA(z) \\
&= \int_{\C^+}S_{\C^+} f(z) \overline{ k_\lambda^{\C^+}(z)} dA(z) + \int_{\C_+}S_{\C_+} f(z) \overline{ k_\lambda^{\C_+}(z)} dA(z) \\
&={\prodscal{S_{\C^+} f}{ k_\lambda^{\C^+}}}_{L^2(\C^+)} + {\prodscal{S_{\C_+} f}{ k_\lambda^{\C_+}}}_{L^2(\C_+)}.
\end{align*}
Finally, using the Bergman projection introduced in \eqref{proj}, we obtain on $\C^{++}$
\begin{equation}
\label{eq}
f = \mathrm{P}_{\C^+}\left(S_{\C^+} f\right) + \mathrm{P}_{\C_+} \left(S_{\C_+} f\right) \ \in \  A^2(\C^+)+A^2(\C_+).
\end{equation}
The result follows. 

Consider the case $p\neq 2$. Pick $f \in A^p(\C^{++})$. 
Since $k_\lambda^{\C^+}$ (resp. $k_\lambda^{\C_+}$) belongs to $L^q(\C^+)$ (resp. $L^q(\C_+)$) for $q>1$, the right hand side is well-defined for $f \in L^p(\C^{++})$ if $1 \leq p < \infty$. 
In addition, it is well-known (see \cite[Thm. 1.34]{BBGNPR}) that $\mathrm{P}_{\C^+}$ (resp. $\mathrm{P}_{\C_+}$) is bounded from $L^p(\C^+)$ (resp. $L^p(\C_+)$) onto $A^p(\C^+)$ (resp. $A^p(\C_+)$) if and only if $p>1$. Finally, equality \eqref{eq} holds for all $f \in A^2(\C^{++}) \cap A^p(\C^{++})$ and this last space is dense in $A^p(\C^{++})$ (see Remark \ref{rmkdensity} below), hence by continuity it holds also for every $f \in A^p(\C^{++})$. The proof is complete.
\end{proof}

It should be pointed out that the above argument yields a linear bounded separation operator.

\begin{rmk}
\label{rmkdensity}
For a general open set $\Omega \subset \C$, the density of $A^2(\Omega) \cap A^p(\Omega)$ in $A^p(\Omega)$ is a difficult problem (see \cite[Proposition 2.2]{He}) but in our specific case it follows from \cite[Proposition 1.17]{BBGNPR} who showed the result for $\Omega=\C^{+}$. Indeed, moving back and forth between $A^p(\C^+)$ 
and $A^p(\C^{++})$ {\it via} the change of variables fomula $T_pF(z)=z^{2/p}F(z^2)$, $F\in A^p(\C^+)$, $z\in \C^{++}$, will produce the desired result. More precisely, in order to approximate a function $F\in A^p(\C^+)$ by $A^2(\C^+)$-functions the authors of \cite{BBGNPR} introduce the following regularization of $F$ by shifting and multiplying with a suitable function: $F_{\varepsilon,\alpha}(z)=F(z+i\varepsilon)G_{\alpha}(\varepsilon z)$, where $G_{\alpha}(z)=(1-iz)^{-(2+\alpha)}$, $\alpha\ge 0$ and $\varepsilon>0$. Clearly $F_{\varepsilon,\alpha}(z)\to F(z)$, when $\varepsilon\to 0$, for every $z\in \C^+$. As observed in \cite[Proposition 1.3]{BBGNPR}, the function $F(z+i\varepsilon)$ is in the Hardy space of the upper half plane $H^p(\C^+)$ which allows an application of the dominated convergence theorem (actually to horizontal $p$-means of $F_{\varepsilon,\alpha}$) when $\varepsilon\to 0$, which yields that $F_{\varepsilon,\alpha}\to F$ in $A^p(\C^+)$. Consequently $f_{\epsilon,\alpha}:=T_pF_{\epsilon,\alpha}\to f$ in $A^p(\C^{++})$. It remains to prove that $f_{\varepsilon,\alpha}\in A^2(\C^{++})$, i.e. $$ f_{\varepsilon,\alpha}=T_pF_{\epsilon,\alpha} = z^{2/p}F(z^2+i\varepsilon)G_{\alpha}(\varepsilon z^2)= \frac{z^{2/p-1}}{(1-i\varepsilon z^2)^{\alpha}} T_2F_{\varepsilon,0}(z)$$ is in $A^2(\C^{++})$. Since $F_{\varepsilon,0}\le C_{\varepsilon}/(1+|z|)^2$, we have $F_{\varepsilon,0}\in A^2(\C^+)$, and so $f_{\varepsilon,\alpha}\in A^2(\C^{++})$ can now be reached by an appropriate choice of $\alpha$ depending on $p$ (note that $T_2F_{\varepsilon,0}$ is locally bounded at $0$, so that only the case $1<p<2$ needs consideration of a suitable $\alpha$).
\end{rmk}
 
\begin{rmk}
\label{rmk1}
Obviously, the theorem holds for every half plane the intersection of which is a right-angle sector. \end{rmk}

We will now move on to the proof of Theorem \ref{thm2}. 
For $-\pi \leq a < b \leq \pi$, we denote by $\Delta_a^b$ the angular sector $\Delta_a^b= \enstq{z \in \C}{a < \arg(z) <b}$. 

\begin{figure}[h]
\begin{center}
\begin{tikzpicture}[scale=0.7]

\fill[line space=10pt, pattern=my north west lines, pattern color=gray] (-2,4) -- (0,0) -- (6,2) -- (6,4) -- cycle ;
\draw  [line width=1.pt](0,0)-- (6,2);
\draw  [line width=1.pt](0,0) -- (-2,4);


\draw  [line width=1.pt,->](-2,0)-- (6.2,0);
\draw  [line width=1.pt,->](0,-3)-- (0,5);

\draw[color=black] (5,2.2) node[xscale=1, yscale=1] {$\Delta_a^{b}$};
\draw[color=black] (1.5,0.2) node[xscale=1, yscale=1] {$a$};
\draw[color=black] (1.7,2) node[xscale=1, yscale=1] {${b}$};
\draw[color=black](2,0) arc(0:18:2);
\draw[color=black](3,0) arc(0:117:3);
\end{tikzpicture}
\caption{\label{figsecteurs} The sector $\Delta_a^b$.}
\end{center}
\end{figure}
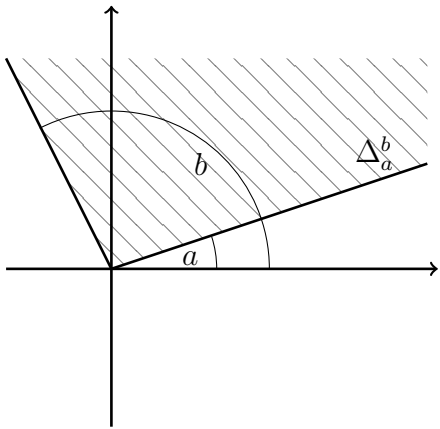

We can of course reduce the situation to the case $a=0$, and consider $\Delta_0^b=\C^+\cap H_1$ where $H_1$ is the half plane $\Delta_{b-\pi}^b$. 
While it is very tempting now to apply the same idea above to an arbitrary sector $\Delta_0^b$, the magic decomposition of the reproducing kernel on the right-angle sector breaks down. Indeed, with formula \eqref{confmap} in mind one can of course explicitely compute the kernel for $H_1$ which amounts essentially to multiply $z$ and $\lambda$ by a suitable unimodular constant $\alpha$ (more precisely $\alpha=e^{i(\pi-b)}$) in the expression of $k_\lambda^{\C^+}(z)$. However, the same transformation formula \eqref{confmap} applied to transform the kernel of $\C^{+}$ to that of $\Delta_0^b$ involves a power function: $\varphi(z)=z^{\pi/b}$. A computation shows that the kernels of the half planes do not add up to the kernel of the sector.


\begin{proof}[Proof of Theorem \ref{thm2}]
As already mentioned it is enough to prove the result for $\Sigma = \Delta_0^b$, $H_1=\C^+$ and $H_2=\C_\theta:=\Delta_{-\pi+b}^b$, with $0 < b < \pi$: 
\begin{eqnarray}\label{eq7}
 A^p(\Delta_0^{b})=A^p(\C^+)+A^p(\Delta_{-\pi+b}^{b}).
\end{eqnarray}
The heart of the proof is contained in the following lemma which shows in a way that we can double the opening of the sector. 

\begin{lem}\label{DecompLemma}
Let $a, b$ be real numbers such that $-\pi \leq a < b \leq \pi$.
Then $A^p(\Delta_a^b)=A^p\left(\Delta_{a}^{\min(\pi, \, 2b-a)}\right) + A^p\left(\Delta_{\max(-\pi, \, 2a-b)}^{b}\right)$.
\end{lem}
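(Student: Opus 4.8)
The plan is to transport the statement, through a conformal power map of sectors, to the single case already settled in Theorem~\ref{thm1} (or Remark~\ref{rmk1}). Write $\Sigma_1:=\Delta_a^{\min(\pi,2b-a)}$ and $\Sigma_2:=\Delta_{\max(-\pi,2a-b)}^{b}$. Inspecting arguments, $\Sigma_1\cap\Sigma_2=\Delta_a^b$ and $\Delta_a^b\subseteq\Sigma_i$, so the inclusion $A^p(\Sigma_1)+A^p(\Sigma_2)\subseteq A^p(\Delta_a^b)$ is immediate by restriction; only the reverse inclusion requires an argument. If $a=-\pi$ then $\Sigma_2=\Delta_a^b$ and there is nothing to prove, and similarly if $b=\pi$; so I assume $-\pi<a<b<\pi$ and put $\beta:=b-a\in(0,2\pi)$.

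The key step is to apply the conformal map $\varphi(z):=e^{i\theta_0}z^{\pi/(2\beta)}$, $\theta_0:=-\pi a/(2\beta)$, with the branch of the power function associated to $\arg z\in(\max(-\pi,2a-b),\min(\pi,2b-a))$. A short case distinction --- the only mildly delicate configuration being the doubly capped one, $2b-a>\pi$ and $2a-b<-\pi$, which forces $\beta>2\pi/3$ --- shows that this arc of arguments is scaled by $\pi/(2\beta)$ into an arc of length $<2\pi$, so that $\varphi$ is a well-defined conformal map on $\Sigma_1\cup\Sigma_2$. Tracking the bounding rays, $\varphi$ sends $\Delta_a^b$ onto the quarter plane $\C^{++}=\Delta_0^{\pi/2}$, sends $\Sigma_1$ onto $\Delta_0^{\gamma}$ with $\gamma=\tfrac{\pi}{2\beta}\min(\pi-a,2\beta)\in(\pi/2,\pi]$, and sends $\Sigma_2$ onto $\Delta_{-\delta}^{\pi/2}$ with $\delta=\tfrac{\pi}{2\beta}\min(\pi+a,\beta)\in(0,\pi/2]$. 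For a conformal $\psi$, the operator $T_\psi g:=(g\circ\psi)\,(\psi')^{2/p}$ is a linear isomorphism between the relevant $A^p$-spaces ($1<p<\infty$) that commutes with restriction to subdomains, $\varphi'$ being holomorphic and zero-free on the simply connected set $\Sigma_1\cup\Sigma_2$; hence it suffices to prove
\[
 A^p(\C^{++})=A^p(\Delta_0^{\gamma})+A^p(\Delta_{-\delta}^{\pi/2}),\qquad \gamma\in(\pi/2,\pi],\ \delta\in(0,\pi/2].
\]

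Here one inclusion is again trivial, since both sectors on the right contain $\C^{++}$, and for the other I would simply invoke Theorem~\ref{thm1}: given $f\in A^p(\C^{++})$, write $f=g_1+g_2$ with $g_1\in A^p(\C^+)=A^p(\Delta_0^\pi)$ and $g_2\in A^p(\C_+)=A^p(\Delta_{-\pi/2}^{\pi/2})$. Because $\gamma\le\pi$ one has $\Delta_0^{\gamma}\subseteq\Delta_0^\pi$, hence $g_1|_{\Delta_0^{\gamma}}\in A^p(\Delta_0^{\gamma})$; because $\delta\le\pi/2$ one has $\Delta_{-\delta}^{\pi/2}\subseteq\Delta_{-\pi/2}^{\pi/2}$, hence $g_2|_{\Delta_{-\delta}^{\pi/2}}\in A^p(\Delta_{-\delta}^{\pi/2})$; and on $\C^{++}=\Delta_0^{\gamma}\cap\Delta_{-\delta}^{\pi/2}$ these two restrictions still add up to $f$. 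Pulling this decomposition back through $T_\varphi$ writes the original $f\in A^p(\Delta_a^b)$ as $f_1+f_2$ with $f_i\in A^p(\Sigma_i)$. Thus the whole content of the lemma is the geometric bookkeeping of this reduction --- checking that the chosen branch of the power map is admissible in all cases and that $\gamma,\delta$ fall in the stated ranges, so that the concluding restriction step is legitimate --- and I expect this bookkeeping, rather than any analytic difficulty, to be the only real obstacle; once the picture is in place the statement is nothing but Theorem~\ref{thm1} read through a conformal change of variables.
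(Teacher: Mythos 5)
Your proposal is correct and follows essentially the same route as the paper: the same power map $\varphi(z)=(e^{-ia}z)^{\pi/(2(b-a))}$ sending $\Delta_a^b$ onto $\C^{++}$, an application of Theorem \ref{thm1} there, and a pull-back of the two pieces to the enlarged sectors (the paper phrases this as analytic continuation of $\varphi$ to $\Sigma_1$ and $\Sigma_2$ with $\varphi(\Sigma_1)\subset\C^+$, $\varphi(\Sigma_2)\subset\C_+$, which is exactly your restriction-then-pull-back step). Your additional bookkeeping --- injectivity of $\varphi$ on $\Sigma_1\cup\Sigma_2$ and the explicit ranges of $\gamma,\delta$ --- is a correct and slightly more detailed account of what the paper asserts tersely about the choice of branch cut.
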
 

\begin{proof}
Let $\varphi : \Delta_a^b \to \C^{++}$ be the conformal mapping given by $\varphi(z)= (e^{-ia}z)^{\frac{\pi}{2(b-a)}}$ where we have chosen the branch cut to be $(-\infty, \, 0]$. 
Let $T : A^p(\C^{++}) \to A^p(\Delta_a^b)$ be the isometric isomorphism associated with $\varphi$, i.e 
$$\forall g \in A^p(\C^{++}), \ Tg=(g \circ \varphi) (\varphi')^{2/p}.$$
Pick $f \in A^p(\Delta_a^b)$. Then $g:= T^{-1}f$ belongs to $A^p(\C^{++})$. So, by Theorem \ref{thm1}, there exist $g_1 \in A^p(\C^+)$ and $g_2 \in A^p(\C_+)$ such that $g={g_1}+ {g_2}$ on $\C^{++}$. Hence 
$$
 f=Tg=Tg_1+Tg_2=(g_1 \circ \varphi) (\varphi')^{2/p} + (g_2 \circ \varphi) (\varphi')^{2/p}.
$$ 
Remark now that the branch cut has been choosen such that $\varphi$ continues analytically on $\Delta_{a}^{\min(\pi, \, 2b-a)}$ and $\Delta_{\max(-\pi, \, 2a-b)}^{b}$. Moreover, $\varphi(\Delta_{a}^{\min(\pi, \, 2b-a)}) \subset \C^+$ and $\varphi(\Delta_{\max(-\pi, \, 2a-b)}^{b}) \subset \C_+$, so that $g_1\circ \varphi$ and $g_2\circ\varphi$ are well defined holomorphic functions. Thus, $f \in A^p\left(\Delta_{a}^{\min(\pi, \, 2b-a)}\right) + A^p\left(\Delta_{\max(-\pi, \, 2a-b)}^{b}\right)$, which proves the lemma.
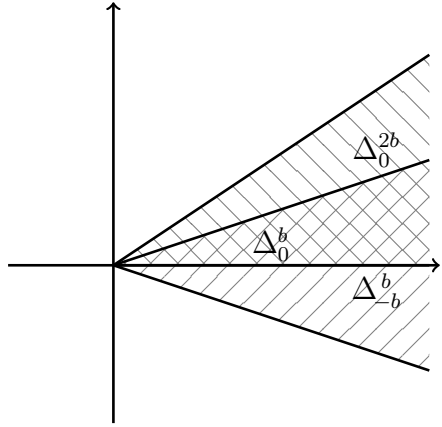
\begin{figure}[h]
\begin{center}
\begin{tikzpicture}[scale=0.7]

\fill[line space=10pt, pattern=my north west lines, pattern color=gray] (0,0) -- (6,4) -- (6,0)--  cycle ;
\fill[line space=10pt, pattern=my north east lines, pattern color=gray] (6,2) -- (0,0) -- (6,-2) -- cycle ;
\draw  [line width=1.pt](0,0)-- (6,2);
\draw  [line width=1.pt](0,0) -- (6,4);
\draw  [line width=1.pt](0,0) -- (6,-2);
\draw  [line width=1.pt](0,0) -- (6,0);


\draw  [line width=1.pt,->](-2,0)-- (6.2,0);
\draw  [line width=1.pt,->](0,-3)-- (0,5);

\draw[color=black] (3,0.4) node[xscale=1, yscale=1] {$\Delta_0^b$};
\draw[color=black] (5,2.2) node[xscale=1, yscale=1] {$\Delta_0^{2b}$};
\draw[color=black] (5,-0.5) node[xscale=1, yscale=1] {$\Delta_{\!-b}^{\, b}$};
\end{tikzpicture}
\caption{\label{figsecteurs2} One step of the induction.}
\end{center}
\end{figure}
\end{proof}

We are now in a position to prove the theorem. We start from $A^p(\Delta_0^\theta)$, where we assume for the moment that $\pi/2<\theta<\pi$. 
Since $\min(\pi,2\theta)=\pi$, the lemma yields
$$
 A^p(\Delta_0^{\theta})=A^p(\Delta_0^{\pi})+A^p(\Delta_{-\theta}^{\theta}) 
$$
so that $f\in A^p(\Delta_0^{\theta})$ decomposes as $f=f_1+f_2$ (considered on $\Delta_0^{\theta}$) with $f_1\in A^p(\Delta_0^{\pi})=A^p(\C^+)$ and $f_2\in A^p(\Delta_{-\theta}^{\theta})$. Since $\pi/2<\theta$ we have $\Delta_{-\theta}^{\theta}\supset \Delta_{\theta-\pi}^{\theta}$ which yields \eqref{eq7}.

We will proceed by an inductive application of the lemma.
In order to better understand this induction, let us also illustrate the case when
$\pi/4<\theta<\pi/2$. In order to not overcharge notation we will only mention the underlying sectors and not write out the Bergman spaces, see Figure \ref{decompSectors}.

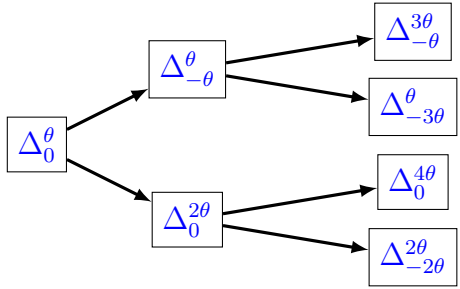
\begin{figure}[h]
\begin{center}
\begin{tikzpicture}
\tikzstyle{quadri}=[rectangle,draw,text=blue]
 \tikzstyle{estun}=[->,>=latex,very thick]
 \node[quadri] (13) at (-2,0) { $\Delta_0^{\theta}$ };
 \node[quadri] (X11) at (0,1) {$\Delta_{-\theta}^{\theta}$};
 \node[quadri] (X12) at (0,-1) {$\Delta_{0}^{2\theta}$};
 \node[quadri] (X21) at (3,1.5) {$\Delta_{-\theta}^{3\theta}$};
 \node[quadri] (X22a) at (3,0.5) {$\Delta_{-3\theta}^{\theta}$};
 \node[quadri] (X22b) at (3,-0.5) {$\Delta_{0}^{4\theta}$};
 \node[quadri] (X23) at (3,-1.5) {$\Delta_{-2\theta}^{2\theta}$};
  \draw[estun] (13)--(X11)node[midway,above]{};
  \draw[estun] (13)--(X12)node[midway,below]{};  
  \draw[estun] (X11)--(X21)node[midway,above]{};
  \draw[estun] (X11)--(X22a)node[midway,below]{};  
   \draw[estun] (X12)--(X22b)node[midway,above]{};
  \draw[estun] (X12)--(X23)node[midway,below]{}; 
%
%
\end{tikzpicture}
\caption{\label{decompSectors} Iterrative applications of Lemma \ref{DecompLemma} in the decomposition of the sector $\Delta_0^{\theta}$.}
\end{center}
\end{figure}

It is clear from here that after $n$ steps of applications of the lemma, there are $2^n$
sectors $\Delta_{-k\theta}^{(2^n-k)\theta}$, $k=0,\ldots, 2^n-1$ (when $(2^n-k)\theta>\pi$ or $k\theta>\pi$ it should be replaced by $\pi$).

In the general case, let $N \in \N^*$ be the least natural number  such that $2\pi < 2^N \theta $.
Observe that when $0\le k\le 2^{N-1}$ (which corresponds to one half of the possible $k$'s), then
$(2^N-k)\theta\ge (2^N-2^{N-1})\theta=2^{N-1}\theta>\pi$, so that $\Delta_{-k\theta}^{(2^N-k)
\theta}\supset \C^+$, while for $2^{N-1}<k\le 2^N-1$, we have $k\theta>\pi$ so that
$\Delta_{-k\theta}^{(2^N-k)\theta}\supset \Delta_{-\pi}^{\theta}\supset \Delta_{\theta-\pi}^{\theta}$. (We mention again that as soon as $(2^N-k)\theta>\pi$ or $k\theta>\pi$ in the procedure, it should be replaced by $\pi$.) 

Hence, any function $f\in A^p(\Delta_0^{\theta})$ will be decomposed into $2^N$ functions, one half of which is in $A^p(\C^+)$ and the other one in $A^p(\Delta_{-\pi+\theta}^{\theta})$.
\end{proof}

The next result claims that  Theorem \ref{thm2}  also holds for Bergman spaces with the same weight $\omega_N(z)=(1+|z|^{2p})^{-N}$ (see \eqref{weight}).
\begin{cor}
\label{weighted}
Let $H_1, H_2$ be two half planes such that $S_\theta:=H_1 \cap H_2 \neq \emptyset$ is an angular sector. For any $N \in \N$, we have 
$A^p(S_\theta, \omega_N)=A^p(H_1, \omega_N)  + A^p(H_2, \omega_N).$
\end{cor}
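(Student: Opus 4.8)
The plan is to reduce the weighted statement to the unweighted Theorem \ref{thm2} by a multiplier trick, exactly in the spirit of how Corollary \ref{corCousin} is deduced from Theorem \ref{thmCousin}. The key observation is that on an angular sector $S_\theta$ the weight $\omega_N(z)=(1+|z|^{2p})^{-N}$ is comparable to $|g(z)|^{-p}$ for a suitable holomorphic function $g$ which is an invertible multiplier between the weighted and unweighted Bergman spaces on each of the three domains $S_\theta, H_1, H_2$ simultaneously. Concretely, after a rotation we may assume $S_\theta = \Delta_0^\theta$ with $0<\theta<\pi$, $H_1=\C^+$ (or a half plane containing $S_\theta$), $H_2=\Delta_{-\pi+\theta}^\theta$; I would look for $g(z) = (z+ic)^{2N}$ for an appropriate constant $c$ with $\mathrm{Re}(ic)$ chosen so that $z+ic$ does not vanish on $\overline{H_1}\cup\overline{H_2}$ (for instance picking the direction of $ic$ inside $S_\theta$, so $z+ic$ avoids the closed complement of $S_\theta$, hence also of $H_1$ and $H_2$ after we check the geometry). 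Then $|g(z)|\asymp (1+|z|)^{2N}$ on all of $\overline{H_1}\cup\overline{H_2}$, so $\omega_N(z)\asymp |g(z)|^{-p}$ there.

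Granting such a $g$, the argument runs as follows. Multiplication by $g$ is an isomorphism $A^p(\Omega,\omega_N)\to A^p(\Omega)$ for each $\Omega\in\{S_\theta,H_1,H_2\}$: if $f\in A^p(\Omega,\omega_N)$ then $\int_\Omega |f g|^p\,dA \asymp \int_\Omega |f|^p |g|^{-p}\,|g|^{2p}\cdot|g|^{-p}$... more simply $\int_\Omega |fg|^p \asymp \int_\Omega |f|^p\omega_N^{-1}|g|^{-p}\cdot$; let me just say $|fg|^p = |f|^p|g|^p \asymp |f|^p (1+|z|)^{2Np}$, which is \emph{not} what we want — so instead take $g$ to be the \emph{reciprocal-type} multiplier: set $h(z)=(z+ic)^{-2N}$, which is bounded and holomorphic on $\overline{H_1}\cup\overline{H_2}$ with $|h(z)|\asymp(1+|z|)^{-2N}$, hence $|h(z)|^p\asymp\omega_N(z)$. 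Then $f\mapsto f/h$ maps $A^p(\Omega,\omega_N)$ isomorphically onto $A^p(\Omega)$ since $|f/h|^p\asymp |f|^p\omega_N^{-1}$... again the exponent is backwards. The clean formulation: $f\in A^p(\Omega,\omega_N)$ iff $\int_\Omega|f|^p(1+|z|)^{-2Np}\,dA<\infty$ iff $fh\in A^p(\Omega)$ where $h(z)=(z+ic)^{-2N}$, because $|fh|^p\asymp|f|^p(1+|z|)^{-2Np}\asymp|f|^p\omega_N$. So multiplication by $h$ is the desired isomorphism $A^p(\Omega,\omega_N)\to A^p(\Omega)$, and multiplication by $1/h=(z+ic)^{2N}$ is the inverse, well-defined on $A^p(\Omega)$ precisely because $(z+ic)^{2N}$ times an $A^p(\Omega)$ function lands in $A^p(\Omega,\omega_N)$ by the same comparison.

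With this dictionary the corollary is immediate: given $f\in A^p(S_\theta,\omega_N)$, the function $fh\in A^p(S_\theta)$, so by Theorem \ref{thm2} write $fh=F_1+F_2$ with $F_j\in A^p(H_j)$; then $f = (F_1/h)+(F_2/h)$ and $F_j/h = F_j\cdot(z+ic)^{2N}\in A^p(H_j,\omega_N)$ by the comparison on $H_j$. The reverse inclusion $A^p(H_j,\omega_N)\subset A^p(S_\theta,\omega_N)$ is trivial since $S_\theta\subset H_j$ and the weight is globally defined. This proves $A^p(S_\theta,\omega_N)=A^p(H_1,\omega_N)+A^p(H_2,\omega_N)$.

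The main obstacle — really the only point requiring care — is the choice of $c$ so that $z+ic$ is zero-free on $\overline{H_1}\cup\overline{H_2}$ and the two-sided bound $|z+ic|\asymp 1+|z|$ holds uniformly on that closed set; this is a finite bit of plane geometry (the point $-ic$ must lie outside the closed halfplanes $\overline{H_1},\overline{H_2}$, which is possible exactly because $H_1\cap H_2=S_\theta$ is a proper sector, so the union of the two closed halfplanes is not all of $\C$). One should also note the harmless branch-of-power issue: since we only use integer powers $2N$, $(z+ic)^{2N}$ is entire and no branch cut is needed, so the multiplier is genuinely holomorphic across all three domains. Everything else is the routine $\asymp$ bookkeeping indicated above, and the same scheme upgrades Theorems \ref{thm1}, \ref{thm3} and \ref{thmcvx} to their weighted $\omega_l$ versions as asserted in the text.
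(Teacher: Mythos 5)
Your proof is correct and is essentially the paper's argument: the "clean formulation" you settle on (multiply by $(z+ic)^{-2N}$ with $-ic$ at positive distance from the sector, apply Theorem \ref{thm2}, multiply back) is exactly the paper's multiplier trick with $P(z)=(z-z_0)^{2N}$ and $\dist(z_0,S_\theta)>0$. The only cosmetic difference is that the paper needs $z_0$ away from $S_\theta$ alone (the upper bound $|P(z)|\lesssim(1+|z|)^{2N}$ on the $H_j$ is automatic), whereas you impose the slightly stronger, but still achievable, condition that $z_0$ avoid $\overline{H_1}\cup\overline{H_2}$.
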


\begin{proof}
The direct inclusion is obvious, let us prove the converse one. 
Denote by $z_0$ a complex number such that $\dist(z_0,S_\theta)>0$ and $P(z)= (z-z_0)^{2N}$.
Pick $f \in A^p(S_\theta, \omega_N)$, then $\frac{f}{P}$ belongs to $A^p(S_\theta)$, and by Theorem \ref{thm2}, there exists $\tilde{f_1} \in A^p(H_1)$ and $\tilde{f_2} \in A^p(H_2)$ such that $\frac{f}{P} = \tilde{f_1} + \tilde{f_2}$. Hence, $f= P \tilde{f_1} +  P \tilde{f_2} := f_1 + f_2$ with $f_1 \in A^p(H_1, \omega_N)$ and $f_2 \in A^p(H_2, \omega_N)$. 
\end{proof}

We will now prove Theorem \ref{thmCousin} which is an almost-separation of singularities in the simplest case. Denote by $\bar\partial$ the Cauchy-Riemann operator $\bar\partial= \frac1{2}(\frac{d}{dx}+i \frac{d}{dy})$. 
The main idea is to reduce the problem to a $\overline{\partial}$-equation and to use Hörmander type $L^p$-estimates for the solution of the $\bar{\partial}$-equation. The estimates are certainly well-known to experts but we include a proof for completeness. 
\begin{lem}
\label{Hormander}
Let $1< p < \infty$.
Let $\Omega \subset \C$ be an open connected set such that $\overline{\Omega} \neq \C$. If $f \in L^p(\Omega)$
then there exists a solution $u$ of the equation $\bar{\partial} u = f$ on $\Omega$ such that $u \in L^p\left(\Omega, \, \omega_1 \right)$.
\end{lem}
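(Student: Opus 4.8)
The plan is to solve the inhomogeneous $\bar\partial$-equation with an $L^p$-gain in the weight by following H\"ormander's classical Hilbert-space scheme when $p=2$, and then upgrading to general $1<p<\infty$ by an interpolation/duality argument, or alternatively by invoking the known $L^p$-theory of the Cauchy transform directly. First I would reduce to the model situation: since $\overline{\Omega}\neq\C$, after a translation and rotation I may assume that $\Omega\subset\C^+$ (or at least that $\Omega$ avoids a fixed point, say $-i\notin\overline\Omega$), so that the function $z\mapsto (1+|z|^2)$ is comparable to $|z+i|^2$ on $\Omega$ and the weight $\omega_1(z)=(1+|z|^{2p})^{-1}$ is comparable to $|z+i|^{-2p}$. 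The point of the weight is exactly that it allows a decay of order $|z|^{-2}$ for $u$, which is the natural gain produced by the Cauchy transform $u(z)=\frac{1}{\pi}\int_\Omega \frac{f(w)}{z-w}\,dA(w)$ when one integrates a compactly-supported-looking datum; the weight replaces the (false in general) statement that the Cauchy transform of an $L^p$ function is again in $L^p$.

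Here is the route I would actually write down. Extend $f$ by zero to all of $\C$, call it $\tilde f\in L^p(\C)$ (supported on $\Omega$), and set $u(z)=\frac{1}{\pi}\int_\C \frac{\tilde f(w)}{z-w}\,dA(w)$, the Cauchy (Newtonian) potential. Standard theory gives $\bar\partial u=\tilde f$ in the distributional sense, hence $\bar\partial u=f$ on $\Omega$, and $u$ is holomorphic off $\overline\Omega$. It remains to show $u\in L^p(\Omega,\omega_1)$, i.e.\ $\int_\Omega |u(z)|^p (1+|z|^{2p})^{-1}\,dA(z)<\infty$. For this I split the integral over $\Omega$ into the region $|z|\le 2$ (or $|z|$ bounded) and $|z|\ge 2$. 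On the bounded region, $\omega_1\asymp 1$, and local $L^p$-boundedness of the convolution with $1/z$ — the kernel $1/z$ being locally integrable, so convolution maps $L^p_{\mathrm{loc}}\to L^p_{\mathrm{loc}}$ by Young's inequality applied to $f\mathds{1}_{|w|\le R}$ together with the tail estimate below — gives a finite contribution. On the region $|z|\ge 2$ I would further split the $w$-integral defining $u(z)$ into $|w|\le |z|/2$, $|w-z|\le |z|/2$, and the remaining annular region $|w|\ge |z|/2$, $|w-z|\ge |z|/2$. For $|w|\le |z|/2$ one has $|z-w|\gtrsim|z|$, so that part of $u(z)$ is bounded by $\frac{C}{|z|}\int_\Omega |f(w)|\,dA(w)$; but $f$ need only be $L^p$, not $L^1$, so instead I estimate $\frac{1}{|z|}\int_{|w|\le|z|/2}|f(w)|\,dA(w) \le \frac{1}{|z|}\,\|f\|_p \,(|z|/2)^{2/q}\lesssim |z|^{2/q-1}\|f\|_p$ by H\"older with $1/p+1/q=1$, and since $p<\infty$ forces $q>1$, we get $2/q-1<1$, so $|z|^{(2/q-1)p}(1+|z|^{2p})^{-1}$ is integrable at infinity precisely because $(2/q-1)p - 2p = -2p/q \cdot(\text{something})<-2$; one checks the exponent works out to be $<-2$ for every $p>1$, which is where $p>1$ is used. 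The near-diagonal piece $|w-z|\le|z|/2$ is handled by the local bound (a translate of the bounded-region estimate) and contributes something $\lesssim\|f\|_{L^p(B(z,|z|/2))}$, whose $p$-th power integrates against $\omega_1$ by Fubini and the fact that each $w$ lies in boundedly many such balls, modulated by the weight which is comparable along the ball; this again uses $q>1$. The far annular piece is dominated by the same H\"older argument as the first piece.

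The cleaner alternative — and the one I would probably present to keep the paper self-contained yet short — is to first prove the $p=2$ case by H\"older–H\"ormander: minimize $\int_\C |u|^2 e^{-\varphi}$ over solutions of $\bar\partial u=f$ with $\varphi(z)=2\log(1+|z|^2)$ (so $e^{-\varphi}\asymp\omega_1$ up to the adjustment $\omega_1=(1+|z|^{2\cdot 2})^{-1}$ when $p=2$), using that $\Delta\varphi>0$ gives the a priori estimate $\int|u|^2 e^{-\varphi}\le \int |f|^2 e^{-\varphi}/(\Delta\varphi/4)$; since $\Delta\log(1+|z|^2)=4/(1+|z|^2)^2$ decays, one does not quite get a bound by $\|f\|_{L^2(\Omega)}$ but rather by $\int_\Omega|f|^2(1+|z|^2)^{?}$, so some care with exponents is needed, and in fact choosing $\varphi$ slightly differently (e.g.\ $\varphi=2\log(1+|z|^2)+\epsilon|z|^2$ then letting $\epsilon\to0$, or $\varphi(z)=\log(1+|z|^2)$) is the standard fix. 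Then for general $p$ I would either (a) quote that the canonical ($L^2$-minimal with respect to a fixed weight) $\bar\partial$-solution operator, being essentially the Beurling-type transform composed with Cauchy transform localized by the weight, is bounded on the relevant $L^p$-spaces by Calder\'on–Zygmund theory, or (b) simply run the explicit Cauchy-transform argument of the previous paragraph which works uniformly in $p\in(1,\infty)$ and makes the role of $p>1$ completely transparent. I expect the main obstacle to be bookkeeping the exponents in the weighted tail estimates so that the borderline case $p$ close to $1$ still gives an integrable power of $|z|$ at infinity — this is exactly the place where $p>1$ (equivalently $q<\infty$) is essential and where the failure at $p=1$, flagged in the introduction, originates; the near-diagonal Calder\'on–Zygmund estimate is routine and the holomorphy of $u$ off $\overline\Omega$ is immediate, so those are not the difficulty.
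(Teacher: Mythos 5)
Your main route --- applying the Cauchy transform directly to $f$ extended by zero --- has a genuine gap for $2\le p<\infty$: on an unbounded $\Omega$ the integral $\int_\Omega |f(w)|\,|z-w|^{-1}\,dA(w)$ need not converge at any point $z$, so $u$ is not even well defined. Your own estimates show where this breaks: on the far annular region $\{|w|\ge|z|/2,\ |w-z|\ge|z|/2\}$, which has infinite measure, H\"older bounds the contribution by $\|f\|_{p}\,\bigl\|\,|z-\cdot|^{-1}\bigr\|_{L^q}$, and $|z-w|^{-1}$ lies in $L^q$ near infinity only when $q>2$, i.e.\ $p<2$; the piece $|w|\le|z|/2$ is harmless only because that region has finite measure. (Concretely, for $p=2$ take $f(w)=|w|^{-1}(\log|w|)^{-3/4}$ for $|w|\ge 2$: then $f\in L^2$ but $\int |f(w)|\,|z-w|^{-1}\,dA(w)=+\infty$ for every $z$.) The same defect infects the tail of your ``bounded region'' estimate. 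Of your two fallbacks, (a) --- Calder\'on--Zygmund boundedness of a canonical solution operator on these weighted spaces over an unbounded domain --- is an assertion, not a proof, and (b) is exactly the argument that just failed.

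The ingredient you are missing is the paper's preliminary division by a linear factor, and it is where the hypothesis $\overline{\Omega}\neq\C$ does real work (you use it only to normalize the weight). Choose $z_0$ with $\dist(z_0,\Omega)>0$, set $Q(z)=z-z_0$ and $g=f/Q$, and apply the Cauchy transform to $g$ instead of $f$: the kernel becomes $\bigl((s-z)(s-z_0)\bigr)^{-1}\sim |s|^{-2}$ at infinity, which belongs to $L^{p'}$ there for every $p'>1$, so the integral converges for all $1<p<\infty$. Splitting near/far with respect to a unit disk around each point, using Young's inequality for the near part and H\"older for the far part, one gets $u_g(z)=O\bigl((1+|z|)^{-1}\bigr)$ for $p<2$ and $u_g(z)=O(1)$ for $p>2$ (locally in $L^p$), whence $u_g\in L^p\bigl(\Omega,(1+|z|^{p})^{-1}\bigr)$ and $u_f=Qu_g$ solves $\bar\partial u_f=f$ with $u_f\in L^p(\Omega,\omega_1)$. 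Your region-splitting and exponent bookkeeping would then go through essentially unchanged; without this reduction the construction collapses for $p\ge 2$.
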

Note that we do not look for a solution in $L^p(\Omega)$ but we allow a weight to appear which makes the problem solvable in the setting under consideration. Still, this solution will be sufficient for our purpose.
\begin{proof}
The case $p=2$ is a particular case of the famous Hörmander $L^2$-estimates \cite[Thm 4.2.1]{Ho1}. From now on, let $1 < p\neq2 < \infty$.

For bounded $\Omega$ the result can be found in \cite[Sec. 2, p.134]{FS} and follows from  Young's inequality and properties of the Cauchy kernel $\frac{1}{s-z}$. Indeed, the classical solution (i.e the solution with minimal weigthed $L^2$-norm) of $\bar\partial u = f$ on $\Omega$ is given by  
$$u(z)=\frac{1}{\pi}\int_\Omega \frac{f(s)}{s-z} dA(s) $$
and satisfies 
\begin{equation}
\label{eq-Young}
\|u\|_{L^p(\Omega)} \leq \|f\|_{L^p(\Omega)} \left\|\frac1{z}\right\|_{L^1(\Omega - \Omega)} \leq C \|f\|_{L^p(\Omega)}
\end{equation}
where $C$ depends only the diameter of $\Omega-\Omega=\{u-v:u\in\Omega, v\in\Omega\}$. 

For the general case, let $z_0\in\C$ be such that $\dist(z_0,\Omega)>0$ and set $Q(z) =z-z_0$. Denote by $g$ the function $g=\frac{f}{Q}$. Clearly,  $u_g$ satifies $\bar{\partial} u_g=g$ if and only if $\bar{\partial} u_f=f$, where $u_f= Q u_g$. It thus suffices to prove the existence of a solution $u_g$ to $\bar{\partial} u_g=g$ such that $u_g \in L^p\left(\Omega, (1+\abs{z}^{p})^{-1}\right)$. For this, we choose $u_g$ to be the classical solution of $\bar{\partial} u_g=g$ on $\Omega$ defined by 
$$u_g(z)= u_g^{\Omega}(z)=\frac{1}{\pi}\int_\Omega \frac{g(s)}{s-z} dA(s). $$

Let $\zeta\in\Omega$, then for every $z \in D(\zeta,\, 1)\cap\Omega$ we have 
$$u_g(z)= \int_{\Omega \setminus D(\zeta, \, 2)} \frac{g(s)}{s-z} dA(s) + \int_{\Omega \cap D(\zeta, \, 2)} \frac{g(s)}{s-z} dA(s):=u_g^{\Omega \setminus D(\zeta, \, 2)} + u_g^{\Omega \cap D(\zeta, \, 2)}. $$
Considering $u_g^{\Omega\cap D(\zeta,2)}$ as the solution to the $\bar{\partial}$-problem on the bounded domain $\Omega\cap D(\zeta,2)$, we obtain by \eqref{eq-Young}
\begin{align*}
\|u_g^{\Omega\cap D(\zeta,2)}\|_{L^p(\Omega\cap D(\zeta, \, 1))} &\leq \|u_g^{\Omega\cap D(\zeta,2)}\|_{L^p(\Omega\cap D(\zeta, \, 2))} \\
&\leq C \|g\|_{L^p(\Omega\cap D(\zeta, \, 2))} \\
&\leq \frac{C}{1 + |\zeta|} \|f\|_{L^p(\Omega\cap D(\zeta, \, 2))} \\
&\leq \frac{C}{1 + |\zeta|}
\end{align*} 
where $C$ is essentially given by $\left\|\frac1{z}\right\|_{L^1( D(\zeta, \, 2)-  D(\zeta, \, 2))}
\le  \left\|\frac1{z}\right\|_{L^1( D(0,\, 4))}$ and thus
independent on $\zeta$. 

For $u_g^{\Omega \setminus D(\zeta, \, 2)}$, Hölder's inequality gives for all $z \in D(\zeta, \, 1)$, 
$$ \abs{u_g^{\Omega \setminus D(\zeta, \, 2)}(z)} \leq {\|f\|}_{L^p \left(\Omega \setminus D(\zeta, \, 2)\right)}  {\left\|\frac{1}{(z-\cdot)Q}\right\|}_{L^{p'}\left(\Omega \setminus D(\zeta, \, 2)\right)} \leq 
\begin{cases}
\frac{C}{1+|\zeta|} \quad \text{if } p <2 \\
C \qquad \text{if } p>2
\end{cases} 
$$
where $p'$ is the conjugate exponent of $p$, and $C$ is independent on $\zeta$.

Putting the above estimates together, and observing that $1+|\zeta|\simeq 1+|z|$ for
$z\in D(\zeta,1)$, we get
$$
|u_g(z)|\le \frac{C}{1 + |\zeta|}+\begin{cases}
\frac{C}{1+|\zeta|} \quad \text{if } p <2 \\
C \qquad \text{if } p>2
\end{cases} 
 \le \begin{cases}
\frac{C'}{1+|z|} \quad \text{if } p <2 \\
C' \qquad \text{if } p>2
\end{cases} 
$$
(pick for instance $C'=2C$).

Hence
$$
\int_{\Omega} \frac{\abs{u_g(z)}^p}{1+\abs{z}^{p}} dA(z) 
\leq
\begin{cases}
\int_{\Omega} \frac{C'}{1+|z|^{2p}} dA(z) \qquad \text{if } p<2\\
\int_{\Omega} \frac{C'}{1+|z|^{p}} dA(z) \qquad \text{if } p>2
\end{cases}
< \infty.
$$
\end{proof}

Restating the previous lemma in the spirit of H\"ormander's result
is to say that the solution $u$ of $\overline{\partial}u=f$ satisfies
$$
 \int_{\Omega}|u(z)|^p(1+|z|^2)^{p}dA(z)\le C_p\int_{\Omega}|f(z)|^p dA(z).
$$
In particular, this is coherent in the power of $(1+|z|^2)$ with H\"ormander's result which
yields exactly the above estimate for $p=2$ (with a simpler proof). However, our argument 
does not work for $p=2$.\\

The proof below follows essentially the argument \cite[Theorem 9.4.1]{AM} combined with the $L^p$-estimates from the previous lemma.

\begin{proof}[Proof of Theorem \ref{thmCousin}]
%

Pick $f \in A^p(\Omega_1 \cap \Omega_2)$. 
Take $\chi$ a bounded $C^{\infty}$-function such that $\chi=1$ on ${\Omega_1 \setminus \Omega_2}$ and $\chi=0$ on $\Omega_2 \setminus \Omega_1$. Since $\dist(\Omega_1 \setminus \Omega_2,\Omega_2 \setminus \Omega_1)>0$, we can assume that $\nabla \chi$ is uniformly bounded (note that it vanishes outside $\Omega_1\cap\Omega_2$). So we can define $h_1=f(1-\chi)$ on $\Omega_1$ and $h_2=f \chi$ on $\Omega_2$. Using the analyticity of $f$, we have $\overline{\partial}h_1= -f\overline{\partial}\chi=-\overline{\partial}h_2 $ on $\Omega_1 \cap \Omega_2$, which implies the existence of a $C^\infty$-continuation $v$ such that $v=\overline{\partial}h_1$ on $\Omega_1$ and $v=-\overline{\partial}h_2$ on $\Omega_2$. Since $\overline{\partial}\chi$ is bounded, $v$ belongs to $L^p(\Omega_1 \cup \Omega_2)$ and by Lemma \ref{Hormander}, there exists $u \in L^p(\Omega_1 \cup \Omega_2, \omega_{1})$ such that $\overline{\partial} u = v$. Finally, defining $f_1=h_1-u$ on $\Omega_1$ and $f_2=h_2+u$ on $\Omega_2$, we obtain $f= f_1+f_2$ on $\Omega_1 \cap \Omega_2$ and $f_i \in A^p(\Omega_i, \omega_{1})$ by definition of $u$. The proof is complete.
\end{proof}

Now, a multiplier argument gives us a general separation of singularities result. 
\begin{proof}[Proof of Corollary \ref{corCousin}]
Let $z_0$ be such that $\dist(z_0, {\Omega_1 \cup \Omega_2})>0$,  and write $P(z)=(z-z_0)^2$. Pick $f \in A^p(\Omega_1 \cap \Omega_2)$, then $g:=Pf$ belongs also to $A^p(\Omega_1 \cap \Omega_2)$ since $P$ is bounded on $\Omega_1 \cap \Omega_2$. So, by Theorem \ref{thmCousin}, $g=g_1 + g_2$ with $g_1 \in A^p(\Omega_1, \omega_1)$ and $g_2 \in A^p(\Omega_2, \omega_1)$. Therefore $f=\frac{g}{P}$ belongs to $A^p(\Omega_1)+A^p(\Omega_2)$.  
\end{proof}

With the same argument used in Corollary \ref{weighted}, we obtain the following weighted version of Theorem \ref{thmCousin}.

\begin{cor}
\label{Cousinweighted}
Let $\Omega_1$ and $\Omega_2$ be open sets of $\C$ such that $\Omega_1 \cap \Omega_2 \neq \emptyset$. If $\dist(\Omega_1 \setminus \Omega_2,\  \Omega_2 \setminus \Omega_1) >0$, then for any $l\in \N$, we have $A^p(\Omega_1 \cap \Omega_2, \omega_l) \subset A^p(\Omega_1, \omega_{l+1}) + A^p(\Omega_2,\omega_{l+1})$.
\end{cor}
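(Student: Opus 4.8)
The final statement to prove is Corollary \ref{Cousinweighted}, the weighted version of Theorem \ref{thmCousin}.

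\medskip

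The plan is to imitate exactly the multiplier trick already used in the proof of Corollary \ref{weighted} and Corollary \ref{corCousin}. Since $\dist(\Omega_1\setminus\Omega_2,\Omega_2\setminus\Omega_1)>0$, Theorem \ref{thmCousin} applies to the pair $(\Omega_1,\Omega_2)$, so the unweighted separation $A^p(\Omega_1\cap\Omega_2)\subset A^p(\Omega_1,\omega_1)+A^p(\Omega_2,\omega_1)$ is available; the point is simply to transfer the weight $\omega_l$ on the intersection into weights $\omega_{l+1}$ on the two pieces.

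\medskip

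First I would pick a point $z_0\in\C$ with $\dist(z_0,\Omega_1\cup\Omega_2)>0$ — this is possible since $\overline{\Omega_1\cup\Omega_2}\neq\C$ is implicit in the hypothesis $\dist(\Omega_1\setminus\Omega_2,\Omega_2\setminus\Omega_1)>0$ forcing the sets to be separated; more precisely, if $\Omega_1\cup\Omega_2$ were dense in $\C$ the distance condition would fail, so we may fix such a $z_0$. Wait — more carefully: the hypothesis does not literally forbid $\overline{\Omega_1\cup\Omega_2}=\C$ (e.g.\ two half planes), but Theorem \ref{thmCousin} already only concludes an $\omega_1$-weighted statement in that generality, so for the corollary I should just invoke Theorem \ref{thmCousin} directly without needing $z_0$ at all. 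Let me restructure: take $f\in A^p(\Omega_1\cap\Omega_2,\omega_l)$, so $g:=(z-z_0)^{2l}f$... but $(z-z_0)^{2l}$ is not bounded on $\Omega_1\cap\Omega_2$ unless that set is bounded. So instead the correct move is to observe that $f\in A^p(\Omega_1\cap\Omega_2,\omega_l)$ means $(1+|z|^{2p})^{-l/p}f\in L^p$, and we want to write $f=f_1+f_2$ with $(1+|z|^{2p})^{-(l+1)/p}f_i\in L^p$.

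\medskip

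Here is the clean approach. Let $w(z)=(1+z^2)$... no, that vanishes. Use instead a fixed nonvanishing entire multiplier: since we only need to \emph{lose} one power of the weight on each side, write $f\in A^p(\Omega_1\cap\Omega_2,\omega_l)$ and apply Theorem \ref{thmCousin} not to $f$ but after dividing. Concretely, I would run the \emph{same} $\overline{\partial}$-argument as in the proof of Theorem \ref{thmCousin}: take the cutoff $\chi$ with $\nabla\chi$ uniformly bounded and supported in $\Omega_1\cap\Omega_2$, set $h_1=f(1-\chi)$, $h_2=f\chi$, and $v=\overline\partial h_1=-f\,\overline\partial\chi$ on $\Omega_1\cap\Omega_2$, extended by $0$; then $|v|\le C|f|$ pointwise, hence $v\in L^p(\Omega_1\cup\Omega_2,\omega_l)$. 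The one input I need is a \emph{weighted} Hörmander estimate: if $v\in L^p(\Omega,\omega_l)$ then $\overline\partial u=v$ has a solution $u\in L^p(\Omega,\omega_{l+1})$. This follows by applying Lemma \ref{Hormander} to $v/(z-z_0)^{2l}$... only if that is in $L^p(\Omega)$, which holds because $\omega_l(z)\,|z-z_0|^{-2lp}$ is comparable to $1$ at infinity, so $v\in L^p(\Omega,\omega_l)\iff v/(z-z_0)^{2l}\in L^p(\Omega)$ when $\dist(z_0,\Omega)>0$. Thus Lemma \ref{Hormander} gives $\tilde u\in L^p(\Omega,\omega_1)$ with $\overline\partial\tilde u=v/(z-z_0)^{2l}$; set $u=(z-z_0)^{2l}\tilde u$. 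Then $\overline\partial u=v$ and $u\in L^p(\Omega,\omega_{l+1})$ since $|(z-z_0)^{2l}|^p\omega_{l+1}\lesssim \omega_1$ away from $z_0$ and $\omega_{l+1}$ is bounded near $z_0$. Finally $f_1=h_1-u$ on $\Omega_1$, $f_2=h_2+u$ on $\Omega_2$ are holomorphic, $f=f_1+f_2$, and $f_i\in A^p(\Omega_i,\omega_{l+1})$ because $h_i\in L^p(\Omega_i,\omega_l)\subset L^p(\Omega_i,\omega_{l+1})$ and $u\in L^p(\Omega_i,\omega_{l+1})$.

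\medskip

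The main obstacle is the existence of $z_0$ with $\dist(z_0,\Omega_1\cup\Omega_2)>0$ — this is exactly where one needs $\overline{\Omega_1\cup\Omega_2}\neq\C$, which is \emph{not} assumed in Corollary \ref{Cousinweighted} as stated. So either the statement tacitly includes that hypothesis (consistent with how Lemma \ref{Hormander} itself requires $\overline\Omega\neq\C$, so this is harmless), or one must argue more carefully using that $v$ is supported in the bounded-or-separated overlap region. Assuming, as Lemma \ref{Hormander} does, that $\overline{\Omega_1\cup\Omega_2}\neq\C$, the proof is the routine weight-bookkeeping above; no genuinely new difficulty arises beyond checking the comparabilities $\omega_l(z)|z-z_0|^{-2lp}\asymp 1$ and $|z-z_0|^{2lp}\omega_{l+1}\lesssim\omega_1$ for $z_0$ at positive distance from the domains.
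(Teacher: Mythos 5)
Your final $\overline\partial$-argument is essentially sound, but you took a long detour around the proof the paper actually intends, and you rejected that intended proof for the wrong reason. The paper proves this corollary in three lines by ``the same argument used in Corollary \ref{weighted}'': pick $z_0$ with $\dist(z_0,\Omega_1\cap\Omega_2)>0$, set $P(z)=(z-z_0)^{2l}$, and \emph{divide}: for $f\in A^p(\Omega_1\cap\Omega_2,\omega_l)$ one has $f/P\in A^p(\Omega_1\cap\Omega_2)$, because $|z-z_0|^{-2lp}\lesssim(1+|z|^{2p})^{-l}$ on any set staying at positive distance from $z_0$. Theorem \ref{thmCousin} then gives $f/P=\tilde f_1+\tilde f_2$ with $\tilde f_i\in A^p(\Omega_i,\omega_1)$, and multiplying back, $P\tilde f_i\in A^p(\Omega_i,\omega_{l+1})$ since $|P(z)|^p\,\omega_{l+1}(z)=|z-z_0|^{2lp}(1+|z|^{2p})^{-(l+1)}\lesssim(1+|z|^{2p})^{-1}=\omega_1(z)$ holds globally, with no condition on the location of $z_0$ relative to $\Omega_1$ or $\Omega_2$. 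Your objection --- ``$(z-z_0)^{2l}$ is not bounded on $\Omega_1\cap\Omega_2$'' --- arose because you set $g=(z-z_0)^{2l}f$, i.e.\ you multiplied where you should have divided: to pass from the weighted to the unweighted space one must kill the permitted growth of $f$, not add to it. Note also that this reduction needs $z_0$ only at positive distance from the \emph{intersection}, not from the union.

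Your replacement argument (rerunning the cutoff-plus-$\overline\partial$ proof of Theorem \ref{thmCousin} with a weighted Hörmander estimate obtained from Lemma \ref{Hormander} by the same division trick) is correct as far as it goes, and the weight bookkeeping you check is right. What it buys is nothing over the paper's route --- it re-proves Theorem \ref{thmCousin} instead of quoting it --- and it forces you to locate $z_0$ at positive distance from $\Omega_1\cup\Omega_2$, the caveat you rightly flag. (That caveat is in fact already latent in the paper: the proof of Theorem \ref{thmCousin} itself applies Lemma \ref{Hormander} to $\Omega_1\cup\Omega_2$, so for $p\neq2$ it tacitly assumes $\overline{\Omega_1\cup\Omega_2}\neq\C$; the corollary is nonetheless later invoked for a strip, where the union of the two half planes is dense. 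So your concern is legitimate, but it is a remark about Theorem \ref{thmCousin}, not an obstacle specific to the weighted corollary, which reduces to the unweighted theorem by the division trick above.)
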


Before proving Theorem \ref{thm3} we need an auxiliary result on decompositions of Bergman spaces on unbounded intersections of half planes.
This is provided by the following lemma. 

\begin{lem}
\label{lemmaunbounded}
Let $n \geq 2$ and $H_1, \dots, H_n$ be half planes such that $\Omega:=\bigcap_{k=1}^n H_k$ is non-empty, convex and unbounded. Then for any $l \in \N$, we have $$A^p(\Omega, \omega_l) \subset \sum_{k=1}^n A^p(H_k, \omega_{l+(n-1)}).$$ 
\end{lem}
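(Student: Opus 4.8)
The plan is to prove Lemma \ref{lemmaunbounded} by induction on the number $n$ of half planes, peeling off one half plane at a time and reducing to a configuration that is either a sector (handled by Corollary \ref{weighted}) or again an unbounded convex intersection of fewer half planes.

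\textbf{Base case.} For $n=2$, $\Omega = H_1 \cap H_2$ is a non-empty, convex, \emph{unbounded} intersection of two half planes. If $H_1$ and $H_2$ are not parallel, then $\Omega$ is an angular sector, and Corollary \ref{weighted} gives $A^p(\Omega, \omega_l) = A^p(H_1,\omega_l) + A^p(H_2,\omega_l) \subset A^p(H_1,\omega_{l+1}) + A^p(H_2,\omega_{l+1})$, which is stronger than claimed (here $n-1 = 1$). If $H_1$ and $H_2$ have parallel boundary lines, then either $H_1 \subset H_2$ (or vice versa), in which case $\Omega = H_1$ and the inclusion is trivial, or $\Omega$ is a strip; but a strip is bounded in one direction, and one checks that $\dist(H_1 \setminus H_2, H_2 \setminus H_1) > 0$, so Corollary \ref{Cousinweighted} applies and gives $A^p(\Omega,\omega_l) \subset A^p(H_1,\omega_{l+1}) + A^p(H_2,\omega_{l+1})$. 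Either way the base case holds.

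\textbf{Induction step.} Assume the statement for $n-1$ half planes and let $\Omega = \bigcap_{k=1}^n H_k$ be non-empty, convex, unbounded. The key geometric observation is that since $\Omega$ is unbounded and convex, it contains a ray, hence has a recession direction; among the $H_k$ one can select a half plane, say $H_n$, whose boundary line supports $\Omega$ in such a way that $\Omega' := \bigcap_{k=1}^{n-1} H_k$ is still convex and unbounded (one removes a half plane which is ``not binding at infinity'', i.e.\ which does not cut off the recession cone). Then $\Omega = \Omega' \cap H_n$, and the geometry near the (finitely many) edges where $\partial \Omega'$ meets $\partial H_n$ is locally that of a sector; more importantly, $\Omega'$ and $H_n$ are two convex sets whose intersection we want to separate. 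I would split into two cases: (a) $\partial \Omega' \cap \partial H_n$ is such that $\dist(\Omega' \setminus H_n, H_n \setminus \Omega') > 0$ — then Corollary \ref{Cousinweighted} gives $A^p(\Omega,\omega_l) \subset A^p(\Omega',\omega_{l+1}) + A^p(H_n,\omega_{l+1})$, and applying the induction hypothesis to $A^p(\Omega',\omega_{l+1})$ yields $A^p(\Omega',\omega_{l+1}) \subset \sum_{k=1}^{n-1} A^p(H_k, \omega_{l+1+(n-2)}) = \sum_{k=1}^{n-1} A^p(H_k,\omega_{l+(n-1)})$, and $A^p(H_n,\omega_{l+1}) \subset A^p(H_n,\omega_{l+(n-1)})$ since $n \geq 2$; (b) the boundaries meet, so $\Omega' \cap H_n$ has corners — then I would first enlarge $H_n$ slightly or instead argue via a preliminary decomposition reducing to right-angle configurations, as in the proof of Theorem \ref{thm2}, to reduce to case (a). A cleaner route for the induction step: choose the ordering of the $H_k$ so that $H_1 \cap H_2$ is already a sector, apply Corollary \ref{weighted} to write $A^p(\Omega,\omega_l) \subset A^p(H_1 \cap \bigcap_{k\ge3}H_k, \omega_?) + \dots$; but the bookkeeping of weights is delicate, which is why the $\Omega' \cap H_n$ split with Corollary \ref{Cousinweighted} is preferable.

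\textbf{Main obstacle.} The real difficulty is the geometric/combinatorial step of the induction: ensuring that one can always peel off a half plane $H_n$ so that (i) the remaining intersection $\Omega'$ stays convex, non-empty and \emph{unbounded} (so the induction hypothesis applies), and (ii) the pair $(\Omega', H_n)$ has separated ``difference sets'' so that Corollary \ref{Cousinweighted} applies, or can be reduced to that case. The unboundedness of $\Omega$ is what makes this possible — the recession cone is a nontrivial convex cone, and at least one of the bounding half planes contains it, so removing that one leaves an unbounded set — but verifying (ii) requires care: when two boundary lines of $\Omega'$ and $H_n$ actually intersect inside the picture, the difference sets touch, and one must either absorb this via the sector decomposition of Theorem \ref{thm2} applied locally, or handle the finitely many ``corner'' regions separately with a partition-of-unity $\bar\partial$-argument as in Theorem \ref{thmCousin}. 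The loss of $n-1$ powers in the weight is exactly accounted for by the worst case where at every one of the $n-1$ induction steps we invoke Corollary \ref{Cousinweighted} and each time lose one power of $\omega$.
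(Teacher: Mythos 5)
Your base case matches the paper's (sector via Corollary \ref{weighted}, strip via Corollary \ref{Cousinweighted}), and your overall strategy --- induct on $n$, lose one power of the weight per step via Corollary \ref{Cousinweighted} --- is the right one. But the induction step has a genuine gap, and you essentially say so yourself. If you peel off a single half plane and write $\Omega=\Omega'\cap H_n$ with $\Omega'=\bigcap_{k=1}^{n-1}H_k$, then your case (a) essentially never occurs: at each endpoint $v$ of the side $S_n\subset\partial H_n$ of $\Omega$, the line $\partial H_n$ crosses $\partial\Omega'$, so both $\Omega'\setminus H_n$ and $H_n\setminus\Omega'$ accumulate at $v$ and $\dist(\Omega'\setminus H_n,\ H_n\setminus\Omega')=0$. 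So the whole burden falls on your case (b), for which you only offer vague remedies (``enlarge $H_n$ slightly'', a local reduction to right angles, a separate $\bar\partial$-argument at the corners); none of these is carried out, and the last one would in any case need the positive-distance hypothesis you are trying to avoid. As written, the induction step does not close.

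The missing idea is to make the two pieces of the decomposition \emph{overlap in one common half plane}. Order the half planes so that the sides $S_1,\dots,S_N$ of $\Omega$ are consecutive with $S_1$ and $S_N$ the two unbounded ones, and set $\Omega_1=\bigcap_{i=1}^{N-1}H_i$ and $\Omega_2=H_{N-1}\cap H_N$ (note that $H_{N-1}$ appears in both). Then $\Omega=\Omega_1\cap\Omega_2$, and because both pieces lie inside $H_{N-1}$ and share the entire bounded side $S_{N-1}$, one gets $\dist(\Omega_1\setminus\Omega_2,\ \Omega_2\setminus\Omega_1)=|S_{N-1}|>0$: the two difference sets sit on opposite sides of the segment $S_{N-1}$ along the line $\partial H_{N-1}$, rather than meeting at a vertex. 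Corollary \ref{Cousinweighted} then applies and costs one weight; $\Omega_2$ is a sector (no further weight loss by Corollary \ref{weighted}) and $\Omega_1$ is an unbounded convex intersection of $N-1$ half planes, to which the induction hypothesis applies, giving the total weight $\omega_{l+(N-1)}$. This overlapping split is exactly the step your peeling argument lacks.
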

\begin{proof}
Pick $f \in A^p(\Omega, \omega_l)$. We can assume that $\Omega$ has exactly $n$ sides, 
otherwise just add the zero function. 
Let us prove the result by induction. If $n=2$, $\Omega$ is either a sector or a strip.
In the first case, the result follows from Corollary \ref{weighted}. When $\Omega$ is a strip,  then $d(H_1 \setminus \Omega, H_2\setminus \Omega) >0$ and the result follows from Corollary \ref{Cousinweighted}. So the base case of the induction is established.

Assume now that the lemma is true for every $2\leq n < N$. We shall prove it for $N$. Since $\Omega$ is convex and $N\geq 3$, the boundary $\partial \Omega$ is path connected. Without loss of generality we can assume that the half planes $H_1, \dots H_N$ are ordered such that the sides $S_1, \dots, S_N$ of $\partial \Omega$ satisfy $S_i \subset \partial H_i$, $S_i\cap S_{i+1}\neq\emptyset$ ($1 \leq i \leq N-1$), and $S_1$ and $S_N$ are the unbounded sides of $\partial \Omega$. 
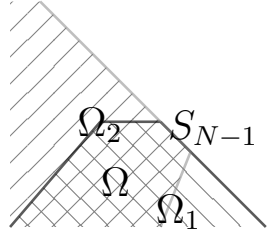
\begin{figure}[h]
\begin{center}
\begin{tikzpicture}[scale=0.2]

\fill[line space=10pt, pattern=my north west lines, pattern color=gray](-7,-7)-- (-1,0) -- (3,0) -- ( 10, -7)-- cycle ;
\fill[line space=10pt, pattern=my north east lines, pattern color=gray] (-7,-7)--(-7,8)--(-5,8)-- (5,-2) -- (-7,0) -- (5,-2)-- (3,-7)-- cycle ;

\draw  [line width=1.pt, color=black,  opacity=0.65](-7,-7)-- (-1,0);
\draw  [line width=1.pt, color=black, opacity=0.65](-1,0)-- (3,0);
\draw  [line width=1.pt, color=black, opacity=0.65](3,0)-- (10,-7);
\draw  [line width=1.pt, color=gray, opacity=0.5](5,-2)-- (3,-7);
\draw  [line width=1.pt, color=gray, opacity=0.5](5,-2)-- (-5,8);

\draw[color=black] (6.5,-0.5) node[xscale=1.3, yscale=1.3] {$S_{N-1}$};
\draw[color=black] (0,-4) node[xscale=1.3, yscale=1.3] {$\Omega$};
\draw[color=black] (4.2,-6) node[xscale=1.3, yscale=1.3] {$\Omega_1$};
\draw[color=black] (-1,-0.2) node[xscale=1.3, yscale=1.3] {$\Omega_2$};
\end{tikzpicture}
\caption{\label{figure2} The decomposition of $\Omega$ as the intersection of $\Omega_1$ and $\Omega_2$.}
\end{center}
\end{figure}
Then, writing $\Omega=\left(\bigcap_{i=1}^{N-1} H_i \right) \cap \left(H_{N-1} \cap H_N \right):= \Omega_1 \cap \Omega_2$ (see Figure \ref{figure2}), we obtain $f=f_1 + f_2$ with $f_1 \in A^p(\Omega_1, \omega_{l+1})$ and $f_2 \in A^p(\Omega_2, \omega_{l+1})$. 
Indeed, this follows from Corollary \ref{Cousinweighted} since $\Omega$ is convex and unbounded, and $\dist(\Omega_1 \setminus \Omega_2,\  \Omega_2 \setminus \Omega_1)=|S_{N-1}| > 0$. 
Finally, using the induction hypothesis on $\Omega_1$ 
and $\Omega_2$ (observe that $\Omega_2$ is a sector so that the weight is not changed here),
we conclude the inductive step, which proves the lemma.  
\end{proof}

\begin{proof}[Proof of Theorem \ref{thm3}]
Again we will prove the result by induction. 
If $n=3$, $\mathcal{P}$ is a triangle and we make the decomposition $\mathcal{P}=\Sigma_3 \cap \Omega$ where $\Sigma_3$ is the angular sector $H_1 \cap H_2$ and $\Omega$ is an unbounded domain, as in Figure \ref{figureTriangle}. We will again denote by $S_i$ the sides of $\mathcal{P}$ and $S_i\subset\partial H_i$. As previously, $\dist(\Sigma_3 \setminus \Omega, \ \Omega \setminus \Sigma_3) >0$, so by Corollary \ref{corCousin} we have 
\begin{eqnarray}\label{Incl1}
 A^p(\mathcal{P})=A^p(\Sigma_3) + A^p(\Omega). 
\end{eqnarray}
(Observe that the corollary does not require any convexity assumption, and no weights appear here.)
Hence, it remains to decompose $\Omega$ which will be done writing $\Omega= \Theta_1 \cap \Theta_2$, as in Figure \ref{figureTriangle}. Again, $\dist(\Theta_1 \setminus \Omega, \ \Theta_2 \setminus \Omega) >0$, so by Theorem \ref{thm2}, we have 
\begin{eqnarray}\label{Incl2}
A^p(\Omega)\subset A^p(\Theta_1, \omega_1) + A^p(\Theta_2, \omega_1).
\end{eqnarray} 

Define $\Sigma_2=H_1\cap H_3$ and $\Sigma_1=H_2\cap H_3$. Clearly $\Sigma_k\subset \Theta_k$, $k=1,2$, and so 
\begin{eqnarray}\label{Incl3}
 A^p(\Theta_k,\omega_1)\subset A^p(\Sigma_k,\omega_1), \quad k=1,2. 
\end{eqnarray}
From \eqref{Incl1}-\eqref{Incl3}, we obtain $A^p(\mathcal{P}) \subset \sum_{k=1}^3 A^p(\Sigma_k, \omega_1)$. It can be checked that $\overline{\bigcup_{k=1}^3 \Sigma_k}\neq\C$, so that there exists $z_0$ with $\dist(z_0,\overline{\bigcup_{k=1}^3 \Sigma_k})>0$. Define $P(z)=(z-z_0)^2$.
Then, for $f \in A^p(\mathcal{P})$, 
the function $g=Pf$ is also in $A^p(\mathcal{P})$ (multiplication by $P$ is actually an invertible operation on $A^p(\mathcal{P})$), which implies that $g$ can be written as $g=g_1+g_2+g_3$ with $g_i \in A^p(\Sigma_i, \omega_1)$. Thus, $f =\frac{g}{P}=\sum_{k=1}^3\frac{g_k}{P}$.  Hence, setting $f_k=\frac{g_k}{P}$, and since $\|g_k\|_{A^p(\Sigma_k,\omega_1)}$ is comparable to $\|f_k\|_{A^p(\Sigma_k)}=\|g_k/P\|_{A^p(\Sigma_k)}$, we obtain that $f$ belongs to  $\sum_{k=1}^3 A^p(\Sigma_k)$. This means that $A^p(\mathcal{P}) = \sum_{k=1}^3 A^p(\Sigma_k) $ and using Theorem \ref{thm2}, we obtain the base case of the induction.

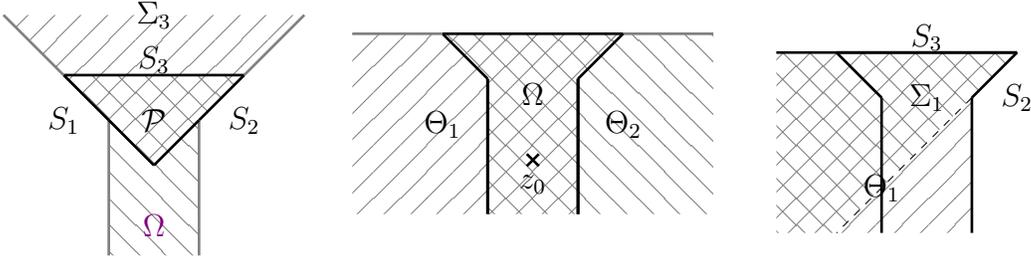
\begin{figure}[h]
\begin{center}
\begin{minipage}[c]{4.5cm}
\begin{tikzpicture}[scale=0.4]

\fill[line space=10pt, pattern=my north west lines, pattern color=gray] (-1.5,-3)-- (-1.5,1.5) -- (-3,3) -- (3,3) -- (1.5,1.5)--(1.5,-3) --  cycle ;
\fill[line space=10pt, pattern=my north east lines, pattern color=gray] (-5,5) -- (5,5) -- (0,0) -- cycle ;

\draw  [line width=1.pt, color=gray](-1.5,-3)-- (-1.5,1.5);
\draw  [line width=1.pt, color=black](-1.5,1.5) -- (-3,3);
\draw  [line width=1.pt, color=black](-3,3) -- (3,3);
\draw  [line width=1.pt, color=black](3,3) -- (1.5,1.5);
\draw  [line width=1.pt, color=gray](1.5,1.5)--(1.5,-3);

\draw  [line width=1.pt, color=gray](-5,5) -- (-3,3);
\draw  [line width=1.pt, color=black](-1.5,1.5) -- (0,0);
\draw  [line width=1.pt, color=gray](5,5) -- (3,3);
\draw  [line width=1.pt, color=black](1.5,1.5) -- (0,0);

\draw[color=black] (0,1.5) node[xscale=1, yscale=1] {$\mathcal{P}$};
\draw[color=black] (0,5) node[xscale=1, yscale=1] {$\Sigma_3$};
\draw[color=violet] (0,-2) node[xscale=1, yscale=1] {$\Omega$};
\draw[color=black] (-3,1.5) node[xscale=1, yscale=1] {$S_1$};
\draw[color=black] (3,1.5) node[xscale=1, yscale=1] {$S_2$};
\draw[color=black] (0,3.5) node[xscale=1, yscale=1] {$S_3$};
\end{tikzpicture}
\end{minipage}
\begin{minipage}[c]{5.5cm}
\begin{tikzpicture}[scale=0.4]

\fill[line space=10pt, pattern=my north west lines, pattern color=gray] (-1.5,-3)-- (-1.5,1.5) -- (-3,3) -- (6,3) --(6,-3) -- cycle ;
\fill[line space=10pt, pattern=my north east lines, pattern color=gray] (-6,-3)--(-6,3)-- (3,3) -- (1.5,1.5)--(1.5,-3) -- cycle ;

\draw  [line width=1.pt](-1.5,1.5) -- (-3,3);
\draw  [line width=1.2pt](-1.5,-3)-- (-1.5,1.5);
\draw  [line width=1.pt, color=gray](3,3) -- (6,3);
\draw  [line width=1.pt, color=gray] (-6,3) -- (-3,3) ;
\draw  [line width=1.pt] (-3,3) -- (3,3) ;
\draw  [line width=1.pt](3,3) -- (1.5,1.5);
\draw  [line width=1.pt](1.5,1.5)--(1.5,-3);

\draw  [line width=1.pt](1.5,1.5)--(1.5,-3);
\draw  [line width=1.pt](1.5,1.5)--(1.5,-3);

\draw  [line width=1.pt](-0.2,-1)--(0.2,-1.4);
\draw  [line width=1.pt](-0.2,-1.4)--(0.2,-1);

\draw[color=black] (3,0) node[xscale=1, yscale=1] {$\Theta_2$};
\draw[color=black] (-3,0) node[xscale=1, yscale=1] {$\Theta_1$};
\draw[color=black] (0,1) node[xscale=1, yscale=1] {$\Omega$};
\draw[color=black] (0,-2) node[xscale=1, yscale=1] {$z_0$};
\end{tikzpicture}
\end{minipage}
\begin{minipage}[c]{4cm}
\begin{tikzpicture}[scale=0.4]

\fill[line space=10pt, pattern=my north west lines, pattern color=gray] (-5,-3)--(-5,3)-- (3,3) -- (-3,-3)--(-5,-3) -- cycle ;
\fill[line space=10pt, pattern=my north east lines, pattern color=gray] (-5,-3)--(-5,3)-- (3,3) -- (1.5,1.5)--(1.5,-3) -- cycle ;

\draw  [line width=1.pt](-1.5,-3)-- (-1.5,1.5);
\draw  [line width=1.pt](-1.5,1.5) -- (-3,3);
\draw  [line width=1.pt](-3,3) -- (3,3);
\draw  [line width=1.pt] (-5,3) -- (3,3) ;
\draw  [line width=1.pt](3,3) -- (1.5,1.5);
\draw  [line width=1.pt](1.5,1.5)--(1.5,-3);
\draw [dashed](1.5,1.5)--( -3,-3);

\draw[color=black] (-1.5,-1.5) node[xscale=1, yscale=1] {$\Theta_1$};
\draw[color=black] (0,1.5) node[xscale=1, yscale=1] {$\Sigma_1$};
\draw[color=black] (3,1.5) node[xscale=1, yscale=1] {$S_2$};
\draw[color=black] (0,3.5) node[xscale=1, yscale=1] {$S_3$};

\end{tikzpicture}
\end{minipage}

\caption{\label{figureTriangle} Decomposition of a triangle.}
\end{center}
\end{figure}

Now, assume that the result is true for every $3\leq k < n$. There are two cases. First suppose that $\mathcal{P}$ has two non-consecutive non-parallel sides (see Figure \ref{figureinduction}). Denote by $S_1, \dots, S_n$ its ordered sides ($S_i\cap S_{i+1}\neq\emptyset$, $S_{n}\cap S_1\neq\emptyset$) and $H_1, \dots, H_n$ the corresponding half planes.  Let $S_i$ and $S_j$ be two non-parallel sides with $j \geq i+2$. Write $\Omega_1= \bigcap_{k =i}^j H_k$ and $\Omega_2 = \bigcap_{k \notin (i, j)} H_k$ (we denote $[i,j]=\{i,i+1,\ldots,j\}$ and $(i,j)=\{i+1,\ldots,j-1\}$). Observe that $\Omega_1$ and $\Omega_2$ have the sides $S_i$ and $S_j$ in common. It is clear that $\mathcal{P}= \Omega_1 \cap \Omega_2$ and $\dist(\Omega_1 \setminus \Omega_2, \, \Omega_2 \setminus \Omega_1) = \min\limits_{\substack{l \in (i, j) \\ k \notin [i, \, j]}} \dist(S_l, S_k) >0$. So, by Corollary \ref{corCousin}, we have $A^p(\mathcal{P})= A^p(\Omega_1) + A^p(\Omega_2)$. Since $S_i$ and $S_j$ are non-parallel, one of the sets $\Omega_i$ is a polygon and the other one is unbounded. Les us assume that $\Omega_1$ is a polygon. Using Lemma \ref{lemmaunbounded} with $l=0$, we obtain $A^p(\Omega_2) \subset  \sum_{k \notin (i,\, j)} A^p(H_k, \omega_{n-1})$, and hence
$$
A^p(\mathcal{P})\subset A^p(\Omega_1) +\sum_{k \notin (i,\, j)} A^p(H_k, \omega_{n-1}),
$$ 
where $\Omega_1$ is a polygon with lower degree. By the induction hypothesis, $ A^p(\Omega_1)$-functions decompose in the required way into $A^p(H_k)$-functions where $k\in [i,j]$. 

In order to manage the second term, we need to get rid of the weight $\omega_{n-1}$. This will again be done using the multiplication by an appropriate polynomial vanishing neither on $H_k$, $k\notin (i,j)$ nor on $\Omega_1$.
For that, observe that $\Omega_2$ is an unbounded convex domain, and $S_i$ and $S_j$ are non-parallel. We claim that there exists $z_0 \notin \overline{\bigcup_{k \notin (i, \, j)} H_k \cup \Omega_1}$. Indeed, for every $k\notin (i,j)$, $H_k\subset H_i\cup H_j$, and moroever $\Omega_1 \subset H_i\cup H_j$ (convexity comes into play here). Since $S_i$ and $S_j$ are non parallel, $H_i\cup H_j\neq \C$, and it is enough to pick $z_0\notin H_i\cup H_j$ with $\dist(z_0,H_i\cup H_j)>0$. 
The polynomial we are looking for is $P(z)=(z-z_0)^{2(n-1)}$. Pick now $f \in A^p(\mathcal{P})$. As in the previous corollary, writing $g=P f$, we have $g \in A^p(\mathcal{P})$ and so, by the reasoning above, $g=g_1+\sum_{k \notin (i, \, j)} g_{2,k}$ with $g_1 \in A^p(\Omega_1)$ and 
$g_{2, k} \in A^p(H_k, \omega_{n-1})$, and hence $f_{2,k}=g_{2,k}/P\in A^p(H_k)$. Also, since multiplication (and division) by $P$ is an invertible operation on $A^p(\Omega_1)$, we have $f_1=g_1/P\in A^p(\Omega_1)$, and by the induction hypothesis (applied to $\Omega_1$) $f_1$ decomposes into a sum of $A^p(H_k)$-functions, $k\in [i,j]$:
$$
 f=f_1+\sum_{k \notin (i, \, j)} \frac{g_{2,k}}{P}\in \sum_{k\in [i,j]}A^p(H_k)+\sum_{k\notin(i,j)}
 A^p(H_k).
$$

\begin{figure}[h]
\begin{center}
\begin{minipage}[c]{6cm}
\begin{tikzpicture}[scale=0.5]

\fill[line space=10pt, pattern=my north west lines, pattern color=gray] (-0.3, 1.10551724138)  -- (1.16,4.68) -- (2.74,4.9)  -- (6.765520123323564,1.41121589312)  -- cycle;
\fill[line space=10pt, pattern=my north east lines, pattern color=gray] (0.58,3.26)-- (0.9,2.62) -- (2.46,2.74)-- (3.64,4.12) -- (2.74,4.9)-- (1.6394452149791956,  5.85381414702) -- (1.16,4.68) -- cycle;

\draw [line width=1.pt] (0.58,3.26)-- (0.9,2.62);
\draw [line width=1.pt] (0.9,2.62)-- (2.46,2.74);
\draw [line width=1.pt] (2.46,2.74)-- (3.64,4.12);
\draw [line width=1.pt] (3.64,4.12)-- (2.74,4.9);
\draw [line width=1.pt ] (2.74,4.9)-- (1.16,4.68);
\draw [line width=1.pt] (1.16,4.68)-- (0.58,3.26);
\draw [line width=1.pt,domain=-0.3:1.6394452149791956] plot(\x,{(-0.8821791955617195-1.1738141470180317*\x)/-0.4794452149791957});
\draw [line width=1.pt,domain=1.6394452149791956 : 6.765520123323564] plot(\x,{(--14.553369209431349-1.7338141470180313*\x)/2.0005547850208045});

\draw[color=black] (2,4) node[xscale=1, yscale=1] {$\mathcal{P}$};
\draw[color=black] (1.8,5.2) node[xscale=1, yscale=1] {$\Omega_1$};
\draw[color=black] (3,2) node[xscale=1, yscale=1] {$\Omega_2$};
\draw[color=black] (3.5,4.7) node[xscale=1, yscale=1] {$S_2$};
\draw[color=black] (3.3,3.2) node[xscale=1, yscale=1] {$S_3$};
\draw[color=black] (2,2.5) node[xscale=1, yscale=1] {$\ldots$};
\draw[color=black] (0.5,4.2) node[xscale=1, yscale=1] {$S_6$};
\end{tikzpicture}
\end{minipage}
\begin{minipage}[c]{5cm}
\begin{tikzpicture}[scale=0.5,grid/.style={pattern=MyGrid}]

\fill[line space=10pt, pattern=my north west lines, pattern color=gray] (-0.3, 1.10551724138)  -- (2.5,8) -- (6.765,8) -- (6.765520123323564,1.41121589312)  -- cycle;
\fill[line space=10pt, pattern=my north east lines, pattern color=gray] (-0.3, 1.10551724138)-- (-0.3,7.5 )--(6.765520123323564,1.41121589312)  -- cycle;

\draw [line width=1.pt] (0.58,3.26)-- (0.9,2.62);
\draw [line width=1.pt] (0.9,2.62)-- (2.46,2.74);
\draw [line width=1.pt] (2.46,2.74)-- (3.64,4.12);
\draw [line width=1.pt ] (2.74,4.9)-- (1.16,4.68);

\draw [line width=1.pt] (1.4,7.1)-- (1.6,7.3);
\draw [line width=1.pt] (1.4,7.3)-- (1.6,7.1);

\draw [dashed] (-0.3,4.47)--(6.7,5.459);
\draw [line width=1.pt,domain=-0.3:2.5] plot(\x,{(-0.8821791955617195-1.1738141470180317*\x)/-0.4794452149791957});
\draw [line width=1.pt,domain=-0.3 : 6.765520123323564] plot(\x,{(--14.553369209431349-1.7338141470180313*\x)/2.0005547850208045});

\draw[color=black] (2,4) node[xscale=1, yscale=1] {$\mathcal{P}$};
\draw[color=black] (1.8,5.2) node[xscale=1, yscale=1] {$\Omega_1$};
\draw[color=black] (3,2) node[xscale=1, yscale=1] {$\Omega_2$};
\draw[color=black] (0.5,6) node[xscale=1, yscale=1] {$H_2$};
\draw[color=black] (3.5,6) node[xscale=1, yscale=1] {$H_6$};
\draw[color=black] (5,4.5) node[xscale=1, yscale=1] {$H_1$};
\draw[color=black] (1.5,7.5) node[xscale=1, yscale=1] {$z_0$};
\end{tikzpicture}
\end{minipage}
\caption{\label{figureinduction} Decomposition of $\mathcal{P}$ as $\Omega_1 \cap \Omega_2$ with a possible numbering of $S_i$, and the domain $H_i\cup H_j$. ($S_2$ and $S_6$ are two non-consecutive non-parallel sides.)}
\end{center}
\end{figure}
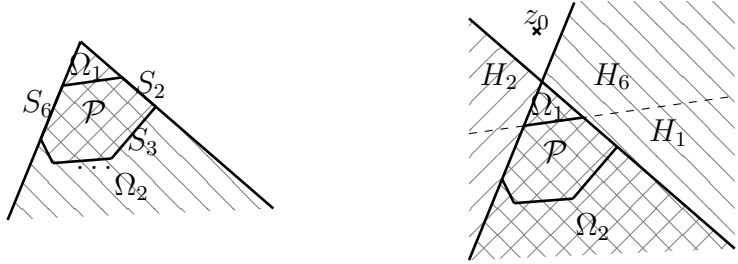

Secondly, suppose that {\it all} non-consecutive sides of $\mathcal{P}$ are parallel. Then $n=4$ and $\mathcal{P}$ is a parallelogram. We treat this case directly. As in the first case, we denote by $S_1, \dots, S_4$ the consecutive sides of $\mathcal{P}$,  $H_1, \dots, H_4$ the corresponding half planes and $\Sigma_k=H_k \cap H_{k+1}$ (with $H_5=H_1$) the angular sectors. We make the same decomposition : $A^p(\mathcal{P}) = A^p(\Omega_1) + A^p(\Omega_2)$ where $\Omega_1 = H_1\cap H_2 \cap H_3$ and $\Omega_2 = H_3 \cap H_4 \cap H_1$. This time $\Omega_1$ and $\Omega_2$ are both unbounded and $\dist(\Omega_1\setminus\Omega_2,\Omega_2\setminus\Omega_1)>0$. Using Theorem \ref{thmCousin} we get $A^p(\mathcal{P})\subset A^p(\Omega_1,\omega_1)+A^p(\Omega_2,\omega_1)$. Next we apply
Lemma \ref{lemmaunbounded} to each of these spaces to get $A^p(\mathcal{P})\subset \sum_{k=1}^4A^p(H_k,\omega_3)$ (notice that the weight is given by $1+(n-1)=3$ since we intersect 3 half planes). Observe that in this case $H_1\cup H_2\cup H_3=\C$ (and similarly for $H_3$, $H_4$, $H_1$), so that at this step we cannot find a $z_0$ allowing the multiplication trick by a polynomial.  Instead, we use Corollary \ref{weighted}, to get sectors: 
$$
A^p(\mathcal{P}) \subset \sum_{k=1}^4A^p(H_k,\omega_3)=A^p(\Sigma_1, \omega_3) + A^p(\Sigma_3, \omega_3),
$$
where $\Sigma_1=H_1\cap H_2$ and $\Sigma_3=H_3\cap H_4$.
Now, there exists a point $z_0$ which is not in $\overline{\Sigma_1 \cup \Sigma_3}$, so that we can use the multiplication trick as in the first case: $f\in A^p(\mathcal{P})$ implies $g=fP\in A^p(\mathcal{P})$ splits into $g=g_1+g_2$ with $g_1\in A^p(\Sigma_1,\omega_3)$, $g_2\in A^p(\Sigma_3,\omega_3)$. With the suitable choice of $P$ we compensate again the weight so that $f_1=g_1/P\in A^p(\Sigma_1)$ and $f_2=g_2/P\in A^p(\Sigma_3)$. Hence $A^p(\mathcal{P}) \subset A^p(\Sigma_1) + A^p(\Sigma_3)$, and we conclude using Theorem \ref{thm2}.
\end{proof}

Let us indicate how the above results apply to more general separation of singularities problem in Bergman spaces, and not only on polygons.

\begin{proof}[Proof of Theorem \ref{thmcvx}]
Assume that $\Omega_1 \nsubset \Omega_2$ and $\Omega_2 \nsubset \Omega_1$, otherwise the problem is trivial. 
Let us make some additional observations. 
\begin{itemize}
\item By assumption, $\Omega_1$ and $\Omega_2$ are convex, so that $\Omega_1 \cap \Omega_2$ is also convex (and non-empty), hence $\partial (\Omega_1 \cap \Omega_2)$ is the image $\Gamma$ of a Jordan curve $\gamma$.
\item $\partial \Omega_1 \cap \partial \Omega_2$ includes at least two points 
and if we write $\tilde{\partial} \left(\partial \Omega_1 \cap \partial \Omega_2 \right) :=\{z_1, \dots, z_n\}$ $(n \geq 2)$, we have 
$$ 
\partial(\Omega_1\cap\Omega_2)=\Gamma=\bigcup_{k=1}^n \Gamma_{z_k, z_{k+1}}, \quad (z_{n+1}:=z_1)
$$
where $\Gamma_{z_k, z_{k+1}}$ is the path in $\partial (\Omega_1\cap \Omega_2)$ connecting $z_k$ to $z_{k+1}$. Moreover, $\Gamma_{z_k, z_{k+1}} \subset \partial \Omega_1$ or $\Gamma_{z_k, z_{k+1}} \subset \partial \Omega_2$. Note that it can happen that $\Gamma_{z_k,z_{k+1}}\subset \partial\Omega_1\cap
\partial\Omega_2$.
\item By convexity, $[z_k,z_{k+1}]\subset \overline{\Omega_1\cap\Omega_2}$.
\end{itemize}
Let us start assuming $n \geq 3$. Pick $f \in A^p(\Omega_1 \cap \Omega_2)$. Write 
$$\mathcal{P}_{z_1, \dots, z_n}:=\mathrm{Int}\left(\mathrm{Conv}(\{z_1, \dots, z_n\})\right)=\bigcap_{k=1}^n H_{z_k, z_{k+1}}
$$ 
where $H_{z_k, z_{k+1}}$ is the half plane associated to the side $[z_k, z_{k+1}]$ of the polygon 
(see Figure \ref{figconvex}), and note that by convexity $\mathcal{P}_{z_1, \dots, z_n} \subset \Omega_1 \cap \Omega_2$. So $f \in A^p(\mathcal{P}_{z_1, \dots, z_n})$ and by Theorem \ref{thm3} there exist $f_1, \dots, f_n$ such that $f_k \in A^p(H_{z_k, z_{k+1}})$ and $f=\sum_{k=1}^n f_k$. It remains to prove that each $f_k$ belongs either to $A^p(\Omega_1)$ or to $A^p(\Omega_2)$. Let $i_k \in\{1,\, 2\}$ be the index such that $\Gamma_{z_k, z_{k+1}} \subset \partial \Omega_{i_k}$ (when $\Gamma_{z_k,z_{k+1}}$ is in both boundaries, we can pick either of the values $1$ or $2$ for $i_k$). Two cases may occur. If $\Gamma_{z_k, z_{k+1}} = [z_k, \, z_{k+1}]$, then, by convexity, $\Omega_{i_k} \subset H_{z_k, z_{k+1}}$ and the result follows: $f_k\in A^p(H_{z_k,z_{k+1}})\subset A^p(\Omega_{i_k})$. 
So, assume that $\Gamma_{z_k, z_{k+1}} \neq [z_k, \, z_{k+1}]$ and write 
$$
O_{z_k, \, z_{k+1}}:=\bigcap_{j\neq k}H_{z_j,z_{j+1}}\cap \Omega_{i_k} \quad \subset \quad \Omega_1 \cap \Omega_2
$$ 
(one side of the polygon has been replaced by the arc $\Gamma_{z_k,z_{k+1}}$).
Since $f_k\in A^p(H_{z_k,z_{k+1}})$ it is obvioulsy in $A^p(H_{z_k,z_{k+1}}\cap \Omega_{i_k})$. We claim that it extends to a function in $A^p(\Omega_{i_k})$. 
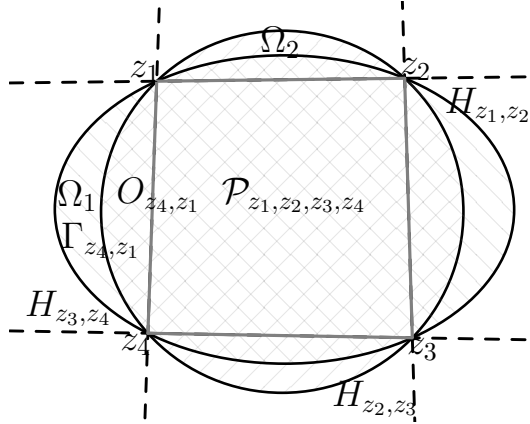
\begin{figure}[h]
\begin{center}
\begin{tikzpicture}[scale=0.6, line cap=round,line join=round,>=triangle 45,x=1.0cm,y=1.0cm]
\clip(-4.3,-3.02) rectangle (7.28,6.3);

\draw [line width=1.pt, line space=10pt, pattern=my north east lines, fill opacity =0.2] (1.74,1.64) circle (4.0144239935512545cm);
\draw [rotate around={-0.15197783856389324:(1.79,1.69)},line width=1.pt, line space=10pt, pattern=my north west lines, fill opacity = 0.2] (1.79,1.69) ellipse (5.089130133829856cm and 3.418515104406456cm);

\draw [line width=1.1pt,domain=-4.3:7.28, dash pattern=on 5pt off 5pt] plot(\x,{(-6.703381318860409-5.588763184581731*\x)/-0.19473437581419883});
\draw [line width=1.1pt,domain=-4.3:7.28, dash pattern=on 5pt off 5pt] plot(\x,{(--26.39968235467491-5.753876824021156*\x)/0.16993140901817227});
\draw [line width=1.1pt,domain=-4.3:7.28, dash pattern=on 5pt off 5pt] plot(\x,{(--24.979994797982414--0.06486131856383803*\x)/5.493748136354552});
\draw [line width=1.1pt,domain=-4.3:7.28, dash pattern=on 5pt off 5pt] plot(\x,{(-6.299124198416687-0.10025232087558678*\x)/5.858413921186923});

\draw [line width=1.2pt, color= gray] (-1.0414327914516701,4.534690247096806)-- (4.452315344902882,4.599551565660644);
\draw [line width=1.2pt, color= gray] (4.452315344902882,4.599551565660644)-- (4.622246753921054,-1.1543252583605128);
\draw [line width=1.2pt, color= gray] (4.622246753921054,-1.1543252583605128)-- (-1.236167167265869,-1.054072937484926);
\draw [line width=1.2pt, color= gray] (-1.236167167265869,-1.054072937484926)-- (-1.0414327914516701,4.534690247096806);

\begin{scriptsize}
\draw[color=black] (-1.3,4.75) node {\fontsize{14pt}{16pt}$z_1$};
\draw[color=black] (4.7,4.8) node {\fontsize{14pt}{16pt}$z_2$};
\draw[color=black] (4.85,-1.4) node {\fontsize{14pt}{16pt}$z_3$};
\draw[color=black] (-1.5,-1.3) node {\fontsize{14pt}{16pt}$z_4$};
\draw[color=black] (-2.8,2) node {\fontsize{14pt}{16pt}$\Omega_1$};
\draw[color=black] (1.7,5.4) node {\fontsize{14pt}{16pt}$\Omega_2$};
\draw[color=black] (2,2) node {\fontsize{14pt}{16pt}$\mathcal{P}_{z_1,z_2,z_3,z_4}$};
\draw[color=black] (-1,2) node {\fontsize{14pt}{16pt}$O_{z_4,z_1}$};
\draw[color=black] (-2.28,1) node {\fontsize{14pt}{16pt}$\Gamma_{z_4,z_1}$};
\draw[color=black] (3.8,-2.5) node {\fontsize{14pt}{16pt}$H_{z_2,z_3}$};
\draw[color=black] (-3,-0.5) node {\fontsize{14pt}{16pt}$H_{z_3,z_4}$};
\draw[color=black] (6.3,4) node {\fontsize{14pt}{16pt}$H_{z_1,z_2}$};
\end{scriptsize}
\end{tikzpicture}
\caption{\label{figconvex} Intersection of two convex open sets.}
\end{center}
\end{figure}
By definition  $O_{z_k, \, z_{k+1}} \subset H_{z_j, \, z_{j+1}}$, $j\neq k$. 
Therefore, we have 
$$f_k=\underbrace{f}_{\in A^p(\Omega_1 \cap \Omega_2)\subset A^p(O_{z_k, \, z_{k+1}})} - \quad \sum_{j \neq k} \underbrace{f_j}_{\in A^p(H_{z_j,z_{j+1}}) \subset A^p(O_{z_k, \, z_{k+1}})} \in A^p(O_{z_k, \, z_{k+1}}). $$
Thus $f_k \in A^p(H_{z_k,z_{k+1}}\cup O_{z_k,z_{k+1}})\subset A^p(\Omega_{i_k})$. 

Finally, assume that $n=2$. It is sufficient to add a point $z_3$ which belongs to $\partial \left(\Omega_1 \cap \Omega_2\right) \setminus \{z_1, \, z_2\}$, and construct $\mathcal{P}_{z_1, \, z_2, \, z_3}$ as before. The rest of the proof is dealt with as in the previous case. 
\end{proof}

It it worth mentioning that convex sets with infinitely many intersections can be constructed easily. 
\\

\section{Reachable states of the heat equation \label{sec3}}

As already discussed in the first section, the result \cite[Theorem 1.1]{Or}, which states that  
\begin{equation}
\label{decomp}
\mathrm{Ran}\Phi_\tau = A^2(\Delta) + A^2(\pi-\Delta)
\end{equation} 
together with our Corollary 
\ref{coro1} yield the final characterization of the reachable states of the 1-D heat equation with $L^2$-boundary controls
\begin{equation}\label{reachS3}
 \mathrm{Ran}\Phi_\tau=A^2(D),
\end{equation}
as stated in Corollary \ref{reach}.

In this section we would like to make some additional observations on this and related control problems. Also, the general arguments presented in Section \ref{sec2} leading to Theorem \ref{thm3} might hide the very simple ideas which are actually behind Corollary \ref{coro1} and thus leading to \eqref{reachS3}. For this reason, we would like to present here a more direct proof of Corollary \ref{coro1} based on Corollary \ref{corCousin} and Theorem \ref{thm1} (case $p=2$).

\subsection{A direct proof to Corollary \ref{reach}}

As already mentioned several times (see for instance Remark \ref{rmk1}), the decomposition \eqref{decomp} is invariant by rotation and dilation. So, writing $\Sigma_2:=(1+i)-\C^{++}$ and denoting by $D'$ the square $D':=\C^{++} \cap \Sigma_2$ (see Figure \ref{figsquare}), it is enough to show that $A^2(D')= A^2(\C^{++}) + A^2(\Sigma_2)$. Let $z_0 \in \C \setminus \overline{(\C^{++} \cup \Sigma_2)}$ and $P(z)=(z-z_0)^2$ 
which is bounded and non-vanishing on $\overline{D'}$, so that multiplication by $P$ is an isomorphism onto $A^p(D')$.
In particular $f \in A^2(D')$ if and only if $g=P f\in A^2(D')$. 
\\

\begin{figure}[h]
\begin{center}
\begin{tikzpicture}[scale=0.3]


\fill[line width=2.pt,line space=10pt, pattern=my north east lines,fill] (0,8) -- (8,8) -- (8,0) -- (0,0) -- cycle ;
\fill[line width=2.pt,line space=10pt, pattern=my north west lines,fill] (-5,3) -- (3,3) -- (3,-5) -- (-5,-5) -- cycle ;

\draw  [line width=2.pt](0,8)-- (0,0);
\draw  [line width=2.pt](0,0)-- (8,0);
\draw  [line width=2.pt](-5,3)-- (3,3);
\draw  [line width=2.pt](3,3)-- (3,-5);

\draw  [line width=1.pt,->](-5,0)-- (8.3,0);
\draw  [line width=1.pt,->](0,-5)-- (0,8.3);

\draw[color=black] (1.5,1.5) node[xscale=1, yscale=1] {$D'$};
\draw[color=black] (6.5,7) node[xscale=1, yscale=1] {$\C^{++}$};
\draw[color=black] (-4,-4) node[xscale=1, yscale=1] {$\Sigma_2$};
\end{tikzpicture}
\caption{\label{figsquare} The decomposition of $D'$ as the intersection of $\C^{++}$ and $\Sigma_2$.}
\end{center}
\end{figure}
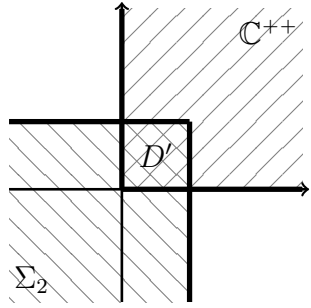

The proof decomposes into 3 steps. We have already met some arguments in the proof of Theorem \ref{thm3} when considering the case of a parallelogramme $\mathcal{P}$.
\\

\textbf{Step 1:} Let $S_1$ and $S_2$ be the half strips $S_1=\enstq{z=x+iy \in \C}{y >0, \ 0<x<1}$ and $S_2=(1+i)-S_1$. Note that $D'= S_1 \cap S_2$ (see Figure \ref{figure3}). Since $D'$ is bounded and $\dist(S_1 \setminus D', \, S_2 \setminus D') > 0$, by Corollary \ref{corCousin}, there exist $g_1 \in A^2(S_1)$ and $g_2 \in A^2(S_2)$ such that $g=g_1 + g_2$ on $D'$. This step is complete. 
\\


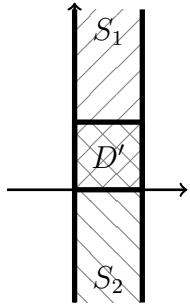
\begin{figure}[h]
\begin{center}
\begin{tikzpicture}[scale=0.3]

\fill[line width=2.pt,line space=10pt, pattern=my north east lines] (0,8) -- (3,8) -- (3,0) -- (0,0) -- cycle ;
\fill[line width=2.pt,line space=10pt, pattern=my north west lines, ] (0,-5) -- (0,3) -- (3,3) -- (3,-5) -- cycle ;

\draw  [line width=2.pt](0,0) -- (0,8);
\draw  [line width=2.pt](3,8) -- (3,0);
\draw  [line width=2.pt](3.1,0) -- (-0.1,0);

\draw  [line width=2.pt](0,-5) -- (0,3);
\draw  [line width=2.pt](0,3) -- (3,3);
\draw  [line width=2.pt](3,3) -- (3,-5);

\draw  [line width=1.pt,->](-3,0)-- (5,0);
\draw  [line width=1.pt,->](0,-5)-- (0,8.3);

\draw[color=black] (1.5,1.5) node[xscale=1, yscale=1] {$D'$};
\draw[color=black] (1.5,7) node[xscale=1, yscale=1] {$S_1$};
\draw[color=black] (1.5,-4) node[xscale=1, yscale=1] {$S_2$};
\end{tikzpicture}
\caption{\label{figure3} Step 1 --- Decomposition of $D'$ as the intersection of $S_1$ and $S_2$.}
\end{center}
\end{figure}

\textbf{Step 2:} Denote by $Q_{1,1}$ and $Q_{1,2}$ (respectively $Q_{2,1}=\Sigma_2$ and $Q_{2,2}$) the left and right quarter planes the intersection of which is $S_1$ (resp. $S_2$) (see Figure \ref{figure4}). We can repeat the same argument as in the previous step (applying Theorem \ref{thmCousin}) and obtain $g_1= g_{1,1} + g_{1,2}$ (resp. $g_2= g_{2,1} + g_{2,2}$) with $g_{1,i} \in A^2(Q_{1,i}, \omega_1)$ (resp. $g_{2,i} \in A^2(Q_{2,i}, \omega_1)$).   
\\

\begin{figure}[h]
\begin{center}
\begin{minipage}[c]{.5\linewidth}
\begin{tikzpicture}[scale=0.3]

\fill[line width=2.pt,line space=10pt, pattern=my north east lines] (0,8) -- (0,0) -- (8,0) -- (8,8) -- cycle ;
\fill[line width=2.pt,line space=10pt, pattern=my north west lines] (-5,0) -- (3,0) -- (3,8) -- (-5,8) -- cycle ;

\draw  [line width=1.pt](0,8)-- (0,0);
\draw  [line width=1.pt](0,0)-- (8,0);
\draw  [line width=1.pt](-5,0) -- (3,0);
\draw  [line width=1.pt](3,0) -- (3,8);

\draw  [line width=1.pt,->](-5,0)-- (8.3,0);
\draw  [line width=1.pt,->](0,-2)-- (0,8.3);

\draw[color=black] (-0.4,5) node[xscale=1, yscale=1] {$Q_{1,1}$};
\draw[color=black] (1.5,2) node[xscale=1, yscale=1] {$S_1$};
\draw[color=black] (3,5) node[xscale=1, yscale=1] {$Q_{1,2}$};
\end{tikzpicture}
\end{minipage}
\hfill
\begin{minipage}[c]{.4\linewidth}
\begin{tikzpicture}[scale=0.3]

\fill[line width=2.pt,line space=10pt, pattern=my north east lines] (0,-5) -- (0,3) -- (8,3) -- (8,-5) -- cycle ;
\fill[line width=2.pt,line space=10pt, pattern=my north west lines, ] (-5,3) -- (3,3) -- (3,-5) -- (-5,-5) -- cycle ;

\draw  [line width=1.pt](0,-5)-- (0,3);
\draw  [line width=1.pt](0,3)-- (8,3);
\draw  [line width=1.pt](-5,3) -- (3,3);
\draw  [line width=1.pt](3,3) -- (3,-5);

\draw  [line width=1.pt,->](-5,0)--(8.3,0) ;
\draw  [line width=1.pt,->](0,-5)--(0,5) ;

\draw[color=black] (-0.4,-2) node[xscale=1, yscale=1] {$Q_{2,1}$};
\draw[color=black] (1.5,0) node[xscale=1, yscale=1] {$S_2$};
\draw[color=black] (3,-2) node[xscale=1, yscale=1] {$Q_{2,2}$};
\end{tikzpicture}
\end{minipage}
\caption{\label{figure4} Step 2 --- The decompositions of $S_1$ and $S_2$.}
\end{center}
\end{figure}
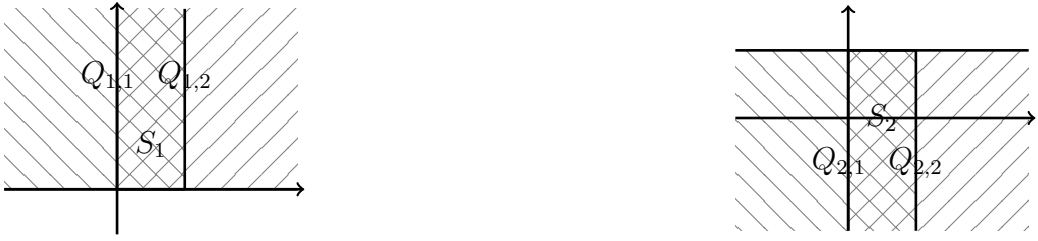

\textbf{Step 3:} Remark that $Q_{1,2}= \C^{++}$ and $Q_{2,1}=\Sigma_2$. So that we already have $g_{1,2}+g_{2,1} \in A^2(\C^{++}, \omega_1) + A^2(\Sigma_2, \omega_1)$, solving the problem for $g_{1,2}$ and $g_{2,1}$. Let us show the same for $g_{1,1}$ and $g_{2,2}$.  Denote by $H_{1,1,1}=\C^+$ the upper half plane by and $H_{1,1,2}=1-\C_+$  the left half plane translated by 1 (resp. $H_{2,2,1}$ and  the lower half plane translated by 1 and $H_{2,2,2}$ the right half plane) the intersection of which is $Q_{1,1}$ (resp. $Q_{2,2}$), see Figure \ref{figure5}. By Corollary \ref{weighted} with Remark \ref{rmk1}, $g_{1,1}$ belongs to 
$A^2(H_{1,1,1},\omega_1) + A^2(H_{1,1,2}, \omega_1)$ and $g_{2,2}$ belongs to 
$A^2(H_{2,2,1}, \omega_1) + A^2(H_{2,2,2}, \omega_1)$. 

Now, observing that $Q_{1,2}=\C^{++} \subset H_{1,1,1}$, $\C^{++}\subset H_{2,2,2}$, $\Sigma_2 \subset H_{1,1,2}$ and $\Sigma_2 \subset H_{2,2,1}$, we obtain that $g_{1,1}$ belongs to $A^2(\C^{++}, \omega_1)+A^2(\Sigma_2, \omega_1)$ and the same is true for $g_{2,2}$. Thus
$g\in A^2(\C^{++}, \omega_1) + A^2(\Sigma_2, \omega_1)$. Finally, by definition of $P$, $f=\frac{g}{P}$ belongs to $A^2(\C^{++}) + A^2(\Sigma_2)$, which concludes the proof. 
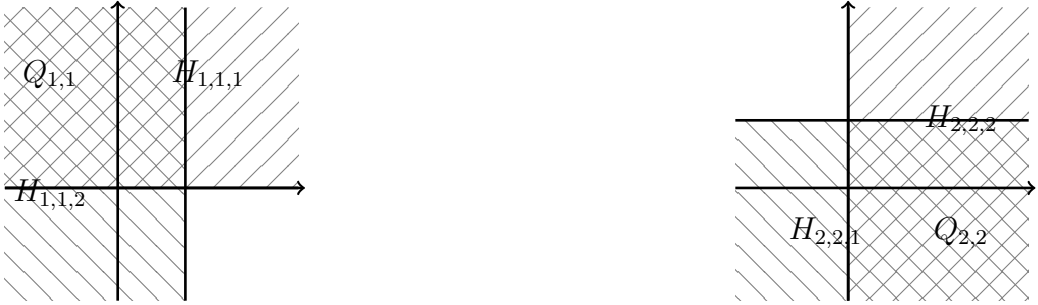
\begin{figure}[h]
\begin{center}
\begin{minipage}[c]{.5\linewidth}
\begin{tikzpicture}[scale=0.3]

\fill[line width=2.pt,line space=10pt, pattern=my north east lines] (-5,0) -- (8,0) -- (8,8) -- (-5,8) -- cycle ;
\fill[line width=2.pt,line space=10pt, pattern=my north west lines] (-5,-5) -- (3,-5) -- (3,8) -- (-5,8) -- cycle ;

\draw  [line width=1.pt](-5,0)-- (8,0);
\draw  [line width=1.pt](3,-5) -- (3,8);

\draw  [line width=1.pt,->](-5,0)-- (8.3,0);
\draw  [line width=1.pt,->](0,-5)-- (0,8.3);

\draw[color=black] (-3,5) node[xscale=1, yscale=1] {$Q_{1,1}$};
\draw[color=black] (-3,-0.3) node[xscale=1, yscale=1] {$H_{1,1,2}$};
\draw[color=black] (4,5) node[xscale=1, yscale=1] {$H_{1,1,1}$};
\end{tikzpicture}
\end{minipage}
\hfill
\begin{minipage}[c]{.4\linewidth}
\begin{tikzpicture}[scale=0.3]

\fill[line width=2.pt,line space=10pt, pattern=my north east lines] (0,-5) -- (0,8) -- (8,8) -- (8,-5) -- cycle ;
\fill[line width=2.pt,line space=10pt, pattern=my north west lines] (-5,3) -- (8,3) -- (8,-5) -- (-5,-5) -- cycle ;

\draw  [line width=1.pt](0,-5)-- (0,8);
\draw  [line width=1.pt](-5,3) -- (8,3);

\draw  [line width=1.pt,->](-5,0)--(8.3,0) ;
\draw  [line width=1.pt,->](0,-5)--(0,8.3) ;

\draw[color=black] (-1,-2) node[xscale=1, yscale=1] {$H_{2,2,1}$};
\draw[color=black] (5,3) node[xscale=1, yscale=1] {$H_{2,2,2}$};
\draw[color=black] (5,-2) node[xscale=1, yscale=1] {$Q_{2,2}$};
\end{tikzpicture}
\end{minipage}
\caption{\label{figure5} Step 3 ---  The decompositions of $Q_{1,1}$ and $Q_{2,2}$.}
\end{center}
\end{figure}
\qedsymbol


\subsection{Remarks on related control problems}

In this subsection we discuss some related control problems for the heat equation. 
The first one is mentionned in \cite{KNT} and the others in \cite{HKT}.

\subsubsection{Smooth boundary control}
In \cite{KNT}, Kellay, Normand and Tucsnak gave a characterization of the reachable space when the control is smooth. 
Let $s \in \N$ and denote by $ W^{s,2}_{\mathrm{L}}\left((0, \, \tau), \, \C^2\right)$ the Sobolev type space given by 
$$ W^{s,2}_{\mathrm{L}}\left((0, \, \tau), \, \C^2\right):= \enstq{v \in L^2\left((0, \, \tau), \, \C^2\right)}{\substack{\forall 1 \leq k \leq s,\  \frac{d^k v}{dt^k} \in L^2\left((0, \, \tau), \,  \C^2\right) \\
\text{and} \quad \forall 0 \leq k \leq s-1, \ \frac{d^k v}{dt^k}(0)=0}}.$$
Write also 
$$\left(A^2(\Delta) + A^2(\pi-\Delta)\right)^{(s)}:= \enstq{f \in A^2(\Delta) + A^2(\pi-\Delta)}{\forall 1 \leq k \leq s, \ f^{(2k)} \in A^2(\Delta) + A^2(\pi-\Delta)}$$
Then, combining their propositions 5.1 and 7.1, they proved that 
\begin{equation}
\label{reachSobolev}
\mathrm{Ran} \left({\Phi_\tau}_{| W^{s,2}_{\mathrm{L}}\left((0, \, \tau),\,  \C^2\right)}\right) = \left(A^2(\Delta) + A^2(\pi-\Delta)\right)^{(s)}.
\end{equation}
Similarly as above, we let $(A^2)^{(s)}(D)$ be the space of functions $f \in A^2(D)$ such that $f^{(2k)} \in A^2(D)$ for all $1\leq k \leq s$. 
Using Corollary \ref{reach}, result \eqref{reachSobolev} immediately leads to
\begin{cor}
We have 
$\mathrm{Ran} \left({\Phi_\tau}_{| W^{s,2}_{\mathrm{L}}\left((0, \, \tau), \, \C^2\right)}\right)= (A^2)^{(s)}(D)$.
\end{cor}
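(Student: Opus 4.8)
The plan is to deduce the statement by combining the smooth-control characterization \eqref{reachSobolev} of \cite{KNT} with the square identity \eqref{eq-square-decomp}. By \eqref{reachSobolev},
\[
\mathrm{Ran}\left({\Phi_\tau}_{| W^{s,2}_{\mathrm{L}}\left((0, \, \tau), \, \C^2\right)}\right)=\left(A^2(\Delta)+A^2(\pi-\Delta)\right)^{(s)},
\]
so it suffices to prove the equality of function spaces $\left(A^2(\Delta)+A^2(\pi-\Delta)\right)^{(s)}=(A^2)^{(s)}(D)$.

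First I would record that both of these are naturally subspaces of $\Hol(D)$. Indeed, up to the affine change of variable discussed before \eqref{eq-square-decomp} one has $D=\Delta\cap(\pi-\Delta)$, so every $f=f_1+f_2$ with $f_1\in A^2(\Delta)$ and $f_2\in A^2(\pi-\Delta)$ restricts to a holomorphic function on $D$, and so does each derivative $f^{(2k)}=f_1^{(2k)}+f_2^{(2k)}$. Hence $\left(A^2(\Delta)+A^2(\pi-\Delta)\right)^{(s)}$ and $(A^2)^{(s)}(D)$ can both be described as the set of $f\in\Hol(D)$ such that $f^{(2k)}$ lies in the relevant ambient space for every $0\le k\le s$.

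The key -- and essentially the only -- point is then that, by \eqref{eq-square-decomp}, these two ambient spaces are literally the same subset of $\Hol(D)$, namely $A^2(D)=A^2(\Delta)+A^2(\pi-\Delta)$. Consequently, for a fixed $f\in\Hol(D)$ and each $k$, the membership $f^{(2k)}\in A^2(D)$ is equivalent to $f^{(2k)}\in A^2(\Delta)+A^2(\pi-\Delta)$; intersecting these conditions over $0\le k\le s$ gives $(A^2)^{(s)}(D)=\left(A^2(\Delta)+A^2(\pi-\Delta)\right)^{(s)}$, and substituting into \eqref{reachSobolev} yields the corollary.

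I do not expect a genuine obstacle here: once \eqref{eq-square-decomp} (equivalently Corollary \ref{reach}) is in hand, the statement is a formal consequence of \eqref{reachSobolev}. The only mild point to check is that the rotation--dilation identifying $D$ with $\Delta\cap(\pi-\Delta)$ is affine, hence commutes with differentiation up to a nonzero multiplicative constant and with the $A^2$-norms up to a constant; it therefore transports the iterated-derivative (``$(s)$'') conditions verbatim, exactly as was already implicitly used in passing from Corollary \ref{coro1} to \eqref{eq-square-decomp}.
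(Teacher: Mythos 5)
Your proposal is correct and follows exactly the route the paper takes: it invokes the identity \eqref{reachSobolev} from \cite{KNT} and then uses the set equality $A^2(D)=A^2(\Delta)+A^2(\pi-\Delta)$ of \eqref{eq-square-decomp} (Corollary \ref{reach}) to identify $\left(A^2(\Delta)+A^2(\pi-\Delta)\right)^{(s)}$ with $(A^2)^{(s)}(D)$ term by term in the derivative conditions. The paper simply states that this ``immediately leads to'' the corollary, and your write-up supplies the same (routine) verification.
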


\subsubsection{Neumann conditions}
It is also possible to ask for a description of the reachable space for other types of boundary conditions. For Neumann boundary conditions, the result follows directly from Corollary \ref{reach} and a trick used in \cite{HKT}. We remind that we are searching the reachable space of the equation 
\begin{equation}
\label{HE-Neumann}
\left\lbrace
	\begin{aligned}
		&\frac{\partial y}{\partial t}(t,x)-		\frac{\partial^2 y}{\partial x^2}(t,x) = 0  \qquad &t >0, \ x\in (0,\pi),& \\
		&\frac{\partial y}{\partial x}(t,0)=u_0(t),\  \  \frac{\partial y}{\partial x}(t,\pi)=u_\pi(t) \qquad & t >0, &\\
		&y(0,x)= g(x) \qquad &x \in (0, \pi),
	\end{aligned}
\right.
\end{equation}
Again, for every initial condition $g \in L^2(0, \, \pi)$ and every control function $u=(u_0, \, u_\pi) \in L^2(\R_+, \C)$, the previous equation \eqref{HE-Neumann} admits a unique solution $y \in C\left([0, \, + \infty), L^2(0, \pi)\right)$. 
We denote by $\Phi^{NN}_\tau$ the controllability map associated to this equation. Let $\mathcal{D}(D)$ be the Dirichlet space on $D$, which consists of all holomorphic functions $F$ in $D$ such that $F' \in A^2(D)$. As noted in \cite[Prop. 5.2]{HKT}, $w$ is a solution of \eqref{HE-Neumann} if and only if $y=\frac{\partial w}{\partial x}$ is a solution of \eqref{HE} with initial condition $f= g'$. Thus the next result follows. 

\begin{cor} 
\label{reachN}
We have $\mathrm{Ran} \Phi^{NN}_\tau = \mathcal{D}(D)$.
\end{cor}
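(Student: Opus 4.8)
The plan is to reduce the Neumann boundary control problem to the Dirichlet one, which has already been solved in Corollary \ref{reach}. The key structural fact is the one quoted from \cite[Prop. 5.2]{HKT}: if $w$ solves the heat equation \eqref{HE-Neumann} with Neumann data $(u_0,u_\pi)$ and initial condition $g$, then $y := \partial w/\partial x$ solves \eqref{HE} with Dirichlet data $(u_0,u_\pi)$ and initial condition $f := g'$. Conversely, given a solution $y$ of \eqref{HE}, one recovers a solution of \eqref{HE-Neumann} by integrating in $x$ (fixing the constant of integration, e.g.\ by prescribing the value at a point, or more naturally by working at the level of the controllability maps where initial data are quotiented out). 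So I would first record this correspondence carefully as a bijection between solutions, noting that as $g$ ranges over $L^2(0,\pi)$ the derivative $f=g'$ ranges over $W^{-1,2}(0,\pi)=X$, and that the admissible controls $(u_0,u_\pi)\in L^2((0,\tau),\C^2)$ are the same on both sides.

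Next I would translate this into a statement about reachable spaces. By definition $\mathrm{Ran}\,\Phi^{NN}_\tau$ consists of the terminal states $w(\tau,\cdot)$, and $\mathrm{Ran}\,\Phi_\tau$ of the terminal states $y(\tau,\cdot)$; the correspondence $w\mapsto \partial w/\partial x$ maps one to the other. Concretely, $g\in \mathrm{Ran}\,\Phi^{NN}_\tau$ if and only if $g'\in \mathrm{Ran}\,\Phi_\tau = A^2(D)$ (using Corollary \ref{reach}). Now $\mathcal{D}(D)$ is by definition $\{F\in\Hol(D): F'\in A^2(D)\}$, so the condition $g'\in A^2(D)$ says exactly that the holomorphic extension of $g$ to the square $D$ lies in $\mathcal{D}(D)$. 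The one point to check is that every $F\in\mathcal{D}(D)$, i.e.\ every holomorphic function on $D$ with $F'\in A^2(D)$, actually arises in this way: since $F'\in A^2(D)=\mathrm{Ran}\,\Phi_\tau$, there is a control steering \eqref{HE} to $F'$, and integrating the corresponding solution in $x$ produces a solution of \eqref{HE-Neumann} whose terminal state extends to $F$ on $D$ (up to the additive constant, which is harmless because constants belong to $\mathcal{D}(D)$ and to $A^2(D)$, and $\mathrm{Ran}\,\Phi^{NN}_\tau$ is a linear space). This gives the two inclusions and hence $\mathrm{Ran}\,\Phi^{NN}_\tau=\mathcal{D}(D)$.

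The main obstacle, and the place where I would be most careful, is the bookkeeping at the endpoints of the correspondence: making sure that ``$y=\partial w/\partial x$'' and its inverse ``integrate in $x$'' are genuinely well defined as maps between the relevant function spaces (with $g\in L^2(0,\pi)$ on the Neumann side and $f\in W^{-1,2}(0,\pi)$ on the Dirichlet side), that the solution-operator formula \eqref{sol} transports correctly, and that the additive constant lost under differentiation is correctly accounted for so that the ranges really match rather than matching only modulo constants. Once the correspondence in \cite[Prop. 5.2]{HKT} is invoked at the level of solutions, the analytic content is entirely contained in Corollary \ref{reach} and in the definition of $\mathcal{D}(D)$, so the remainder is essentially formal. (The same scheme, with $A^2(D)$ replaced by $A^p(D)$ throughout and $\mathcal{D}$ by the corresponding Besov-type space, is what underlies Proposition \ref{Dirichlet}.)
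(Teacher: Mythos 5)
Your proposal is correct and follows essentially the same route as the paper: reduce the Neumann problem to the Dirichlet one via the correspondence $w\mapsto \partial w/\partial x$ from \cite[Prop. 5.2]{HKT}, then apply Corollary \ref{reach} together with the definition of $\mathcal{D}(D)$. The paper's own proof is just this observation stated in two sentences; your extra care about the additive constant and the endpoint function spaces is reasonable bookkeeping that the paper leaves implicit.
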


\subsubsection{Dirichlet condition at one end}
Now, we are looking for the reachable space for Dirichlet boundary condition at one end on the interval i.e.\ the reachable space of the equation 
\begin{equation}
\label{HE-Dir-one-end}
\left\lbrace
	\begin{aligned}
		&\frac{\partial y}{\partial t}(t,x)-		\frac{\partial^2 y}{\partial x^2}(t,x) = 0  \qquad &t >0, \ x\in (0,\pi),& \\
		& y(t,0)=0,\  \  y(t,\pi)=u_\pi(t) \qquad & t >0, &\\
		&y(0,x)= g(x) \qquad &x \in (0, \pi).
	\end{aligned}
\right.
\end{equation}
Note that if $y$ is a solution of \eqref{HE-Dir-one-end}, then its odd extension $\widetilde{y}$ to $[-\pi, \, \pi]$ is a solution of 
\begin{equation}
\left\lbrace
	\begin{aligned}
		&\frac{\partial \widetilde{y}}{\partial t}(t,x)- \frac{\partial^2 \widetilde{y}}{\partial x^2}(t,x) = 0  \qquad &t >0, \ x\in (-\pi,\pi),& \\
		& \widetilde{y}(t,-\pi)=-u_\pi(t),\  \  \widetilde{y}(t,\pi)=u_\pi(t) \qquad & t >0, &\\
		&\widetilde{y}(0,x)= g(x) \qquad &x \in (-\pi, \pi).
	\end{aligned}
\right.
\end{equation}
Denote by $D_2$ the square $D_2=\enstq{z=x+iy \in \C}{\abs{x}+\abs{y}< \pi}$. 
So, using Corollary \ref{reach} on $[-\pi, \, \pi]$ we obtain $\widetilde{y} \in A^2(D_2)$ and $\widetilde{y}(-z)=-\widetilde{y}(z), \forall z \in D_2$. Respectively, if $\widetilde{y} \in A^2(D_2)$ and $\forall z \in D_2, \ \widetilde{y}(-z)=-\widetilde{y}(z)$, then $\widetilde{y}_{|[0, \, \pi]}$ is a solution of \eqref{HE-Dir-one-end}. Thus, we have proved 
\begin{cor} 
\label{reach-Dir-one-end}
Denote by $\mathrm{Ran} \Phi^{0D}_\tau$ the reachable space of \eqref{HE-Dir-one-end}. \\
We have $\mathrm{Ran} \Phi^{0D}_\tau = \enstq{f \in A^2(D_2)}{\forall z \in D_2, \ f(-z)=-f(z)}:=A^2_{\mathrm{odd}}(D_2)$.
\end{cor}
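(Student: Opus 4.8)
The plan is to reduce to the two-ended Dirichlet control problem on the doubled interval $[-\pi,\pi]$ by odd reflection across $0$, and then apply Corollary~\ref{reach}. Write $R$ for the reflection $(Rh)(z):=-h(-z)$, which preserves the square $D_2$ and, on the real segment, is the odd-reflection operator; $A^2_{\mathrm{odd}}(D_2)$ is precisely the fixed-point set of $R$ acting on $A^2(D_2)$. We use throughout that, as for \eqref{HE}, the reachable space of \eqref{HE-Dir-one-end} is independent of the time $\tau$ and of the initial datum, so it suffices to treat reachability from $0$; note also that for $\tau>0$ the state $y(\tau,\cdot)$ is already (the restriction of) a holomorphic function, so odd extensions cause no distributional subtlety at $0$.

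\textbf{Reachable states are odd on $D_2$.} Let $g=y(\tau,\cdot)$ be reachable for \eqref{HE-Dir-one-end} with control $u_\pi$ (and zero initial datum). Because $\partial_t-\partial_x^2$ commutes with $x\mapsto -x$ and the boundary condition $y(t,0)=0$ forbids any source at the origin, the odd extension $\widetilde y(t,x):=-y(t,-x)$ ($x<0$) of $y$ to $(-\pi,\pi)$ solves the two-ended Dirichlet heat equation there with the antisymmetric boundary control $(-u_\pi,u_\pi)$. (Here one checks the elementary functional-analytic fact that the odd subspace of the state space for the two-ended problem on $(-\pi,\pi)$ is invariant under the Dirichlet semigroup and is canonically identified with the state space of \eqref{HE-Dir-one-end}.) By Corollary~\ref{reach} applied on $[-\pi,\pi]$, the final state $\widetilde g:=\widetilde y(\tau,\cdot)$ lies in $A^2(D_2)$. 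Since $\widetilde g$ is odd on the real segment $(-\pi,\pi)$, the holomorphic functions $\widetilde g$ and $R\widetilde g$ coincide on a subset of $D_2$ with an accumulation point, hence everywhere on $D_2$ by the identity theorem; thus $\widetilde g\in A^2_{\mathrm{odd}}(D_2)$.

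\textbf{Odd functions on $D_2$ are reachable.} Conversely, let $f\in A^2_{\mathrm{odd}}(D_2)$. By Corollary~\ref{reach} (applied on $[-\pi,\pi]$), $f$ is reachable from $0$ for the two-ended problem: there is a control $(a,b)\in L^2((0,\tau),\C^2)$ whose controlled-from-zero solution $Y$ satisfies $Y(\tau,\cdot)=f$. Reflecting the solution, $-Y(t,-x)$ solves the same equation with control $\sigma(a,b):=(-b,-a)$ and final state $Rf=f$; hence $(a,b)$ and $\sigma(a,b)$ both steer $0$ to $f$. By linearity so does $\tfrac12\big((a,b)+\sigma(a,b)\big)=\big(-c,\,c\big)$ with $c:=\tfrac12(b-a)$, and this control is $\sigma$-invariant. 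By uniqueness the corresponding solution $w$ is therefore odd in $x$ for every $t$, so $w(t,0)=0$ and $w|_{[0,\pi]}$ solves \eqref{HE-Dir-one-end} with control $u_\pi=c$ and zero initial datum, reaching $w(\tau,\cdot)|_{[0,\pi]}=f|_{[0,\pi]}$. Identifying a reachable state of \eqref{HE-Dir-one-end} with its odd extension, we conclude $\mathrm{Ran}\Phi^{0D}_\tau=A^2_{\mathrm{odd}}(D_2)$.

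The routine ingredients are Corollary~\ref{reach} itself and the identity-theorem argument propagating oddness from the interval to $D_2$. The main point requiring care is the symmetrization in the converse direction --- turning an arbitrary admissible control into an \emph{antisymmetric} one without changing the reached state --- together with the bookkeeping that makes odd reflection rigorous at the level of weak solutions: well-posedness of \eqref{HE-Dir-one-end}, time- and initial-datum-independence of its reachable space, and the identification of the odd part of the two-ended Dirichlet semigroup on $(-\pi,\pi)$ with the one-ended dynamics on $(0,\pi)$.
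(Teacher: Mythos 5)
Your proof is correct and follows essentially the same route as the paper: odd reflection across $0$ reduces \eqref{HE-Dir-one-end} to the two-ended problem on $[-\pi,\pi]$, to which Corollary~\ref{reach} applies, with the identity theorem propagating oddness from the interval to $D_2$. You spell out more carefully than the paper the symmetrization of the control in the converse direction (averaging $(a,b)$ with $(-b,-a)$ to get an antisymmetric control producing an odd solution), which is a worthwhile detail but not a different method.
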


\subsubsection{Neumann condition at one end}
Finally, let $\mathrm{Ran} \Phi^{0N}_\tau$ be the reachable space for Neumann boundary condition at one end, i.e the reachable space of \eqref{HE-Neumann} with $u_0=0$. Using an even extension and with the same kind of arguments as in the previous case, we obtain 
\begin{cor} 
\label{reach-N-one-end}
We have 
\begin{align*}
\mathrm{Ran} \Phi^{0N}_\tau &= \enstq{f \in \mathcal{D}(D_2)}{\forall z \in D_2, \ f(-z)=f(z)}:=\mathcal{D}_{\mathrm{even}}(D_2)\\
&=\enstq{f \in \mathrm{Hol}(D_2)}{f' \in A^2_{\mathrm{odd}}(D_2)}.
\end{align*}
\end{cor}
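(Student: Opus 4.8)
The plan is to mimic the proof of Corollary~\ref{reach-Dir-one-end}, replacing the odd extension by an even one and the use of Corollary~\ref{reach} by that of Corollary~\ref{reachN}. Write $D_2=\enstq{z=x+iy\in\C}{\abs{x}+\abs{y}<\pi}$ and note that $D_2$ is precisely the square whose diagonal is $[-\pi,\pi]$; hence, after rescaling, Corollary~\ref{reachN} says that $\mathcal{D}(D_2)$ is the reachable space of the heat equation on $(-\pi,\pi)$ with Neumann controls at both endpoints. The key structural observation is that the boundary condition $u_0=0$ in \eqref{HE-Neumann} is exactly what makes the even reflection across $x=0$ of a solution a solution again: if $w$ solves \eqref{HE-Neumann} on $(0,\pi)$ with $u_0=0$, then $\partial_x w(t,0)=0$, so $\widetilde w(t,x):=w(t,\abs{x})$ is, by parabolic reflection, a solution of the heat equation on $(-\pi,\pi)$ with Neumann data $\partial_x\widetilde w(t,\pi)=u_\pi(t)$ and $\partial_x\widetilde w(t,-\pi)=-u_\pi(t)$, i.e.\ a solution of the Neumann-controlled heat equation on $(-\pi,\pi)$.

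For the inclusion $\mathrm{Ran}\,\Phi^{0N}_\tau\subset\mathcal{D}_{\mathrm{even}}(D_2)$ I would then apply the rescaled Corollary~\ref{reachN} to $\widetilde w$: it gives $\widetilde w(\tau,\cdot)\in\mathcal{D}(D_2)$, and $\widetilde w(\tau,\cdot)$ is even on $(-\pi,\pi)$ by construction, hence by the identity theorem $\widetilde w(\tau,-z)=\widetilde w(\tau,z)$ on all of $D_2$; restricting to $(0,\pi)$ yields $w(\tau,\cdot)\in\mathcal{D}_{\mathrm{even}}(D_2)$. For the converse, given $f\in\mathcal{D}_{\mathrm{even}}(D_2)$, I would use Corollary~\ref{reachN} on $(-\pi,\pi)$ to write $f=W(\tau,\cdot)$ for some solution $W$ of the Neumann-controlled heat equation on $(-\pi,\pi)$; replacing $W$ by its even part $\tfrac12\big(W(t,x)+W(t,-x)\big)$ (still a solution by linearity, still reaching $f$ since $f$ is even, and now even in $x$ with antisymmetric controls) one gets $\partial_x(\cdot)(t,0)=0$, so the restriction to $(0,\pi)$ is a solution of \eqref{HE-Neumann} with $u_0=0$ reaching $f\big|_{(0,\pi)}$. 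Finally, the equality $\mathcal{D}_{\mathrm{even}}(D_2)=\enstq{f\in\mathrm{Hol}(D_2)}{f'\in A^2_{\mathrm{odd}}(D_2)}$ is elementary: if $f$ is even then $f'$ is odd and lies in $A^2(D_2)$; conversely if $f'\in A^2_{\mathrm{odd}}(D_2)$ then $\frac{d}{dz}\big(f(-z)-f(z)\big)=-f'(-z)-f'(z)=0$, so $f(-z)-f(z)$ is a constant on the connected set $D_2$, which vanishes at $z=0\in D_2$; hence $f$ is even and $f\in\mathcal{D}(D_2)$. (Equivalently, the same description can be reached by differentiating: $y=\partial_x w$ turns \eqref{HE-Neumann} with $u_0=0$ into \eqref{HE-Dir-one-end}, so Corollary~\ref{reach-Dir-one-end} applies, the constant of integration accounting for the passage from $A^2_{\mathrm{odd}}$ to $\mathcal{D}_{\mathrm{even}}$.)

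The only point requiring a little care — but no new idea beyond Corollary~\ref{reach-Dir-one-end} — is to make the reflection argument precise at the level of the underlying state spaces: that the even extension of a solution of \eqref{HE-Neumann} with $u_0=0$ is genuinely a solution of the Neumann-controlled equation on $(-\pi,\pi)$, and that the symmetrization $\tfrac12\big(W(t,x)+W(t,-x)\big)$ stays in the admissible solution class with an $L^2$ control. One should also keep in mind that, unlike the $A^2$-valued cases, constants belong to $\mathcal{D}_{\mathrm{even}}(D_2)$ and are trivially reachable (reached from themselves with zero control), so the statement is consistent with the one-dimensional ``antiderivative freedom'' inherent to Dirichlet-type spaces.
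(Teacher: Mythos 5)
Your argument is correct and is exactly the route the paper takes: the paper only remarks that the result follows ``using an even extension and with the same kind of arguments as in the previous case'' (i.e.\ Corollary~\ref{reach-Dir-one-end} with odd replaced by even reflection and Corollary~\ref{reach} replaced by Corollary~\ref{reachN}), which is precisely what you carry out, including the symmetrization of the control for the converse inclusion and the elementary identification $\mathcal{D}_{\mathrm{even}}(D_2)=\enstq{f\in\mathrm{Hol}(D_2)}{f'\in A^2_{\mathrm{odd}}(D_2)}$. Your write-up is in fact more detailed than the paper's.
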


\subsection*{Acknowledgements}
The authors would like to thank Joaquim Ortega-Cerd\`a for some helpful discussions.

\bibliographystyle{alpha} 
\bibliography{biblio} %
\end{document}